\newtheorem{thm}{Theorem}
\newtheorem{lem}[thm]{Lemma}
\newtheorem{rem}[thm]{Remark}
\newcommand{\var}{Var}   
\newcommand{\tr}{tr}         
        \newcommand{\E}{E}
\newcommand{\la}{\langle}
\renewcommand{\(}{\left(}
\renewcommand{\)}{\right)}
\newcommand{\lb}{\left[}
\newcommand{\rb}{\right]}
\newcommand{\lj}{\left|}
\newcommand{\rj}{\right|}
\newcommand{\De}{\Delta}
\renewcommand{\la}{\lambda}
\newcommand{\de}{\delta}
\newcommand{\al}{\alpha}
\newcommand{\ga}{\gamma}
\newcommand{\ep}{\epsilon}
\newcommand{\fLa}{{\pmb \Lambda}}
\newcommand{\fA}{{\bf A}}
\newcommand{\fM}{{\bf M}}
\newcommand{\fU}{{\bf U}}
\newcommand{\fW}{{\bf W}}
\newcommand{\fX}{{\bf X}}
\newcommand{\fe}{{\bf e}}
\newcommand{\fx}{{\bf x}}
\newcommand{\fw}{{\bf w}}
\newcommand{\fz}{{\bf z}}
\newcommand{\fy}{{\bf y}}
\newcommand{\cD}{{\mathcal D}}
\newcommand{\bC}{{\mathbb C}}
\def\indic{{\mathbf{I}}}
\def\to{\rightarrow}
\begin{document}
	\title{Convergence Rate of Eigenvector Empirical Spectral Distribution of Large Wigner Matrices
	}
	
	
	
	\author{Ningning Xia         \and
		Zhidong Bai
	}
	
	
	\institute{Ningning Xia \at School of Statistics and Management, Shanghai Key Laboratory of Financial Information Technology, Shanghai University of Finance and Economics, 777 Guo Ding Road, Shanghai 200433, China.\\
		The research was supported by NSFC 11501348, Shanghai Pujiang Program 15PJ1402300, IRTSHUFE and the state key program in the major research plan of NSFC 91546202.\\
		\email{xia.ningning@mail.shufe.edu.cn}           
		\and
		Z. D. Bai \at KLASMOE and School of Mathematics and Statistics, Northeast Normal
		University, 5268 People's Road, 130024 Changchun, China.\\
		The research was partially supported by National Natural Science Foundation of China (Grant No. 11571067).\\
		Tel.:(86) 431-8509-8161 \\
		Fax: (86) 431-8568-4987\\
		\email{baizd@nenu.edu.cn}           
	}
	
	\date{Received: date / Accepted: date}

	\maketitle
	
	\begin{abstract}
		In this paper, we adopt the eigenvector empirical spectral distribution (VESD) to investigate the limiting behavior of eigenvectors of a large dimensional Wigner matrix $\mathbf{W}_n$.
		In particular, we derive the optimal bound for the rate of convergence of the expected VESD of $\fW_n$ to the semicircle law, which is of order $O(n^{-1/2})$ under the assumption of having finite 10th moment. We further show that the convergence rates in probability and almost surely of the VESD are $O(n^{-1/4})$ and $O(n^{-1/6})$, respectively, under finite 8th moment condition. Numerical studies demonstrate that the convergence rate does not depend on the choice of unit vector involved in the VESD function, and the best possible bound for the rate of convergence of the VESD is of order $O(n^{-1/2})$. 
	\\
			{\bf Keywords:} Wigner matrices,  Eigenvectors, Empirical spectral distribution, Semicircular law,  Convergence rate\\
	\end{abstract}

\section{Introduction and main result}\label{introduction}
Let $\mathbf{W}_n=\dfrac{1}{\sqrt{n}}(X_{ij})_{i,j=1}^n$ be an $n\times n$ Hermitian matrix with diagonal entries being independent and identically distributed (i.i.d.) real random variables and above the diagonal entries being i.i.d. complex random variables.
The empirical spectral distribution (ESD) of $\mathbf{W}_n$ is defined by
\[
F^{W_n}(x):=\dfrac{1}{n}\sum_{i=1}^n \textrm{I}(\lambda_i\le x),
\]
where $\indic()$ is the indicator function. 
It is well-known that under  certain moment conditions, with probability one,
$F^{W_n}(x)$ converges to the semicircle law $F(x)$ whose probability density is given by
\[
F'(x)=\dfrac{1}{2\pi}\sqrt{4-x^2}, \qquad x\in [-2,2].
\]
The semicircle law is in fact the limiting spectral distribution of a very general class of matrices, which are frequently referred to as Wigner matrices in the literature.

Large dimensional random matrix theory has been rapidly developed ever since the semicircle law was established for the first time. 
The two basic classes of matrices, the Wigner matrices and sample covariance matrices, have held a basic position in large dimensional random matrix theory.  
The investigation of the spectrum of these two classes of matrices have been extensively developed in the literature (see, for example, \cite{Bai2010b,Bai2005,Li2014}). 
An interesting fact is that Wigner matrices and sample covariance matrices has a connection been set up by the semicircle law. 
When the ratio of the dimension and sample size goes to zero, the ESD of the normalized sample covariance matrices 
also converges to the semicircle law.  
Meanwhile, the semicircle law plays an important role in noncommutative probability and free independence. 
In the view of the value of research on the Wigner matrices to large dimensional random matrix theory, in this paper we consider the asymptotic properties of eigenvectors of large dimensional Wigner matrices. 

The earlier work about properties of eigenvectors may refer to \cite{Anderson1963}. When the observation dimension is fixed and low,
the author showed the asymptotic distribution for sample covariance eigenvectors when the observations are  multivariate normal distributed.  
For high dimensional case, Johnstone (2001) \cite{MR1863961} first proposed the spiked model to test the existence of principal component.
Further Paul (2007) \cite{MR2399865} investigated the direction of eigenvectors corresponding to spiked eignevalues and showed that the ordinary principal component analysis is necessarily inconsistent when the dimension to sample size ratio is a constant. 
The recent work regarding high dimensional principal components can be found in \cite{JL2009,PA2009,HS2015,GLZ2016}. 
The sparse properties of eigenvectors are also discussed in \cite{Zou2006,Shen2008,Croux2013,Ma2013}.
\cite{MR1003705,MR1062064,MR2330979,xia2013convergence,Bai2012a,Bao2013} characterized the asymptotical Haar property of the eigenmatrix (matrix of eigenvectors) based on the VESD function (see the definition \eqref{vesd} below).
The delocalization and universality property of eigenvectors can be found in \cite{ErdHos2009,ErdHos2012a,Knowles2013a,MR2930379}.

%

This article is concerned with the eigenvectors of $\mathbf{W}_n$.
Let $\mathbf{U}_n\fLa_n\mathbf{U}_n^*$ denote the spectral decomposition of $\mathbf{W}_n$, where $\fLa_n=\textrm{diag}(\lambda_1,\cdots,\lambda_n)$ with $\lambda_1\geq\cdots\geq\lambda_n$ and eigenmatrix $\mathbf{U}_n=(u_{ij})$ is a unitary matrix consisting of the corresponding orthonormal eigenvectors of $\mathbf{W}_n$.
Let $\mathbf{x}_n=(x_1,\cdots,x_n)^{\prime}\in\mathbb{C}^n$, $\|x_n\|=1$, be an arbitrary unit vector and $\mathbf{y}_n=(y_1,\cdots,y_n)^{\prime}=\mathbf{U}_n^*x_n$.
Motivated by the fact that the eigenmatrix $\fU_n$ of Gaussian real or complex Wigner matrices has Haar distribution if the signs of the first row of $\fU_n$ are i.i.d. symmetrically distributed, it is conjectured that the eigenmatrix $\fU_n$ is in a sense close to Haar distribution for general large Wigner matrices.
By adopting the idea of Silverstein, as demonstrated in \cite{MR1003705,MR1062064}, we define a stochastic process $Q_n(t)$ based on $\fU_n$  by
\[
Q_n(t):=\sqrt{\dfrac{n}{2}}\sum_{i=1}^{[nt]}\left(|y_i|^2-\dfrac{1}{n}\right),
\]
where $[a]$ denotes the integer not larger than $a$.
If $\mathbf{U}_n$ has a Haar measure on the topological group of $n$-dimensional orthogonal or unitary matrices, then $\fy_n$ has a uniform distribution over the unit sphere and  $Q_n(t)$ converges weakly to a Brownian bridge  by using Donsker's theorem.
Therefore, the convergence of process $Q_n(t)$ to a Brownian bridge characterizes the closeness of $\fU_n$ being Haar distributed.
For convenience, let us consider a transformation from $Q_n(t)$ to $Q_n(F^{\fW_n}(x))$ and define the eigenvector empirical spectral distribution (VESD) by
\begin{eqnarray}\label{vesd}
H^{W_n}(x):=\sum_{i=1}^n|y_i|^2\textrm{I}(\lambda_i\le x).
\end{eqnarray}
Then it follows that
\begin{eqnarray}
Q_n(F^{W_n}(x))=\sqrt{\dfrac{n}{2}}\left(H^{W_n}(x)-F^{W_n}(x)\right).
\label{f1f}
\end{eqnarray}
The recent result \cite{Bai2012a} proved that $H^{\fW_n}$ and $F^{\fW_n}$ have the same limiting distribution, the semicircle law, under finite 4th moment condition.
It is believed that the limiting theory of the two processes $Q_n(t)$ and $Q_n(F^{\fW_n})$ are equivalent. Thus the study of $Q_n(t)$ is converted to that of difference between the ESD $F^{\fW_n}$ and the VESD $H^{\fW_n}$.

The convergence rate of the ESD $F^{\fW_n}$ has been widely investigated in many literatures \cite{MR1217559,Bai1997,Bai2011,Gotze2003,Gotze2005}, but nothing has been done for the rate of the VESD $H^{\fW_n}$.
This article investigate three types of convergence rate of the VESD function $H^{\fW_n}(x)$.  We believe that good convergence rate of $H^{W_n}$ may help us investigate the behavior of process (\ref{f1f}), as reported in \cite{XB2015}. Besides, the convergence rate of the VESD has its own interest and importance in applications.  
It is well known that the optimal bound for the rate of convergence of the expected ESD to the semicircle law is of order $O(n^{-1})$, as shown in \cite{GT2016}. The best possible bound in the Gaussian case for the rate of convergence in probability is $O(n^{-1}\sqrt{\log n})$ (see \cite{Gotze2005}).  
Even though $F^{\fW_n}$ and $H^{\fW_n}$ both converge to semicircle law, a substantial difference exists between them.
Bai and Yao \cite{Bai2005} proved that
\[
n\int g(x)d\(F^{\fW_n}(x)-F(x)\)
\]
converges to a Gaussian distribution for an analytic function $g(x)$.
While Bai and Pan \cite{Bai2012a} showed that the limit of
\[
\sqrt{n} \int g(x)d\(H^{\fW_n}(x)-F(x)\)
\]
is a Gaussian distribution. 
Compared with the above two expressions, it is conjectured that the best possible bound for the rate of convergence of the VESD to the semicircle law is of order $O(n^{-1/2})$. 
The simulation studies in section \ref{simulation} further convince us that the optimal bound for $\Delta^H$ as defined in \eqref{bound_EVESD} is $O(n^{-1/2})$. 
This is a substantial difference between the ESD and the VESD of large dimensional Wigner matrices.

The Kolmogorov distances between two distribution functions are defined as
\begin{eqnarray}\label{bound_EVESD}
\De^H:=\|\E H^{\fW_n}-F\|=\sup_x|\E H^{\fW_n}(x)-F(x)|,
\end{eqnarray}
and
\[
\De_p^H:=\|H^{\fW_n}-F\|=\sup_x|H^{\fW_n}(x)-F(x)|,
\]
where $F(x)$ denotes semicircle law.
Notice that $\De_p^H$ provides bounds for the rate of convergence in probability.

The main theorems are established as follows,
\begin{thm}\label{thm1}
	Assume that $\{X_{ij},i>j=1,2,\cdots,n\}$ are i.i.d. complex random variables with $\textrm{E}X_{12}=0$, $\textrm{E}|X_{12}|^2=1$ and $\textrm{E}|X_{12}|^{10}<\infty$, that $\{X_{ii},i=1,\cdots,n\}$ are i.i.d. real random variables with $\textrm{E}X_{11}=0$ and $\textrm{E}|X_{11}|^2=1$. Further assume that all these random variables are independent.  Then,
	\[
	\|\textrm{E}H^{W_n}-F\|=O(n^{-1/2}),
	\]
	where $F$ is the distribution function of the semicircular law.
\end{thm}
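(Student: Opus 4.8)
The plan is to run a Stieltjes--transform argument comparing $\E H^{\fW_n}$ with $F$. Write $\fR(z)=(\fW_n-z\fI)^{-1}$, $w_{ij}=X_{ij}/\sqrt n$, and let $s(z)=\int(x-z)^{-1}dF(x)$ be the Stieltjes transform of the semicircle law, which satisfies $s(z)^2+zs(z)+1=0$, $\im s(z)>0$ and $|s(z)|\le 1$ for $\im z>0$. Since $\fy_n=\fU_n^*\fx_n$ and $\fW_n=\fU_n\fLa_n\fU_n^*$, the Stieltjes transform of the VESD is a diagonal quadratic form of the resolvent,
\[
\int\frac{dH^{\fW_n}(x)}{x-z}=\sum_{i=1}^n\frac{|y_i|^2}{\la_i-z}=\fx_n^*\fR(z)\fx_n .
\]
First I would carry out the usual truncation and renormalization of the entries at scale $\eta_n\sqrt n$ with $\eta_n\downarrow 0$; because $\E|X_{12}|^{10}<\infty$, the induced perturbation of $\E H^{\fW_n}$ in Kolmogorov distance is of smaller order than $n^{-1/2}$, so afterwards the entries may be assumed bounded by $\eta_n\sqrt n$. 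Next I would apply a Berry--Esseen/smoothing inequality of Bai's type to $\E H^{\fW_n}$ and $F$: with $z=u+iv$ and $v=c\,n^{-1/2}$,
\[
\De^H\le C\Big(\int_{-A}^{A}\big|\E\fx_n^*\fR(z)\fx_n-s(z)\big|\,du+v+\tfrac1v\!\int_{|x|>B}\!|\E H^{\fW_n}(x)-F(x)|\,dx+\tfrac1v\sup_x\!\int_{|t|\le 2va}\!|F(x+t)-F(x)|\,dt\Big).
\]
The semicircle density is bounded, so the last term is $O(v)=O(n^{-1/2})$; since $\|\fy_n\|=1$ and, after truncation, $\la_{\max}(\fW_n)\le 2+o(1)$ outside an event of probability $o(n^{-\ell})$ for every $\ell$, the tail term is $o(n^{-1/2})$. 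Thus everything reduces to showing $\int_{-A}^{A}\big|\E\fx_n^*\fR(z)\fx_n-s(z)\big|\,du=O(n^{-1/2})$ at $v=c\,n^{-1/2}$.

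The key structural observation is that the model is invariant under simultaneous permutations of rows and columns, so $\E\fR(z)$ commutes with every permutation matrix and therefore $\E\fR(z)=c(z)\fI+b(z)\fJ$, with $\fJ$ the all-ones matrix, $c(z)+b(z)=\E\fR_{11}(z)=\tfrac1n\E\tr\fR(z)=\E s_n(z)$ and $b(z)=\E\fR_{12}(z)$. Hence
\[
\E\,\fx_n^*\fR(z)\fx_n=c(z)\|\fx_n\|^2+b(z)\Big|\sum_{j=1}^n x_j\Big|^2=\E s_n(z)+\Big(\Big|\sum_{j=1}^n x_j\Big|^2-1\Big)\,\E\fR_{12}(z),
\]
where $0\le|\sum_j x_j|^2-1\le n-1$ for a unit vector. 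The quantity $\E s_n(z)-s(z)$ is handled by the known convergence rate of the expected ESD of Wigner matrices: $\int_{-A}^A|\E s_n(u+iv)-s(u+iv)|\,du=O(n^{-1/2})$ (in fact $o(n^{-1/2})$ under the present moment assumptions) for $v=c\,n^{-1/2}$. Therefore the whole theorem collapses to the single estimate
\[
\sup_{|u|\le A}\big|\E\fR_{12}(u+i\,cn^{-1/2})\big|=O(n^{-3/2}),
\]
after which $(n-1)\int_{-A}^A|\E\fR_{12}(z)|\,du=O(n^{-1/2})$ closes the argument.

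The bound on the off-diagonal resolvent entry is the heart of the matter and the step I expect to be the main obstacle. I would obtain it from a leave-one-out (Schur-complement) expansion. Deleting the first row and column gives the minor $\fW_1$ with resolvent $\fG=(\fW_1-z)^{-1}$; writing $\beta_1=\fR_{11}$ and $\fw_1=(w_{21},\dots,w_{n1})'$, one has $\fR_{12}=-\beta_1\sum_{j\ge 2}\bar w_{j1}\fG_{j2}$. Conditioning on $\fW_1$ and expanding $\beta_1=\ul\beta_1+\ul\beta_1^2(\xi-w_{11})+\ul\beta_1^2\beta_1(\xi-w_{11})^2$ with $\ul\beta_1=(-z-\tfrac1n\tr\fG)^{-1}$ and $\xi=\fw_1^*\fG\fw_1-\tfrac1n\tr\fG$ a centred quadratic form, the zeroth- and first-order contributions vanish ($\E_{\fw_1}$ of a $\fw_1$-linear form is $0$, and $w_{11}$ is independent of $\fG$ and mean zero), and the first surviving term is proportional to $\E[|X_{12}|^2X_{12}]\,n^{-3/2}\,\E\big[\ul\beta_1^2\sum_j\fG_{jj}\fG_{j2}\big]$. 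This is $O(n^{-3/2})$ because $\ul\beta_1\to s(z)$ with $|s(z)|\le 1$, $\fG_{jj}$ is bounded on the (overwhelmingly likely) event where the local law holds, and $\E[\sum_j\fG_{jj}\fG_{j2}]$ stays bounded uniformly in $n$: indeed $\E[\sum_j\fG_{j2}]=\E\fG_{22}+(n-2)\E\fG_{23}$ and, by the same estimate one dimension down, $\E\fG_{23}=O(n^{-3/2})$, so a short induction on $n$ (which must be set up from the start) keeps all these quantities $O(1)$. The genuinely delicate part is to show that every remaining term of the expansion — those carrying $\xi$, $\xi^2$, $w_{11}^2$, the higher cumulants of the entries, and the correlation between $\beta_1$ and $\sum_j\bar w_{j1}\fG_{j2}$ — is also $O(n^{-3/2})$ \emph{uniformly down to} $v\asymp n^{-1/2}$; this needs $p$-th moment bounds (for a fixed finite $p$) on the relevant quadratic and bilinear forms in $\fG$, which is precisely what $\E|X_{12}|^{10}<\infty$ supplies, and it is where one has to beat the crude factor $\|\fG\|\le v^{-1}=\sqrt n$. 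Assembling these estimates, together with the induction on the dimension, is the technical core; the other ingredients (truncation, the smoothing inequality, and the ESD input) are standard.
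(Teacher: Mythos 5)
Your outer shell (truncation, Bai's smoothing inequality at $v\asymp n^{-1/2}$, the tail estimate outside $[-2,2]$, and the reduction to $\int_{-A}^A|\E\fx_n^*\fR(z)\fx_n-s(z)|\,du$) is the same as the paper's, but the core of your argument has a genuine gap, on two levels. First, the structural identity $\E\fR(z)=c(z)\fI+b(z)\fJ$ is not justified under the stated hypotheses: conjugating $\fW_n$ by a permutation matrix sends some upper-triangular entries $X_{ij}$ to $\bar X_{ij}$, and Theorem \ref{thm1} assumes neither $X_{12}\eqd\bar X_{12}$ nor anything about $\E X_{12}^2$, so the law of $\fW_n$ is not permutation-invariant and $\E\fR_{ij}(z)$ need not be constant over $i\neq j$ (its leading term is proportional to $\E[|X_{12}|^2X_{12}]$, which is phase-sensitive). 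This could be repaired by demanding a uniform entrywise bound $\sup_{i\ne j}|\E\fR_{ij}|=O(n^{-3/2})$ instead, but that only shifts all of the weight onto the second, more serious problem.

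The second problem is that the estimate $\sup_{|u|\le A}|\E\fR_{12}(u+icn^{-1/2})|=O(n^{-3/2})$, to which you reduce the whole theorem, is never proved, and it is at least as strong as the theorem itself (even granting the paper's result one only gets the \emph{averaged} statement $n^{-1}|\sum_{i\ne j}\E\fR_{ij}|=O(n^{-1/2})$ for the particular vector $\fx_n=\mathbf{1}/\sqrt n$). Your sketch does not close it at the critical scale $v\asymp n^{-1/2}$, where $nv^2\asymp 1$: the claim that $\E\bigl[\sum_j\fG_{jj}\fG_{j2}\bigr]$ stays $O(1)$ "by the local law and a short induction" controls only the product-of-expectations part $\sum_j\E\fG_{jj}\,\E\fG_{j2}$, whereas the covariance part $\sum_j\mathrm{Cov}(\fG_{jj},\fG_{j2})$ consists of $n$ terms each of size up to $O((nv)^{-1})=O(n^{-1/2})$ by Cauchy--Schwarz, i.e.\ a naive total of order $\sqrt n$, and no cancellation mechanism is supplied; moreover the entrywise/isotropic local law down to $v\asymp n^{-1/2}$ that you invoke is itself a nontrivial input not provided by your toolkit, and the higher-order terms in $\xi$ and the expansion remainder (where only $\|\fG\|\le v^{-1}=\sqrt n$ is available) are exactly where the $n^{-3/2}$ precision is lost — you flag this yourself as the "main obstacle" but do not resolve it. The paper avoids needing any such entrywise bound: it expands $z\,\E\fx_n^*\fA^{-1}(z)\fx_n+1=\sum_k\E\,\fx_n^*\fw_k\fe_k^*\fA^{-1}(z)\fx_n$, keeps the weights $x_k$ inside the expectations (using only $\sum_k|x_k|\le\sqrt n$ and quadratic-form/martingale lemmas), and closes the self-consistent inequality $\De^H\le \frac{C}{nv}(1+\De^H/v)+Cv$, which yields $O(n^{-1/2})$ without ever isolating $\E\fR_{12}$. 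As it stands, your proposal replaces the theorem by an unproved and possibly stronger claim, so the proof is incomplete.
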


\begin{thm}\label{thm2}
	Under the assumptions in theorem \ref{thm1} except that we now only require $\textrm{E}|X_{12}|^8<\infty$, $i,j=1\cdots,n$, we have
	\[
	\|H^{W_n}-F\|=O_p(n^{-1/4}).
	\]
\end{thm}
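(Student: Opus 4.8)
The plan is to control $\|H^{W_n}-F\|$ through the Stieltjes transforms $s_H(z)=\int(x-z)^{-1}dH^{W_n}(x)=\mathbf{x}_n^*(\mathbf{W}_n-zI)^{-1}\mathbf{x}_n$ and $s(z)=\int(x-z)^{-1}dF(x)$, by means of a Berry--Esseen-type smoothing inequality (see, e.g., \cite{Bai1997}): for a fixed $A>2$ and any $v=v_n\in(0,1)$,
\[
\|H^{W_n}-F\|\le C\left(\int_{-A}^{A}\big|s_H(x+iv)-s(x+iv)\big|\,dx+v+\text{(tail terms)}\right),
\]
where the $O(v)$ term comes from the boundedness of $F'$ and the tail terms are negligible since $H^{W_n}$ concentrates on $[-2,2]$. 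Beforehand I would carry out the usual truncation and centralization of the entries, exactly as in the proof of Theorem~\ref{thm1} (and treating the diagonal entries, which carry only a second moment, as there); with $\mathrm{E}|X_{12}|^{8}<\infty$ one may truncate the off-diagonal entries at $\varepsilon_n\sqrt n$ with $\varepsilon_n\to0$ slowly, at a cost of $o_p(n^{-1/4})$ in $H^{W_n}$.

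Splitting $s_H-s=(s_H-\mathrm{E}\,s_H)+(\mathrm{E}\,s_H-s)$, the deterministic part is handled by the estimates underlying Theorem~\ref{thm1}, which remain valid --- up to the order $n^{-1/4}$ needed here --- under the weaker eighth-moment assumption, so that $\int_{-A}^{A}|\mathrm{E}\,s_H(x+iv)-s(x+iv)|\,dx=O(n^{-1/2})$. For the random part, by the Cauchy--Schwarz inequality it suffices to prove the variance bound
\[
\mathrm{Var}\big(s_H(z)\big)\le\frac{C}{nv^{2}},\qquad z=x+iv,\ \ x\in[-A,A],\ \ v\ge v_n,
\]
uniformly over unit vectors $\mathbf{x}_n$; granting it, $\mathrm{E}\int_{-A}^{A}|s_H(x+iv)-\mathrm{E}\,s_H(x+iv)|\,dx\le 2A(Cn^{-1}v^{-2})^{1/2}$, hence $\mathrm{E}\|H^{W_n}-F\|\le C((\sqrt n\,v)^{-1}+v+n^{-1/2})$. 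Choosing $v=n^{-1/4}$ balances the first two terms and yields $\mathrm{E}\|H^{W_n}-F\|=O(n^{-1/4})$, whence $\|H^{W_n}-F\|=O_p(n^{-1/4})$ by Markov's inequality.

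The variance bound I would obtain from a martingale-difference decomposition along the columns of $\mathbf{W}_n$. With $\mathrm{E}_k$ the conditional expectation given the first $k$ columns and $\mathbf{G}^{(k)}=(\mathbf{W}_n^{(k)}-zI)^{-1}$ the resolvent with row and column $k$ removed, one has, since $\mathbf{G}^{(k)}$ is independent of column $k$,
\[
s_H-\mathrm{E}\,s_H=\sum_{k=1}^{n}(\mathrm{E}_k-\mathrm{E}_{k-1})s_H=\sum_{k=1}^{n}(\mathrm{E}_k-\mathrm{E}_{k-1})\big(s_H-\mathbf{x}_n^{(k)*}\mathbf{G}^{(k)}\mathbf{x}_n^{(k)}\big),
\]
and the Schur-complement identity gives $s_H-\mathbf{x}_n^{(k)*}\mathbf{G}^{(k)}\mathbf{x}_n^{(k)}=\beta_k(b_k-\bar x_k)(a_k-x_k)$, where $x_k$ is the $k$-th coordinate of $\mathbf{x}_n$, $\beta_k=\big((\mathbf{W}_n-zI)^{-1}\big)_{kk}$, $a_k=\mathbf{r}_k^*\mathbf{G}^{(k)}\mathbf{x}_n^{(k)}$, $b_k=\mathbf{x}_n^{(k)*}\mathbf{G}^{(k)}\mathbf{r}_k$, and $\mathbf{r}_k$ is the off-diagonal part of column $k$. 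Conditionally on $\mathbf{G}^{(k)}$ the scalars $a_k,b_k$ have mean zero and second moment $n^{-1}\|\mathbf{G}^{(k)}\mathbf{x}_n^{(k)}\|^{2}=n^{-1}v^{-1}\mathrm{Im}\big(\mathbf{x}_n^{(k)*}\mathbf{G}^{(k)}\mathbf{x}_n^{(k)}\big)$, with expectation $O(v^{-1})$; also $|\beta_k|\le v^{-1}$ always and $|\beta_k|\le C$ off an event of small probability (a quantitative concentration inequality for $\mathbf{r}_k^*\mathbf{G}^{(k)}\mathbf{r}_k$, where the truncated eighth moment enters). Expanding $(b_k-\bar x_k)(a_k-x_k)$, the pieces $\beta_k|x_k|^2$, $\beta_kx_ka_k$, $\beta_kx_kb_k$ and $\beta_ka_kb_k$ contribute, respectively, $O(n^{-1}v^{-2}|x_k|^4)$, $O(n^{-1}v^{-1}|x_k|^2)$, $O(n^{-1}v^{-1}|x_k|^2)$ and $O(n^{-2}v^{-2})$ to $\mathrm{E}|(\mathrm{E}_k-\mathrm{E}_{k-1})s_H|^{2}$ (the last using also $\mathrm{E}\|\mathbf{G}^{(k)}\mathbf{x}_n^{(k)}\|^{4}=O(v^{-2})$), and summing over $k$ with $\sum_k|x_k|^2=1$ gives $\mathrm{Var}(s_H(z))=\sum_k\mathrm{E}|(\mathrm{E}_k-\mathrm{E}_{k-1})s_H|^{2}\le Cn^{-1}v^{-2}$.

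The main obstacle is exactly this variance bound with the sharp power $v^{-2}$: the crude worst-case estimates $|\beta_k|\le v^{-1}$ and $\|\mathbf{G}^{(k)}\mathbf{x}_n^{(k)}\|^{2}\le v^{-2}$ only give $v^{-3}$ or $v^{-4}$, which would degrade the final rate to $n^{-1/5}$ or $n^{-1/6}$. Two points require care: the estimate $\mathrm{E}\|\mathbf{G}^{(k)}\mathbf{x}_n^{(k)}\|^{4}=O(v^{-2})$ needed for the ``doubly small'' term $\sum_k\mathrm{E}|\beta_ka_kb_k|^{2}$ does not follow from Cauchy--Schwarz and appears to require bootstrapping the variance bound itself (an induction on the dimension, or a separate a priori estimate), together with delocalization-type control of $\sum_i|(\mathbf{G}^{(k)}\mathbf{x}_n^{(k)})_i|^{4}$; and one must show that the rare event on which $|\beta_k|$ is of order $v^{-1}$ contributes negligibly to the martingale sum, which is precisely where the truncation and the eighth-moment hypothesis are genuinely used. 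Once the variance bound is in hand, the optimization over $v$ above completes the proof.
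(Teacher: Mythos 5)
Your overall route coincides with the paper's: a Berry--Esseen smoothing inequality (the paper's Lemma \ref{Bai_lem}), the bound $\Delta^H=\|\E H^{W_n}-F\|=O(n^{-1/2})$ carried over from Theorem \ref{thm1} (which indeed survives under only eighth moments, since the tenth moment there is spent only on the truncation step at the $o(n^{-1/2})$ level in expectation), a column-wise martingale-difference decomposition of $s_n^H(z)-\E s_n^H(z)$, and the choice $v\asymp n^{-1/4}$. The paper's proof of Theorem \ref{thm2} is exactly these few lines once its Lemma \ref{mem} is in hand: Lemma \ref{mem} with $l=1$ gives $\E|s_n^H-\E s_n^H|^2\le \frac{C}{nv^2}\left(1+\Delta^H/v\right)^2+\frac{C}{n^{5/2}v}\left(1+\Delta^H/v\right)$, which is precisely the sharp variance bound $C/(nv^2)$ your argument requires.

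The genuine gap is that you do not prove this variance bound, and you say so yourself: the two points you leave open --- the estimate $\E\|\fA_k^{-1}(z)\fx_n\|^4=O(v^{-2})$ needed for the doubly small term $\beta_k a_k b_k$, and the control of $\beta_k$ (the paper's $\alpha_k$) outside a rare event --- are exactly where the paper does its real work; with only the crude bounds you would get $v^{-3}$ or $v^{-4}$ and hence rates $n^{-1/5}$ or $n^{-1/6}$, not the claimed $n^{-1/4}$. The paper closes the first point by expressing every resolvent-vector norm through the imaginary part, $\|\fA_k^{-1}(z)\fx_n\|^2=v^{-1}\Im\big(\fx_n^*\fA_k^{-1}(z)\fx_n\big)$, and then bootstrapping: in the proof of Lemma \ref{mem} one uses $\E\big(\Im s_n^H(z)\big)^{2l}\le C\,\E|s_n^H-\E s_n^H|^{2l}+C\big(1+\Delta^H/v\big)^{2l}$, absorbs the first term for $v\ge C_0 n^{-1/2}$ with $C_0$ large, and inducts on $l$ --- the very bootstrap/induction you guess at but do not carry out. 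The second point is the paper's Lemma \ref{agb}, which shows $P(|z^{-1}\alpha_k|>C)=o(n^{-t})$ for every $t$ via the quadratic-form inequality of Lemma \ref{6.8}; this step leans on the truncation level $\epsilon_n n^{1/4}$, so your proposed truncation at $\epsilon_n\sqrt n$ would have to be lowered for the same scheme to go through. In short: right strategy, correct identification of the crux, but the crux itself --- the $1/(nv^2)$ bound of Lemma \ref{mem} together with the high-probability boundedness of $\alpha_k$ --- is left unproved, and without it the stated rate $O_p(n^{-1/4})$ does not follow.
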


\begin{thm}\label{thm3}
	Under the assumptions in theorem \ref{thm2},  we have
	\[
	\|H^{W_n}-F\|=O_{a.s.}(n^{-1/6}).
	\]
\end{thm}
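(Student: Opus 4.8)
The plan is to deduce the almost sure bound from a \emph{summable} large deviation estimate and Borel--Cantelli, essentially by re-running the moment estimates behind Theorem \ref{thm2} at a higher moment order. First, by the triangle inequality $\|H^{W_n}-F\|\le\|H^{W_n}-\E H^{W_n}\|+\|\E H^{W_n}-F\|$, and the second term is $O(n^{-1/2})=o(n^{-1/6})$ by Theorem \ref{thm1}; so it suffices to bound $\|H^{W_n}-\E H^{W_n}\|$. Since only $8$ moments are available, I would truncate and recentre the entries at level $\eta_n\sqrt n$ with, say, $\eta_n=n^{-1/16}$: the probability that truncation alters $\fW_n$ is $O(n^{-3/2})$ by the $8$th-moment Markov bound (hence summable in $n$), and the deterministic perturbation of the VESD produced by recentring/rescaling is $o(n^{-1/6})$, so we may assume $|X_{ij}|\le\eta_n\sqrt n$. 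Also, under a finite fourth moment $\|\fW_n\|\le 2+o(1)$ almost surely, so eventually $H^{W_n}$ and $F$ agree outside a fixed compact interval and the contribution of $|x|>A$ is negligible.

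The second ingredient is the Stieltjes transform. Write $m_n(z)=\fx_n^*(\fW_n-zI)^{-1}\fx_n=\int(x-z)^{-1}dH^{W_n}(x)$ and let $m_F$ be the Stieltjes transform of $F$. Bai's Esseen-type inequality bounds the Kolmogorov distance: for any $v=v_n>0$,
\[
\|H^{W_n}-F\|\ \lesssim\ \int_{-A}^{A}\bigl|m_n(u+iv)-m_F(u+iv)\bigr|\,du\ +\ v\ +\ (\text{negligible tails}).
\]
Splitting $m_n-m_F=(m_n-\E m_n)+(\E m_n-m_F)$ and integrating by parts, the deterministic part contributes $O(v^{-1}\|\E H^{W_n}-F\|)=O(n^{-1/2}/v_n)$, which will be $o(n^{-1/6})$ for the choice of $v_n$ below. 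So the task reduces to moment bounds on $\int_{-A}^{A}|m_n(u+iv_n)-\E m_n(u+iv_n)|\,du$.

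For those, fix $z=u+iv$, $v=v_n$, and decompose $m_n(z)-\E m_n(z)=\sum_{k=1}^{n}\gamma_k$ with $\gamma_k=(\E_k-\E_{k-1})m_n(z)$, $\E_k$ conditioning on the first $k$ columns of $\fW_n$. The Sherman--Morrison/block identity expresses $\gamma_k$ through $G_{kk}(z)$ and the bilinear forms $\mathbf{b}_k^*\fG^{(k)}(z)\mathbf{b}_k$ and $\mathbf{b}_k^*\fG^{(k)}(z)\tilde\fx_n$ ($\fG^{(k)}$ being the resolvent with the $k$-th row/column removed, $\mathbf{b}_k$ the $k$-th column off the diagonal); concentration of quadratic forms for the truncated entries, together with $\|\fG\|\le v^{-1}$ and $\E_{k-1}|\mathbf{b}_k^*\fG^{(k)}\fx_n|^2\le(nv^2)^{-1}$, yields after Burkholder's inequality a bound
\[
\E\,\bigl|m_n(u+iv_n)-\E m_n(u+iv_n)\bigr|^{2p}\ \le\ \frac{C_p}{(n v_n^{2})^{\,p}},
\]
uniformly in the unit vector $\fx_n$ and in $u\in[-A,A]$, for every integer $p$ with $2p$ not exceeding the number of finite moments, i.e. $p\le 4$ here. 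Then $\E\bigl(\int_{-A}^{A}|m_n-\E m_n|\,du\bigr)^{2p}\le(2A)^{2p-1}\int_{-A}^{A}\E|m_n-\E m_n|^{2p}du\le C'_p(nv_n^2)^{-p}$. Choosing $v_n\asymp n^{-1/6}$ (so that the Esseen leftover $v_n$ is of the target size and the deterministic part $O(n^{-1/3})$ is negligible), Markov's inequality with $p=4$ gives
\[
P\bigl(\|H^{W_n}-F\|>\ep\, n^{-1/6}\bigr)\ \le\ \frac{C'_4\,(n v_n^{2})^{-4}}{\ep^{8}\,n^{-4/3}}\ =\ \frac{C_\ep}{n^{4/3}},
\]
which is summable; combined with the summable truncation probability, Borel--Cantelli yields $\|H^{W_n}-F\|=O_{a.s.}(n^{-1/6})$.

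The main obstacle is the uniform moment estimate on $|m_n(z)-\E m_n(z)|^{2p}$ up to $p=4$: one must (i) push the truncation/recentring of Theorem \ref{thm2} through at moment order $8$ without losing the scaling; (ii) control the bilinear forms $\mathbf{b}_k^*\fG^{(k)}\fx_n$ with the correct power of $v_n$ even when $\fx_n$ is localized (e.g. $\fx_n=\fe_1$, where $m_n=G_{11}$); and (iii) handle $u$ near the spectral edges $\pm2$, where only $\|\fG(z)\|\le v_n^{-1}$ is available and the denominators in the Sherman--Morrison expansion are delicate --- this edge regime is what limits how small $v_n$ may be taken, and hence fixes the exponent $1/6$ (which the simulations in Section \ref{simulation} suggest is far from optimal, the truth being $n^{-1/2}$). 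The rest is bookkeeping already present, in essence, in the proof of Theorem \ref{thm2}.
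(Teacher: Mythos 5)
Your proposal is correct and follows essentially the same route as the paper: after truncation, apply Bai's Berry--Esseen-type inequality with $v\asymp n^{-1/6}$, bound $s_n^H(z)-\E s_n^H(z)$ by the martingale/Burkholder moment estimate (the paper's Lemma \ref{mem}), and finish with Markov plus Borel--Cantelli, Theorem \ref{thm1} taking care of $\|\E H^{W_n}-F\|$. The only cosmetic difference is that you cap the moment order at $2p=8$ (yielding a summable $n^{-4/3}$ tail) and retruncate at $n^{7/16}$, whereas the paper keeps its truncation at $\epsilon_n n^{1/4}$ and, since the truncated entries are bounded, takes $2l\ge 16$ in Lemma \ref{mem} to get an $n^{-2}$ tail; both choices close the Borel--Cantelli argument.
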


\begin{rem}
	We use notation $\xi_n=O_p(a_n)$ and $\eta_n=o_{a.s.}(b_n)$ if, for any $\ep>0$, there exist a large constant $C_1>0$ and a random variable $c_2>0$, such that
	\[
	P\(|\xi_n|/a_n\geq C_1\)\le \ep  ~~~~{\rm and }~~~~ P\(|\eta_n|/b_n\le c_2\)=1,
	\]
	respectively.
\end{rem}

\begin{rem}
	As explained before, we believe that theorem \ref{thm1} establishes the optimal bound for $\De^H$, which has a slower convergence rate than the expected ESD. The theoretical reason is as follows. We can see that from the proof of theorem \ref{thm1}, the bounds for both $\De^H$ and the expected ESD are derived in terms of their Stieltjes transforms (as defined in \eqref{ST}). The Stieltjes transform of the ESD only involves  $n$ equally-weighted diagonal elements of the resolvent matrix $(\fW_n-z\indic)^{-1}$. While the Stieltjes transform of the VESD considers both diagonal and off-diagonal elements with unequal weight. 
	The number of parameters increases from $n$ to $n^2$, and the weight is changed from equality to inequality.  
	Therefore,  it is not surprising that the VESD function $H^{\fW_n}$ has slower optimal convergence rate than the ESD $F^{\fW_n}$. 
\end{rem}

The rest of the paper is organized as follows.
Simulation studies are presented in Section \ref{simulation}.
Proof strategy is represented in Section \ref{strategy}.
Proofs of Theorem \ref{thm1}, \ref{thm2} and \ref{thm3} are given in Section \ref{proof1} and \ref{proof23}.
Several repeatedly employed results and existing lemmas are contained in Section \ref{Appendix1} and \ref{Appendix2}.
Truncation, centralization and rescaling are postponed in last section.

{\em Notation.} For Hermitian matrix $\fA$, $\|\fA\|=\la_{\max}(\fA)$ denotes its spectral norm.
For any vector $\fx$, $\|\fx\|$ denotes the Euclidean norm.
$\fX^*$ denotes the conjugate transpose of a matrix or vector $\fX$.
For any $z\in\bC$, we write $\Re(z)$, $\Im(z)$ as its real and imaginary parts, and $\bar{z}$ as its complex conjugate.
We also denote $\bC^+:=\{z\in\bC:\Re(z)>0\}$ and
$\cD:=\{z\in\bC: \Re(z)>0, \Im(z)\geq C_0n^{-1/2},|z|\le C\}$ for some positive constants $C_0$ and $C$.
Throughout this paper, $C$ denotes a generic constant, which may stand for different values at different appearances.
Let $\textrm{E}_k$ denote the conditional expectation given $\{X_{ij},k<i,j\le n\}$.


\section{Simulation Studies}\label{simulation}
In this section, we present some simulation studies to illustrate the behavior of the ESD and the VESD functions of Wigner matrix $\fW_n$ to the semicircle law.  
Further, under each setting, we plot Kolmogorov distances $\Delta_p^H$ as dimension $n$ increases to demonstrate the rate of convergence of the VESD function to the semicircle law.

In the simulation studies below, the Wigner matrix $\fW_n$ is taken to be $\frac{1}{\sqrt{2n}}(\fM+\fM^*)$, where $\fM=(M_{ij})$ whose entries are drawn independently from the standard normal distribution. 
To generate the unit vector $\fx_n$, we assume that $\fx_n=\frac{\fz}{\|\fz\|}$, with $\fz=(z_1,\cdots,z_n)$ and $z_i$ are drawn independently from the continuous uniform distribution on the interval $(0,1)$. 

In Figure \ref{plot}, each plot shows three distribution functions, those of the ESD, VESD and semicircle law. The dimension $n$ is taken to be 50, 500 and 2000, respectively. 
We can see that both the ESD and the VESD functions of $\fW_n$ converge to the semicircle law as dimension increases. But the VESD function seems more volatile than the ESD function. 
Compared with the ESD $F^{\fW_n}$, the VESD $H^{\fW_n}$ has a large deviation when $n$ is small, and it still has some distance when $n$ is as large as 2000. 

Next, we show the convergence rate of the VESD function of $\fW_n$ based on different choice of unit vector $\fx_n$. 
Denote the unit vector as $\fx_n=\frac{\fz}{\|\fz\|}$, where $\fz=(z_1,\cdots,z_n)$.  
In Figure \ref{rate}, we use black, blue, orange and green dashed lines to represent the Kolmogorov distance $\De_p^H$ when $z_i$ are drawn independently from uniform (0,1), standard normal, Poisson(1) and Binomial(10,0.6) distributions, respectively. 
The dimension $n$ increases from 50 to 1000. 
To show the rate of convergence of the VESD function $H^{\fW_n}$, under each setting, we take an average over 1000 repetitions to report one point as a Kolmogorov distance $\De_p^H$. 
Then we let the dimension $n$ increase from 50 to 1000.  
We can see that the convergence rate does not depend on the choice of unit vector $\fx_n$.  
Finally, we use red solid line to represent function $f(x)=1.15x^{-1/2}$, which matches the four curves very well. 
This implies that the optimal rate of convergence of the VESD function to the semicircle law is of order $O(n^{-1/2})$.  

\begin{figure}[h!]\centering
	\includegraphics[width=.45\textwidth]{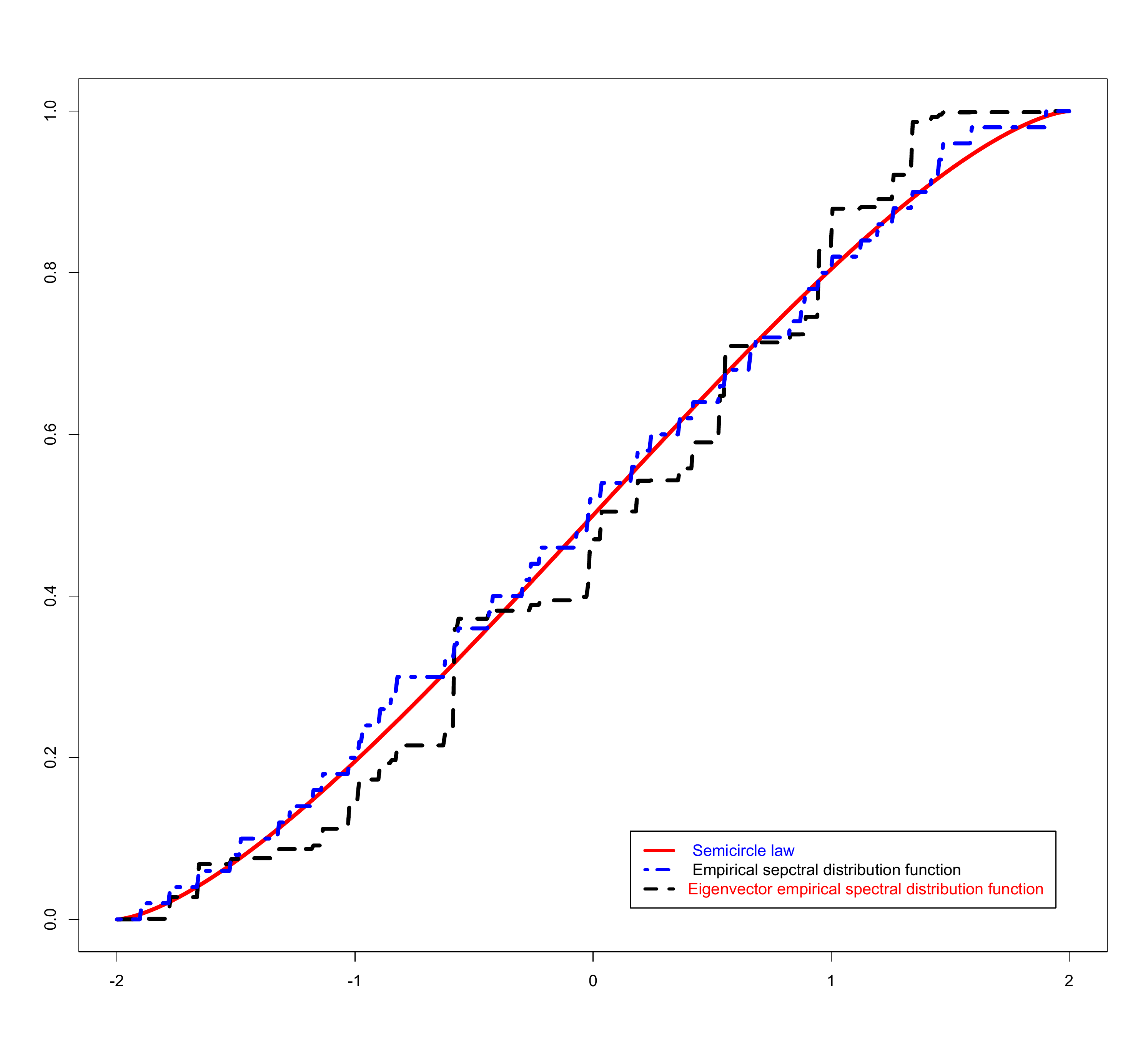}
	\includegraphics[width=.45\textwidth]{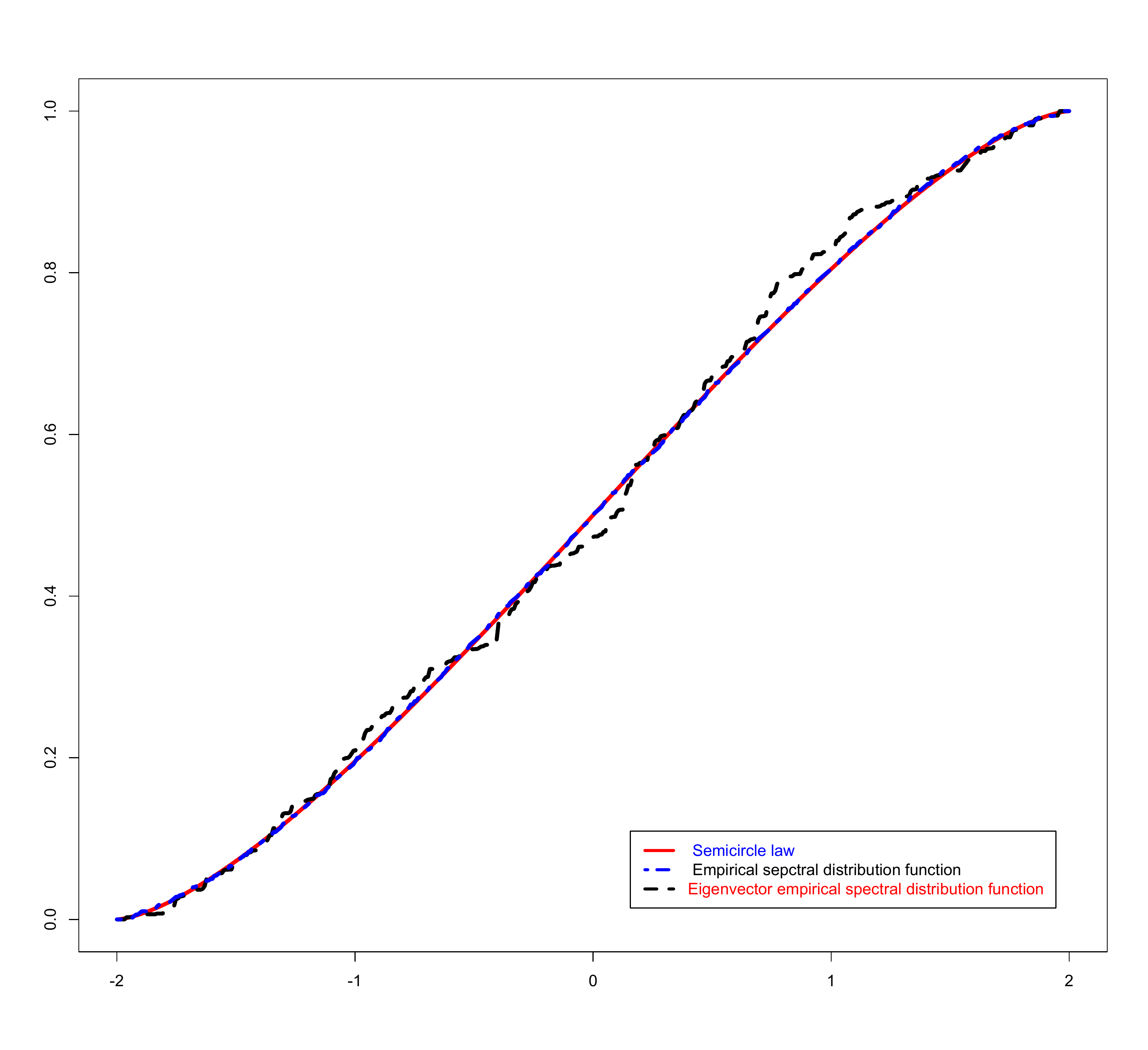}
	\includegraphics[width=.45\textwidth]{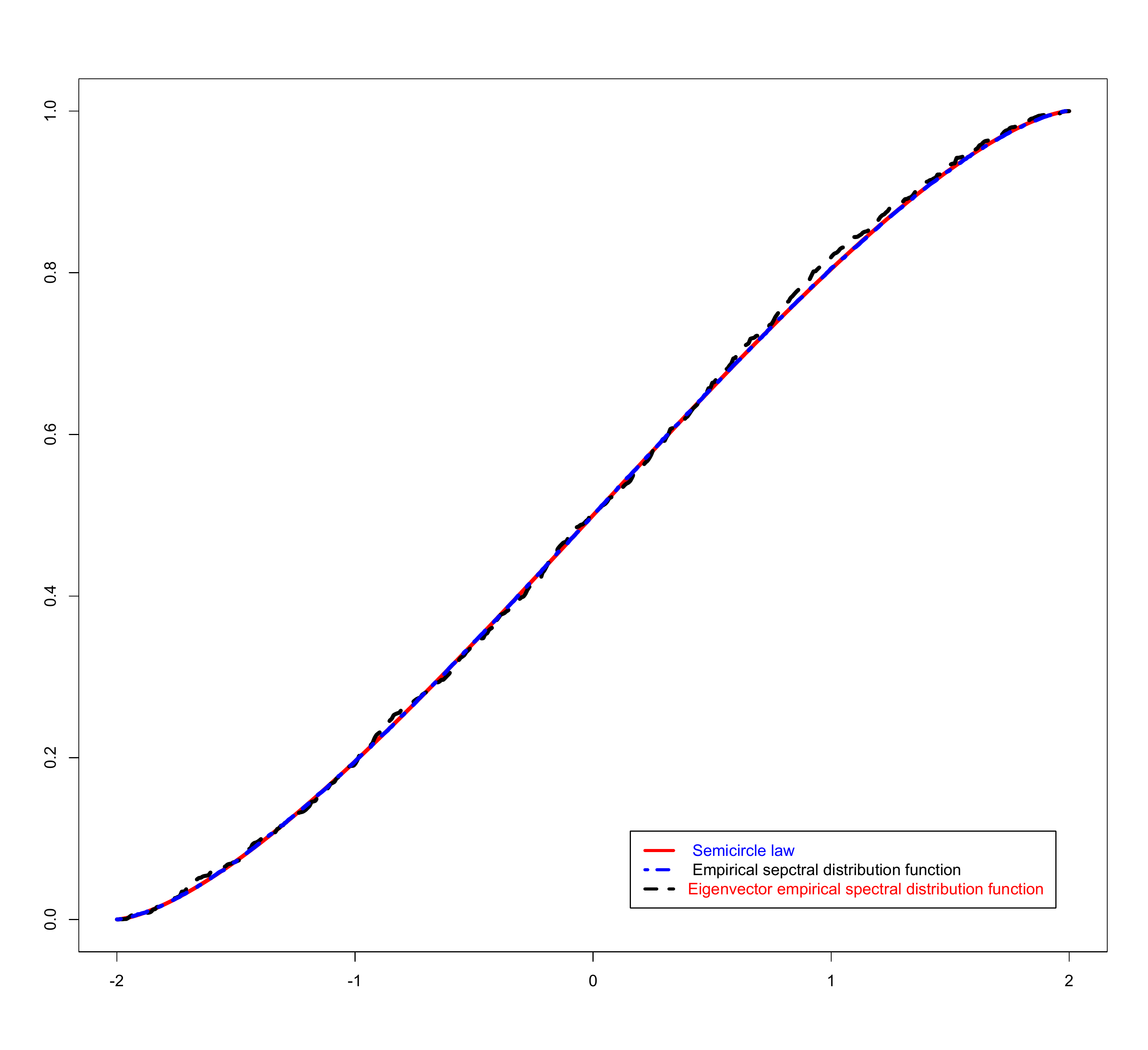}
	\caption{Each plot shows three functions of $\fW_n$, those of the empirical spectral distribution (ESD), the eigenvector empirical spectral distribution (VESD) and the semicircle law. The dimensions are $n=50,500$ and 2000, respectively.}
	\label{plot}
\end{figure}

\begin{figure}[h!]\centering
	\includegraphics[width=.5\textwidth]{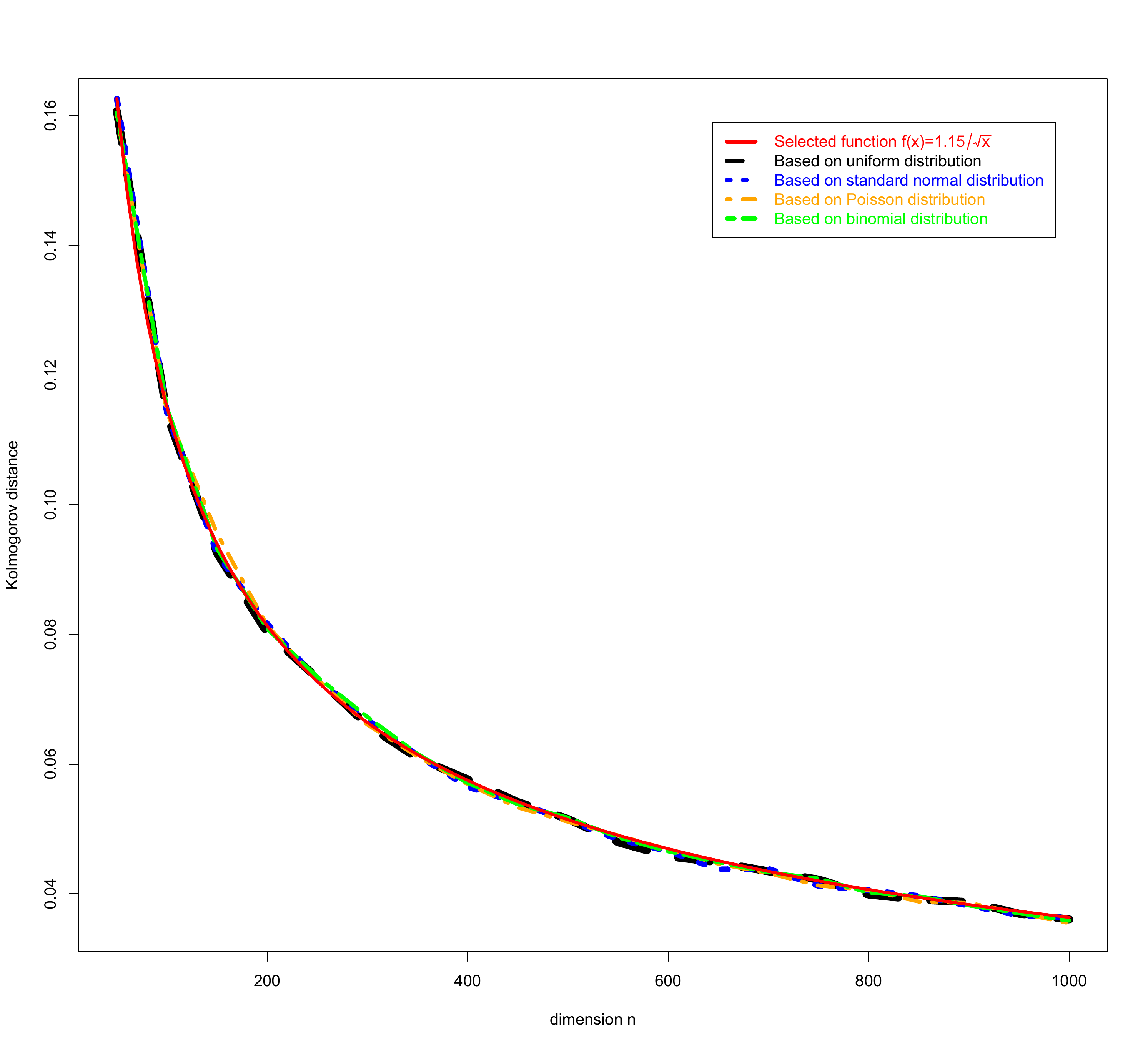}
	\caption{Denote the unit vector $\fx_n$ as $\fx_n=\fz/\|\fz\|$ with $\fz=(z_1,\cdots,z_n)$. As dimension $n$ increases from 50 to 1000, the black, blue, orange and green dashed lines represent the Kolmogorov distance $\De_p^H$ when $z_i$ are drawn independently from uniform(0,1), N(0,1), Poisson(1) and Binomial(10,0.6) distributions. Then we carefully choose the coefficient of $n^{-1/2}$ and use function $f(x)=1.15x^{-1/2}$to fit the four curves as shown in the red solid line.}
	\label{rate}
\end{figure}


\section{Proof Strategy}\label{strategy}
The required convergence rates of the VESD $H^{\fW_n}$ are established by using the following Berry-Esseen type inequality.
\begin{lem}(Theorem 2.2 in \cite{MR1217559})\label{Bai_lem}
	Let $F$ be a distribution function and let $G$ be a function of
	bounded variation satisfying $\int |F(x)-G(x)|dx < \infty$. Denote
	their Stieltjes transforms by $f(z)$ and $g(z)$, respectively. Then
	\begin{eqnarray}\label{Bai_inequality}
	&\quad&\|F-G\|= \sup_x |F(x)-G(x)| \nonumber\\
	&\leq & \dfrac{1}{\pi (1-\kappa)(2\gamma-1)}\bigg(\int_{-A}^{A} |f(z)-g(z)|du  +\frac{2\pi}{v}\int_{|x|>B} |F(x)-G(x)|dx \nonumber\\
	&& +\frac{1}{v}\sup_x \int_{|h|\leq 2v\tau} |G(x+h)-G(x)|dh\bigg),
	\end{eqnarray}
	where $z=u+iv$ is a complex variable, $\gamma$, $\kappa$, $\tau$, $A$ and $B$ are positive constants such that $A>B$,
	$$\kappa =\frac{4B}{\pi (A-B)(2\gamma-1)}<1,\quad\textrm{and}\quad\gamma=\frac{1}{\pi}\int_{|u|<\tau}\frac{1}{u^2+1}du>\frac{1}{2}.$$
\end{lem}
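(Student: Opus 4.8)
The plan is to establish this by a smoothing (Berry--Esseen type) argument adapted to the Cauchy/Poisson kernel, exploiting that, up to the factor $1/\pi$, the imaginary part of a Stieltjes transform on the line $\Im z=v$ is precisely the density of the corresponding distribution convolved with the Cauchy law of scale $v$. Write $\mu:=F-G$ and $\De:=\|F-G\|$. Since $\int|\mu(x)|\,dx<\infty$ and $\mu$ is of bounded variation, $\mu(x)\to0$ as $x\to\pm\infty$; hence there is a point $x_0$ at which $|\mu|$ comes arbitrarily close to $\De$, and after possibly passing to a reflected (or left-continuous) variant of the pair $(F,G)$ -- a routine finite case analysis accounting for jumps -- we may assume $F(x_0)-G(x_0)\ge\De-\ep$ for an arbitrary $\ep>0$. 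Put $C_v(y):=\frac{v}{\pi(y^2+v^2)}$, $F_v:=F*C_v$, $G_v:=G*C_v$. Two facts will be used: (i) $\frac1\pi\Im f(u+iv)$, $\frac1\pi\Im g(u+iv)$ are the densities of $F_v$, $G_v$, whence $(F_v-G_v)(x)=\frac1\pi\int_{-\infty}^{x}\Im\big(f(u+iv)-g(u+iv)\big)\,du$; and (ii) $\int_{-\infty}^{\infty}\Im(f-g)(u+iv)\,du=\pi\big[(F-G)(+\infty)-(F-G)(-\infty)\big]=0$.

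\emph{Step 1 (lower bound for $\De$ via the smoothed difference).} Substituting $y=vt$ in the convolution, $(F_v-G_v)(x_0+v\tau)=\int_{-\infty}^{\infty}\mu\big(x_0+v(\tau-t)\big)\,\frac{dt}{\pi(1+t^2)}$. For $t\le\tau$ the argument is $\ge x_0$, so monotonicity of $F$ gives $\mu(x_0+v(\tau-t))\ge\De-\ep-|G(x_0+v(\tau-t))-G(x_0)|$, and for $|t|\le\tau$ the shift $v(\tau-t)$ lies in $[0,2v\tau]$; for the remaining $t$ we use only $\mu\ge-\De$. Using $\frac1\pi\int_{|t|<\tau}\frac{dt}{1+t^2}=\gamma$ and $\frac1\pi\int_{t>\tau}\frac{dt}{1+t^2}=\frac1\pi\int_{t<-\tau}\frac{dt}{1+t^2}=\frac{1-\gamma}{2}$, these estimates combine into
\[
(F_v-G_v)(x_0+v\tau)\;\ge\;(2\gamma-1)\De-\ep-\frac1{\pi v}\,\sup_x\int_{|h|\le 2v\tau}|G(x+h)-G(x)|\,dh .
\]
Thus $(2\gamma-1)\De$ is controlled by $|(F_v-G_v)(x_0+v\tau)|$ plus the $G$-oscillation term and $\ep$.

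\emph{Step 2 (the Stieltjes integral, the tail, and conclusion).} By fact (i), $|(F_v-G_v)(x_0+v\tau)|=\frac1\pi\big|\int_{-\infty}^{x_0+v\tau}\Im(f-g)(u+iv)\,du\big|$; split the $u$-range at $\pm A$. The part over $|u|\le A$ is at most $\frac1\pi\int_{-A}^{A}|f(u+iv)-g(u+iv)|\,du$, since $|\Im w|\le|w|$. For the part over $|u|>A$, interchange the order of integration and integrate by parts in $\lambda$; with fact (ii) and $\int_c^{\infty}\frac{v\,du}{(u-\lambda)^2+v^2}=\frac\pi2-\arctan\frac{c-\lambda}{v}$ one gets $\int_{c}^{\infty}\Im(f-g)(u+iv)\,du=-\int\frac{v\,\mu(\lambda)}{(\lambda-c)^2+v^2}\,d\lambda$ (and symmetrically on $(-\infty,-c)$). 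Splitting the $\lambda$-integral at $\pm B$: on $\{|\lambda|>B\}$ the kernel is $\le1/v$, yielding $\le\frac1v\int_{|x|>B}|F-G|\,dx$; on $\{|\lambda|\le B\}$ one has $|\lambda\mp A|\ge A-B$ and $|\mu|\le\De$, yielding a fixed multiple of $\frac{B}{A-B}\De$, namely the self-referential term $\kappa\De$. Collecting Steps 1 and 2, one arrives at
\[
(2\gamma-1)\De\;\le\;\kappa\De+\frac1\pi\int_{-A}^{A}|f-g|\,du+\frac1v\int_{|x|>B}|F-G|\,dx+\frac1v\,\sup_x\int_{|h|\le 2v\tau}|G(x+h)-G(x)|\,dh+\ep ,
\]
with $\kappa<1$ by hypothesis; letting $\ep\downarrow0$ and dividing by $(2\gamma-1)(1-\kappa)$ gives the asserted inequality \eqref{Bai_inequality} (the numerical constants $\pi$, $2\gamma-1$, $\kappa$ emerging from careful bookkeeping of the factors above).

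\emph{Main obstacle.} The delicate point is the tail analysis in Step~2: from $\int_{|u|>A}\Im(f-g)(u+iv)\,du$ one must extract simultaneously (a) the genuinely small term $\frac1v\int_{|x|>B}|F-G|\,dx$ from the far field $|\lambda|>B$, and (b) only a fraction -- strictly less than $1$, thanks to the gap $A>B$ -- of $\De$ itself from the near field $|\lambda|\le B$. The integration-by-parts identity together with the vanishing of the total integral of $\Im(f-g)$ (fact (ii)) is exactly what converts the tail into a convolution against a localized Cauchy kernel and makes the quantitative estimate $\kappa<1$ attainable. A secondary nuisance is the handling of jumps of $F$ and $G$ and the four sign/left-limit cases for the location of the near-maximum of $|F-G|$, which must be arranged so that the monotonicity argument used in Step~1 is legitimate.
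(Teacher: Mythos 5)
The paper does not prove this lemma at all: it is imported verbatim as Theorem 2.2 of Bai (1993) \cite{MR1217559}, so there is no internal proof to compare against. Your reconstruction is essentially Bai's original argument, with his two stages (the full-line smoothing inequality, Theorem 2.1 there, and its truncation to $[-A,A]$, Theorem 2.2) merged into one pass: your Step 1 is exactly the Cauchy-kernel smoothing/monotonicity argument that yields the factor $(2\gamma-1)$ and the $G$-oscillation term, while your Step 2 replaces Bai's estimate of $\int_{|u|>A}|f-g|\,du$ (via $|f-g|\le\int |F-G|(x)\,\big((x-u)^2+v^2\big)^{-1}dx$ and integrating the kernel in $u$ over $|u|>A$) by an integration by parts applied directly to the tails of $\int^{x}\Im(f-g)\,du$, using that $F-G$ vanishes at $\pm\infty$. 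Both routes produce the far-field term $v^{-1}\int_{|x|>B}|F-G|\,dx$ and a self-referential near-field term of size a multiple of $\frac{B}{A-B}\Delta$, absorbed on the left because $\kappa<1$; so your approach is sound and, if anything, slightly more direct. Two bookkeeping points you gloss over do need attention but work out: (a) when $x_0+v\tau>A$ the piece $\int_A^{x_0+v\tau}$ is not a complete tail, so write it as $\int_A^{\infty}-\int_{x_0+v\tau}^{\infty}$, giving up to three tail pieces whose far-field contributions total at most $\frac{3}{\pi v}\int_{|x|>B}|F-G|\,dx$, still inside the stated $\frac{2\pi}{v}$ allowance; (b) on the near field $|\lambda|\le B$ you should bound the kernel by $\frac{v}{(\lambda-c)^2+v^2}\le\frac{1}{2(A-B)}$ (AM--GM), not by $\frac{v}{(A-B)^2}$, so that the resulting multiple of $\Delta$ is $v$-free and valid for every $v>0$; this gives at most $\frac{3B}{\pi(A-B)}\Delta\le\kappa\Delta$ since $2\gamma-1<1$. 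The reflection/left-limit device for the case where $G-F$ nearly attains $\Delta$ is the standard fix and is legitimate.
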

Here the Stieltjes transform for any distribution function $G(x)$ is defined by
\begin{eqnarray}\label{ST}
	s_G:=\int \dfrac{1}{x-z}dG(x), \qquad z\in \mathbb{C}^{+}=\{z\in\mathbb{C}, \Im z>0\}.
\end{eqnarray}
Then the Stieltjes transform of the ESD  $F^{\fW_n}(x)$ and the VESD $H^{\fW_n}(x)$ of Wigner matrices are denoted as
\[
s_n(z)=\dfrac{1}{n}\tr\(\fW_n-z\indic\)^{-1}, ~~~ {\rm and}~~~
s_n^H(z)=\fx_n^*\(\fW_n-z\indic\)^{-1}\fx_n,
\]
respectively, where $\mathbf{x}_n\in\mathbb{C}^n$, $\|\mathbf{x}_n\|=1$.

\begin{rem}
	In the proof of Theorem \ref{thm1}, inequality \eqref{Bai_inequality} is adopted to establish the convergence rate of $\E H^{\fW_n}$.
	We will see that notations $F$, $G$ in \eqref{Bai_inequality} will be replaced by
	the expected VESD $\E H^{\fW_n}$ and semicircle law $F$, respectively, and  the convergence rate of $\E H^{\fW_n}$ is mainly dominated by that of $\E s_n^H(z)$ and $v$ tending to 0.
	While constants $\ga,\kappa,\tau,A,B$ have nothing to do with the convergence rate.
	Moreover, in the proof, we can see that the convergence rate of $\E H^{\fW_n}$ also depends on that of $\E F^{\fW_n}$.
	As the optimal rate of $\E H^{\fW_n}$ is believed to be $O(n^{-1/2})$,
	therefore, the existing result about convergence rate of $\E F^{\fW_n}$
	in Lemma \ref{Delta1} would be enough to be used
	to determine the optimal convergence rate of $\E H^{\fW_n}$.
\end{rem}

\section{Proof of Theorem \ref{thm1}}\label{proof1}
Before proceeding with the proof of Theorem 1, it is necessary to truncate, re-centralize and re-normalize the random variables $X_{ij}$. We postpone the proof in Section \ref{truncation} and suppose that the underlying random variables satisfy $X_{ii}=0$ and for $i\neq j$, $|X_{ij}|\le \epsilon_nn^{1/4}$, $\textrm{E}X_{ij}=0$ and $\var(X_{ij})=1$, along with $\textrm{E}|X_{ij}|^8\le C$, where $\epsilon_n$ is a sequence decreasing to 0. The Wigner matrix $\fW_n=\dfrac{1}{\sqrt{n}}\fX_n$, where $\fX_n=(X_{ij})_{i,j=1}^n$.

Let $\mathbf{w}_k$ be the $k$th column of $\mathbf{W}_n$ and $\mathbf{e}_k$ be the $n\times 1$ column vector with the $k$th element being 1 and 0 otherwise. Then
\[
\fW_n=\sum_{k=1}^n \fw_k \fe_k^* ~~~~{\rm and}~~~~ \E\(\fw_k\fw_k^*\)=\dfrac{1}{n}\(\indic-\fe_k\fe_k^*\).
\]
Let $\mathbf{W}_{nk}$ be the matrix obtained from $\mathbf{W}_n$ by replacing all elements of its $k$th column and $k$th row with zero. Then
\[
\mathbf{W}_n=\mathbf{W}_{nk}+\mathbf{w}_k\mathbf{e}_k^*+\mathbf{e}_k\mathbf{w}_k^*.
\]

Denote
\[
\fA(z):=\fW_n-z\indic, ~~~~{\rm and}~~~~ \fA_k(z):=\fW_{nk}-z\indic.
\]
Then we get $\|\fA^{-1}(z)\|\le v^{-1}$, $\|\fA_k^{-1}(z)\|\le v^{-1}$ and
\begin{eqnarray}\label{AAk}
\mathbf{A}_k^{-1}(z)&=&\mathbf{A}^{-1}(z)+\mathbf{A}^{-1}(z)(\mathbf{A}(z)-\mathbf{A}_k(z))\mathbf{A}_k^{-1}(z)\nonumber\\
&=&\fA^{-1}(z)+\mathbf{A}^{-1}(z)(\mathbf{w}_k\mathbf{e}_k^*+\mathbf{e}_k\mathbf{w}_k^*)\mathbf{A}_k^{-1}(z).
\end{eqnarray}

Notice that all elements of the $k$th column and $k$th row of $\mathbf{A}_k^{-1}(z)$ are zero except that the $(k,k)$th element is $-1/z$. Consequently, we have
\begin{eqnarray}\label{Prop_Ak}
\mathbf{e}_k^*\mathbf{A}_k^{-1}(z)\mathbf{e}_k&=&-\dfrac{1}{z},~~~~~~~~~~ \mathbf{e}_k^*\mathbf{A}_k^{-1}(z)\mathbf{x}_n=-\dfrac{x_k}{z}, \nonumber\\ \mathbf{e}_k^*\mathbf{A}_k^{-1}(z)\mathbf{w}_k
&=&\fw_k^*\fA_k^{-1}(z)\fe_k=0.
\end{eqnarray}

For any $n\times n$ invertible matrice $\mathbf{B}$, vectors $\mathbf{a},\mathbf{b}\in\bC^n$ and scalar $q$, we have the following Sherman-Morrison formula
\begin{eqnarray}
(\mathbf{B}+q\mathbf{ab}^*)^{-1}&=&\mathbf{B}^{-1}-\dfrac{q\mathbf{B}^{-1}\mathbf{ab}^*\mathbf{B}^{-1}}{1+q\mathbf{b}^*\mathbf{B}^{-1}\mathbf{a}},\label{b1}\\
(\mathbf{B}+q\mathbf{ab}^*)^{-1}\mathbf{a}&=&\dfrac{\mathbf{B}^{-1}\mathbf{a}}{1+q\mathbf{b}^*\mathbf{B}^{-1}\mathbf{a}},\label{b1u}\\
\mathbf{b}^*(\mathbf{B}+q\mathbf{ab}^*)^{-1}&=&\dfrac{\mathbf{b}^*\mathbf{B}^{-1}}{1+q\mathbf{b}^*\mathbf{B}^{-1}\mathbf{a}},\label{vb1}
\end{eqnarray}

Further define
\[
\mathbf{W}_{k1}:=\mathbf{W}_{nk}+\mathbf{e}_k\mathbf{w}_k^*,~~~~~ \mathbf{W}_{k2}:=\mathbf{W}_{nk}+\mathbf{w}_k\mathbf{e}_k^*
\]
and for $j=1,2$,
\[
\fA_{kj}^{-1}(z):=\(\fW_{kj}-z\indic\)^{-1}.
\]

Denote
\begin{eqnarray*}
	\alpha_{k}&:=&\dfrac{1}{1+z^{-1}\mathbf{w}_k^*\mathbf{A}_k^{-1}(z)\mathbf{w}_k},\\
	\eta_k&:=&\mathbf{w}_k^*\mathbf{A}_k^{-1}(z)\mathbf{x}_n\mathbf{x}_n^*\mathbf{A}_k^{-1}(z)\mathbf{w}_k
	-\dfrac{1}{n}\left(\mathbf{x}_n^*\mathbf{A}_k^{-2}(z)\mathbf{x}_n-z^{-2}|x_k|^2\right),\\
	\gamma_k&:=&\dfrac{1}{1+\dfrac{1}{nz}\(\textrm{tr}\mathbf{A}_k^{-1}(z)+z^{-1}\)},\\ \xi_k&:=&\mathbf{w}_k^*\mathbf{A}_k^{-1}\mathbf{w}_k-\dfrac{1}{n}\(\tr\fA_k^{-1}+z^{-1}\).
\end{eqnarray*}
It is easy to see that
\[
\alpha_k=\gamma_k-z^{-1}\alpha_k\gamma_k\xi_k.
\]

First we will show the following useful facts:
\begin{eqnarray}
\fe_k^*\fA_{k1}^{-1}(z)\fw_k
&=&\fw_k^*\fA_{k2}^{-1}(z)\fe_k
=z^{-1}\fw_k^*\fA_{k}^{-1}(z)\fw_k \label{al_12}\\
\fA_{k1}^{-1}(z)&=&\fA_{k}^{-1}(z)-\fA_{k}^{-1}(z)\fe_k\fw_k^*\fA_{k}^{-1}(z)\label{Ak1}\\
\fA_{k2}^{-1}(z)&=&\fA_{k}^{-1}(z)-\fA_{k}^{-1}(z)\fw_k\fe_k^*\fA_{k}^{-1}(z)\label{Ak2}\\
\fA^{-1}(z)\fw_k&=&\al_k \fA_{k1}^{-1}(z)\fw_k \label{Awk}\\
\fA^{-1}(z)\fe_k&=&\al_k \fA_{k2}^{-1}(z)\fe_k \label{Aek}\\
\fe_k^*\fA^{-1}(z)&=&\al_k \fe_k^*\fA_{k1}^{-1}(z) \label{ekA}
\end{eqnarray}

By \eqref{Prop_Ak} and equation \eqref{b1}, it follows that
\begin{eqnarray*}
	\fe_k^*\fA_{k1}^{-1}(z)\fw_k&=&\fe_k^*\(\fA_{k}(z)+\fe_k\fw_k^*\)^{-1}\fw_k \\
	&=&\fe_k^*\(\fA_k^{-1}(z) -\dfrac{\fA_k^{-1}(z)\fe_k\fw_k^*\fA_k^{-1}(z)}{1+\fw_k^*\fA_k^{-1}(z)\fe_k}\)\fw_k\\
	&=&z^{-1}\fw_k^*\fA_k^{-1}(z)\fw_k.
\end{eqnarray*}
Similar to show that
$\fw_k^*\fA_{k2}^{-1}(z)\fe_k=z^{-1}\fw_k^*\fA_k^{-1}(z)\fw_k$.

\eqref{Ak1} and \eqref{Awk} follow from the facts that
\begin{eqnarray*}
	\mathbf{A}_{k1}^{-1}(z)&=&(\mathbf{W}_{nk}-z\mathbf{I}+\mathbf{e}_k\mathbf{w}_k^*)^{-1}
	=\mathbf{A}_k^{-1}(z)-\dfrac{\mathbf{A}_k^{-1}(z)\mathbf{e}_k\mathbf{w}_k^*\mathbf{A}_k^{-1}(z)}{1+\mathbf{w}_k^*\mathbf{A}_k^{-1}(z)\mathbf{e}_k}\\
	&=&\mathbf{A}_k^{-1}(z)-\mathbf{A}_k^{-1}(z)\mathbf{e}_k\mathbf{w}_k^*\mathbf{A}_k^{-1}(z).
\end{eqnarray*}
and by \eqref{b1u}
\begin{eqnarray*}
	\mathbf{A}^{-1}(z)\mathbf{w}_k
	=\(\mathbf{W}_{k1}-z\mathbf{I}+\mathbf{w}_k\mathbf{e}_k^*\)^{-1}\mathbf{w}_k
	=\dfrac{\mathbf{A}_{k1}^{-1}(z)\mathbf{w}_k}{1+\mathbf{e}_k^*\mathbf{A}_{k1}^{-1}(z)\mathbf{w}_k}
	=\alpha_k\mathbf{A}_{k1}^{-1}(z)\mathbf{w}_k,
\end{eqnarray*}
Moreover \eqref{Ak2}, \eqref{Aek} and \eqref{ekA} can be proved similarly.

Now, we write
\[
\mathbf{A}(z)=\fW_n-z\indic=\sum_{k=1}^n\mathbf{w}_k\mathbf{e}_k^*-z\mathbf{I},
\]
and multiply by $\mathbf{A}^{-1}(z)$ on both sides. It leads to
\[
z\mathbf{A}^{-1}(z)+\mathbf{I}
=\sum_{k=1}^n \mathbf{w}_k\mathbf{e}_k^*\mathbf{A}^{-1}(z).
\]
Multiplying $\mathbf{x}_n^*$ from left and $\mathbf{x}_n$ from right and then taking expectation, we have
\begin{eqnarray*}
	z\mathbf{x}_n^*\textrm{E}\mathbf{A}^{-1}(z)\mathbf{x}_n+1
	&=&\sum_{k=1}^n\textrm{E} \mathbf{x}_n^*\mathbf{w}_k\mathbf{e}_k^*\mathbf{A}^{-1}(z)\mathbf{x}_n \\
	&=&\sum_{k=1}^n\textrm{E}\al_k \mathbf{x}_n^*\mathbf{w}_k\mathbf{e}_k^*\mathbf{A}_{k1}^{-1}(z)\mathbf{x}_n
	~~~({\rm by}~ \eqref{ekA})\\
	&=&\sum_{k=1}^n\textrm{E}\al_k \mathbf{x}_n^*\mathbf{w}_k\mathbf{e}_k^*
	\(\mathbf{A}_k^{-1}(z)-\mathbf{A}_{k}^{-1}(z)\mathbf{e}_k\mathbf{w}_k^*\mathbf{A}_k^{-1}(z)\)\mathbf{x}_n
	~~~({\rm by}~ \eqref{Ak1})\\
	&=&L_1+L_2,
\end{eqnarray*}
where
\begin{eqnarray*}
	L_1&:=&\sum_{k=1}^n\textrm{E}\al_k \mathbf{x}_n^*\mathbf{w}_k\(\mathbf{e}_k^*\mathbf{A}_k^{-1}(z)\mathbf{x}_n\)
	=-\sum_{k=1}^n z^{-1}x_k\textrm{E}\al_k\mathbf{x}_n^*\mathbf{w}_k,\\
	L_2&:=&-\sum_{k=1}^n\textrm{E}\al_k \mathbf{x}_n^*\mathbf{w}_k\(\mathbf{e}_k^*\mathbf{A}_k^{-1}(z)\mathbf{e}_k\) \mathbf{w}_k^*\mathbf{A}_k^{-1}(z)\mathbf{x}_n
	=\sum_{k=1}^n z^{-1}\textrm{E}\al_k
	\mathbf{x}_n^*\mathbf{w}_k\mathbf{w}_k^*\mathbf{A}_k^{-1}(z)\mathbf{x}_n.
\end{eqnarray*}

Let $s(z)$ denote the Stieltjes transform of semicircle law, then $|s(z)|\le 1$ by (3.3) in \cite{MR1217559}.
Denote $s_n(z)$, $s_n^H(z)$ the Stieltjes transform of the ESD and the VESD, respectively.
Write $z:=u+iv$ with
\[
z\in\cD :=\{z:=u+iv: u>0, v\geq C_0n^{-1/2}, |z|\le C\} \subset \bC^+
\]
for some positive constant $C_0$ and $C$. Denote
\[
\Delta:=\left\|\textrm{E}F^{W_n}-F\right\|, ~~~~
\Delta^H:=\left\|\textrm{E}H^{W_n}-F\right\|
\]
the convergence rates of $\E F^{\fW_n}$ and $\E H^{\fW_n}$.
According to the definition of Stieltjes transform, integration by parts yields
\[
\lj\E s_n^H(z)-s(z)\rj\le \pi\De^H/v ~~~~{\rm and}~~~~
\lj\E s_n(z)-s(z)\rj\le \pi\De/v.
\]
Along with Lemma \ref{mem}, we have
\begin{eqnarray}\label{EsnH_bdd}
\E|s_n^H(z)|\le C(1+\De^H/v).
\end{eqnarray}

From (3.2) in \cite{MR1217559}, it is easy to verify that $s(z)$ satisfies the following equation,
\begin{eqnarray}
s(z)=-\dfrac{1}{z+s(z)}, \qquad \textrm{or} \qquad s^2(z)+zs(z)+1=0.
\label{sz}
\end{eqnarray}
Thus, we have
\begin{eqnarray*}
	|\textrm{E}s_n^H(z)-s(z)|&=&\left|\textrm{E}\mathbf{x}_n^*\mathbf{A}^{-1}(z)\mathbf{x}_n-s(z)\right|
	=\left|\textrm{E}\mathbf{x}_n^*\mathbf{A}^{-1}(z)\mathbf{x}_n+\dfrac{1}{z+s(z)}\right|\\
	&=&\dfrac{1}{|z+s(z)|}\left|(z+s(z))\textrm{E}\mathbf{x}_n^*\mathbf{A}^{-1}(z)\mathbf{x}_n+1\right|\\
	&=&\dfrac{1}{|z+s(z)|}\left|L_1+L_2+s(z)\textrm{E}\mathbf{x}_n^*\mathbf{A}^{-1}(z)\mathbf{x}_n\right|.
\end{eqnarray*}

By noticing that $\fw_k\fw_k^*$ and $\fA_k^{-1}(z)$ are independent and $\E(\fw_k\fw_k^*)=\dfrac{1}{n}\(\indic-\fe_k\fe_k^*\)$, and using \eqref{AAk}, we further write $L_2$ as
\begin{eqnarray*}
	L_2&=&\sum_{k=1}^nz^{-1}\E\al_k\fx_n^*\fw_k\fw_k^*\fA_k^{-1}(z)\fx_n\\
	&=&\sum_{k=1}^n z^{-1}\E\(\al_k+zs(z)\) \mathbf{x}_n^*\mathbf{w}_k\mathbf{w}_k^*\mathbf{A}_k^{-1}(z)\mathbf{x}_n
	-\sum_{k=1}^ns(z)\E\mathbf{x}_n^*\mathbf{w}_k\mathbf{w}_k^*\mathbf{A}_k^{-1}(z)\mathbf{x}_n\\
	&=&\sum_{k=1}^nz^{-1} \E(\alpha_k+zs(z)) \mathbf{x}_n^*\mathbf{w}_k\mathbf{w}_k^*\mathbf{A}_k^{-1}(z)\mathbf{x}_n
	+\dfrac{1}{n}\sum_{k=1}^n s(z) \textrm{E}\mathbf{x}_n^*\mathbf{e}_k\mathbf{e}_k^*\mathbf{A}_k^{-1}(z)\mathbf{x}_n\\
	&&-\dfrac{1}{n}\sum_{k=1}^ns(z)\textrm{E}\mathbf{x}_n^*\mathbf{A}_k^{-1}(z)\mathbf{x}_n
	\\
	&=&\sum_{k=1}^n z^{-1}\textrm{E}(\alpha_k+zs(z)) \mathbf{x}_n^*\mathbf{w}_k\mathbf{w}_k^*\mathbf{A}_k^{-1}(z)\mathbf{x}_n
	-\dfrac{1}{n}z^{-1}s(z)\sum_{k=1}^n|x_k|^2 \\
	&&-\dfrac{1}{n}\sum_{k=1}^ns(z)\textrm{E}\mathbf{x}_n^*\mathbf{A}^{-1}(z)\mathbf{x}_n -\dfrac{1}{n}\sum_{k=1}^ns(z) \textrm{E}\mathbf{x}_n^*\mathbf{A}^{-1}(z)(\mathbf{w}_k\mathbf{e}_k^*+\mathbf{e}_k\mathbf{w}_k^*)\mathbf{A}_k^{-1}(z)\mathbf{x}_n
	\\
	&=&\sum_{k=1}^n z^{-1}\textrm{E}(\alpha_k+zs(z)) \mathbf{x}_n^*\mathbf{w}_k\mathbf{w}_k^*\mathbf{A}_k^{-1}(z)\mathbf{x}_n
	-\dfrac{1}{nz}s(z)\\
	&&-s(z)\textrm{E}\mathbf{x}_n^*\mathbf{A}^{-1}(z)\mathbf{x}_n
	-\dfrac{1}{n}s(z)\sum_{k=1}^n\textrm{E}\mathbf{x}_n^*\mathbf{A}^{-1}(z)(\mathbf{w}_k\mathbf{e}_k^*+\mathbf{e}_k\mathbf{w}_k^*)\mathbf{A}_k^{-1}(z)\mathbf{x}_n.
\end{eqnarray*}
Therefore,
\[
L_2+s(z)\textrm{E}\mathbf{x}_n^*\mathbf{A}^{-1}(z)\mathbf{x}_n=L_{21}+L_{22}+L_{23},
\]
where
\begin{eqnarray*}
	L_{21}&:=&\sum_{k=1}^n z^{-1} \textrm{E}(\al_k+zs(z)) \mathbf{x}_n^*\mathbf{w}_k\mathbf{w}_k^*\mathbf{A}_k^{-1}(z)\mathbf{x}_n,\\
	L_{22}&:=&-\dfrac{s(z)}{n}\sum_{k=1}^n\textrm{E}\mathbf{x}_n^*\mathbf{A}^{-1}(z)(\mathbf{w}_k\mathbf{e}_k^*+\mathbf{e}_k\mathbf{w}_k^*)\mathbf{A}_k^{-1}(z)\mathbf{x}_n,\\
	L_{23}&:=&-\dfrac{1}{nz}s(z).
\end{eqnarray*}

By equation \eqref{sz} and fact $|s(z)|\le 1$, it follows that
\begin{eqnarray}\label{bdd_EST}
|\textrm{E}s_n^H(z)-s(z)|&=&\left|\textrm{E}\mathbf{x}_n^*\mathbf{A}^{-1}(z)\mathbf{x}_n-s(z)\right|\\
&=&\dfrac{1}{|z+s(z)|}\left|L_1+L_{21}+L_{22}+L_{23}\right|\\
&\le&|L_1|+|L_{21}|+|L_{22}|+|L_{23}|.
\end{eqnarray}
Obviously, $|L_{23}|\le \dfrac{C}{nv}$.

Since $\alpha_k=\gamma_k-z^{-1}\alpha_k\gamma_k\xi_k$, $\mathbf{w}_k$ and $\gamma_k$ are independent and $\textrm{E}\mathbf{w}_k=0$, we obtain
\begin{eqnarray*}
	|L_1|&=&\lj-\sum_{k=1}^nz^{-1}x_k\E\(\fx_n^*\fw_k\al_k\)\rj\\
	&=&\lj\sum_{k=1}^nz^{-2}x_k\E(\fx_n^*\fw_k\al_k\ga_k\xi_k)\rj\\
	&=&\lj\sum_{k=1}^nz^{-2}x_k\E(\fx_n^*\fw_k\ga_k^2\xi_k)
	-\sum_{k=1}^nz^{-3}x_k\E(\fx_n^*\fw_k\al_k\ga_k^2\xi_k^2)\rj\\
	&\le&|L_{11}|+|L_{12}|,
\end{eqnarray*}
where
\begin{eqnarray*}
	L_{11}:=\sum_{k=1}^nz^{-2}x_k\E(\fx_n^*\fw_k\ga_k^2\xi_k) ~~~{\rm and}~~~
	L_{12}:=\sum_{k=1}^nz^{-3}x_k\E(\fx_n^*\fw_k\al_k\ga_k^2\xi_k).
\end{eqnarray*}
Write $\fA_k^{-1}=(a_{ij})$, we then have
\begin{eqnarray*}
	\E\(\fx_n^*\fw_k\ga_k^2\xi_k\)
	&=&\E\(\fx_n^*\fw_k\ga_k^2\fw_k^*\fA_k^{-1}(z)\fw_k\)\\
	&=&\dfrac{1}{n\sqrt{n}}\E\ga_k^2\(\sum_{j=1}^n\bar{x}_jX_{jk}\) \(\sum_{j=1}^na_{jj}|X_{jk}|^2+\sum_{i\neq j}a_{ij}\bar{X}_{ik}X_{jk}\)\\
	&=&\dfrac{1}{n\sqrt{n}}\E\(\ga_k^2\sum_{j=1}^na_{jj}\bar{x}_j\) \E\(X_{jk}|X_{jk}|^2\).
\end{eqnarray*}
Notice that $\E|X_{12}|^8<\infty$ and $|z^{-1}\ga_k|$ is bounded by Lemma \ref{agb}, thus
\begin{eqnarray*}
	|L_{11}|\le\dfrac{C}{n\sqrt{n}} \(\sum_{k=1}^n|x_k|\)\(\sum_{j=1}^n|\bar{x}_j|\E|a_{jj}|\)
	\le \dfrac{C}{\sqrt{n}}(1+\De^H/v),
\end{eqnarray*}
where the last step comes from the facts that
\[
\dfrac{1}{n}\sum_{k=1}^n|x_k|\le \(\dfrac{1}{n}\sum_{k=1}^n|x_k|^2\)^{1/2}=\dfrac{1}{\sqrt{n}}
\]
and by \eqref{EsnH_bdd}
\[
\E|a_{jj}|=\E|\fe_j^*\fA_k^{-1}(z)\fe_j|
\le\max_{\|\fx_n\|=1}\E|\fx_n^*\fA_k^{-1}(z)\fx_n|
\le C(1+\De^H/v).
\]

By using Cauchy-Schwarz inequality, Lemma \ref{xi} and \ref{agb}, it follows that
\begin{eqnarray*}
	|L_{12}|&=&\left|\sum_{k=1}^nz^{-3}x_k\textrm{E}\mathbf{x}_n^*\mathbf{w}_k\alpha_k\gamma_k^2\xi_k^2\right|\\
	&\le&\sum_{k=1}^n|x_k|\left(\textrm{E}|\mathbf{x}_n^*\mathbf{w}_k|^2\textrm{E}|\xi_k|^4\right)^{1/2}
	\le\dfrac{C}{nv}(1+\Delta/v).
\end{eqnarray*}

Therefore, from the upper bounds of $|L_{11}|$ and $|L_{12}|$, we conclude that for $z\in\cD$,
\begin{eqnarray}\label{L1}
|L_1|\le\dfrac{C}{\sqrt{n}}(1+\Delta^H/v).
\end{eqnarray}

As in dealing with $L_{21}$,
by using equality $\al_k=\ga_k-z^{-1}\al_k\ga_k\xi_k$ three times, we further decompose term  $L_{21}$ as follows,
\begin{eqnarray*}
	L_{21}&=&\sum_{k=1}^n\textrm{E}\mathbf{x}_n^*\mathbf{w}_k\mathbf{w}_k^*\mathbf{A}_k^{-1}(z)\mathbf{x}_n(\alpha_k+zs(z))z^{-1}\\
	&=&\sum_{k=1}^n\textrm{E}\mathbf{x}_n^*\mathbf{w}_k\mathbf{w}_k^*\mathbf{A}_k^{-1}(z)\mathbf{x}_n(\gamma_k+zs(z))z^{-1}
	-\sum_{k=1}^n\textrm{E}\mathbf{x}_n^*\mathbf{w}_k\mathbf{w}_k^*\mathbf{A}_k^{-1}(z)\mathbf{x}_nz^{-2}\alpha_k\gamma_k\xi_k\\
	&=&\sum_{k=1}^n\textrm{E}\mathbf{w}_k^*\mathbf{A}_k^{-1}(z)\mathbf{x}_n\mathbf{x}_n^*\mathbf{w}_k(\gamma_k+zs(z))z^{-1}
	-\sum_{k=1}^n\textrm{E}\mathbf{w}_k^*\mathbf{A}_k^{-1}(z)\mathbf{x}_n\mathbf{x}_n^*\mathbf{w}_kz^{-2}\gamma_k^2\xi_k\\
	&&+\sum_{k=1}^n\textrm{E}\mathbf{w}_k^*\mathbf{A}_k^{-1}(z)\mathbf{x}_n\mathbf{x}_n^*\mathbf{w}_kz^{-3}\gamma_k^3\xi_k^2
	+\sum_{k=1}^n\textrm{E}\mathbf{w}_k^*\mathbf{A}_k^{-1}(z)\mathbf{x}_n\mathbf{x}_n^*\mathbf{w}_kz^{-4}\alpha_k\gamma_k^3\xi_k^3\\
	&:=&L_{211}+L_{212}+L_{213}+L_{214}.
\end{eqnarray*}

For the first term $L_{211}$, notice that $\fw_k$ is independent with $\fA_k^{-1}(z)$ and $\ga_k$, then by Cauchy-Schwarz inequality, \eqref{EsnH_bdd}, and Lemma \ref{asz}, we obtain
\begin{eqnarray*}
	|L_{211}|&=&\lj\sum_{k=1}^n\E\fw_k^*\fA_k^{-1}(z)\fx_n\fx_n^*\fw_k(\ga_k+zs(z))z^{-1}\rj\\
	&=&\left|n^{-1}\sum_{k=1}^n\textrm{E}\mathbf{x}_n^*\mathbf{A}_k^{-1}(z)\mathbf{x}_n(\gamma_k+zs(z))z^{-1}\right.\\
	&&~~~~~~~~~~~~~~~~~~~~~~~~~~~
	\left.+n^{-1}\sum_{k=1}^n\textrm{E}\mathbf{e}_k^*\mathbf{A}_k^{-1}(z)\mathbf{x}_n\mathbf{x}_n^*\mathbf{e}_k(\gamma_k+zs(z))z^{-1}\right|\\
	&\le&n^{-1}\sum_{k=1}^n\left(\textrm{E}|\mathbf{x}_n^*\mathbf{A}_k^{-1}(z)\mathbf{x}_n|^2
	\times \E\lj\(\gamma_k+zs(z)\) z^{-1}\rj^2\right)^{1/2}\\
	&&~~~~~~~~~~~~~~~~~~~~~~~~~~~~~
	+\dfrac{1}{zn}\sum_{k=1}^n|x_k|^2\left(\textrm{E}
	\lj\(\gamma_k+zs(z)\)z^{-1}\rj^2\right)^{1/2}\\
	&\le&\dfrac{C}{nv}(1+\Delta^H/v)+\dfrac{C}{(nv)^2}\\
	&\le&\dfrac{C}{nv}(1+\Delta^H/v).
\end{eqnarray*}

For the second term $L_{212}$.
Observe that
\[
\textrm{E}\(\mathbf{x}_n^*\mathbf{A}_k^{-1}(z)\mathbf{x}_n-z^{-1}|x_k|^2\)z^{-2}\gamma_k^2\xi_k=0.
\]
In view of the definition $\xi_k$, we apply Lemma \ref{xabx} and \ref{agb} to obtain
\begin{eqnarray*}
	|L_{212}|&=&\lj\sum_{k=1}^n\E \(\fw_k^*\fA_k^{-1}(z)\fx_n\fx_n^*\fw_k -\dfrac{1}{n}\(\fx_n^*\fA_k^{-1}(z)\fx_n-z^{-1}|x_k|^2\)\) \right.\\
	&&~~~\left.\times z^{-2}\ga_k^2\(\fw_k^*\fA_k^{-1}(z)\fw_k -\dfrac{1}{n}\(\tr\fA_k^{-1}(z)+z^{-1}\)\) \rj \\
	&=&\bigg|\dfrac{1}{n^2}\sum_{k=1}^n \bigg[ \(\E|X_{12}|^4-|\E X_{12}|^2-2\) \E\sum_{i=1}^n (\fA_k^{-1}(z)\fx_n\fx_n^*)_{ii}(\fA_k^{-1}(z))_{ii}z^{-2}\ga_k^2 \\
	&&~~ +|\E X_{12}^2|^2 \E\tr\(\fA_k^{-1}(z)\fx_n\fx_n^*\fA_k^{-1}(\bar{z})\)z^{-2}\ga_k^2 \\
	&&~~~ +\E\tr\(\fA_k^{-1}(z)\fx_n\fx_n^*\fA_k^{-1}(z)\)z^{-2}\ga_k^2 ~~ \bigg]\bigg|\\
	&\le&\dfrac{C}{n^2}\sum_{k=1}^n \(\E\sum_{i=1}^n|\fe_i^*\fA_k^{-1}(z)\fx_n|^2\)^{1/2} \(\sum_{i=1}^n|\fx_n^*\fe_i|^2\E|(\fA_k^{-1}(z))_{ii}|^2\)^{1/2} \\ &&~~~+\dfrac{C}{nv}(1+\De^H/v)\\
	&\le& \dfrac{C}{nv}(1+\De^H/v).
\end{eqnarray*}

For the third term $L_{213}$, first decompose $L_{213}$ as
\[
L_{213}=L_{213}^{(1)}+L_{213}^{(2)}+L_{213}^{(3)},
\]
where
\begin{eqnarray*}
	L_{213}^{(1)}&:=&\sum_{k=1}^n\E (z^{-1}\ga_k)^3 \xi_k^2
	\(\fw_k^*\fA_k^{-1}(z)\fx_n\fx_n^*\fw_k-\dfrac{1}{n}\(\fx_n^*\fA_k^{-1}(z)\fx_n-z^{-1}|x_k|^2\)\);\\
	L_{213}^{(2)}&:=&\dfrac{1}{n}\sum_{k=1}^n\E(z^{-1}\ga_k)^3\xi_k^2(\fx_n^*\fA_k^{-1}(z)\fx_n);\\
	L_{213}^{(3)}&:=&-\dfrac{z^{-1}}{n}\sum_{k=1}^n|x_k|^2\E(z^{-1}\ga_k)^3\xi_k^2.
\end{eqnarray*}
Using the same proof strategy as shown (4.5) in \cite{xia2013convergence}, one can similarly prove that
\[
|L_{213}^{(1)}|\le \dfrac{C}{nv}(1+\De^H/v).
\]
By Cauchy-Schwarz inequality, and Lemma \ref{agb}, \ref{xi}, it leads to
\begin{eqnarray*}
	|L_{213}^{(2)}|&\le&\dfrac{C}{n}\sum_{k=1}^n\(\E|\xi_k|^4\)^{1/2}
	\(\E|\fx_n^*\fA_k^{-1}(z)\fx_n|^2\)^{1/2}\\
	&\le&\dfrac{C}{nv}(1+\De/v)(1+\De^H/v).
\end{eqnarray*}
And
\[
|L_{213}^{(3)}|~\le~\dfrac{C}{n|z|}\sum_{k=1}^n|x_k|^2\E|\xi_k^2|
~\le~\dfrac{C}{(nv)^2}(1+\De/v).
\]
Thus for $z\in\cD$, we conclude that
\begin{eqnarray*}
	|L_{213}|\le\dfrac{C}{nv}(1+\De^H/v).
\end{eqnarray*}

For the last term $L_{214}$, by Cauchy-Schwarz inequality and Lemma \ref{xtrx},\ref{xi},
it follows that
\begin{eqnarray*}
	|L_{214}|&\le& C\sum_{k=1}^n \bigg[
	\(\E|\fw_k^*\fA_k^{-1}(z)\fx_n\fx_n^*\fw_k-\dfrac{1}{n}\fx_n^*\fA_k^{-1}(z)\fx_n|^2
	+\dfrac{1}{n^2}\E|\fx_n^*\fA_k^{-1}(z)\fx_n|^2\)^{1/2} \\
	&&~~~~~~~~~~~~~ \times \(\E|\xi_k|^6\)^{1/2}\bigg]\\
	&\le&\dfrac{C}{n^{3/2}v^2}(1+\De^H/v)^{1/2}(1+\De/v)^{3/2}.
\end{eqnarray*}
Therefore, along with all the upper bounds of $L_{211},L_{212},L_{213},L_{214}$,
we obtain that for $z\in\cD$,
\begin{eqnarray}\label{L21}
|L_{21}|\le\dfrac{C}{nv}(1+\De^H/v).
\end{eqnarray}

We now turn to term $L_{22}$.
\begin{eqnarray*}
	L_{22}&=&-\dfrac{s(z)}{n}\sum_{k=1}^n\textrm{E}\mathbf{x}_n^*\mathbf{A}^{-1}(z)\mathbf{w}_k \mathbf{e}_k^*\mathbf{A}_k^{-1}(z)\mathbf{x}_n
	-\dfrac{s(z)}{n}\sum_{k=1}^n\textrm{E}\mathbf{x}_n^*\mathbf{A}^{-1}(z)\mathbf{e}_k \mathbf{w}_k^*\mathbf{A}_k^{-1}(z)\mathbf{x}_n\\
	&:=&L_{221}+L_{222}.
\end{eqnarray*}
Appealing to \eqref{Prop_Ak}, \eqref{Awk} and \eqref{Ak1}, one can see that
\begin{eqnarray*}
	L_{221}&=&-\dfrac{s(z)}{n}\sum_{k=1}^n\textrm{E}\mathbf{x}_n^*\mathbf{A}^{-1}(z)\mathbf{w}_k \(\mathbf{e}_k^*\mathbf{A}_k^{-1}(z)\mathbf{x}_n\)\\
	&=&\dfrac{s(z)}{zn}\sum_{k=1}^nx_k\textrm{E}\mathbf{x}_n^*\mathbf{A}^{-1}(z)\mathbf{w}_k\\
	&=&\dfrac{s(z)}{zn}\sum_{k=1}^nx_k\textrm{E}\mathbf{x}_n^*\mathbf{A}_{k1}^{-1}(z)\mathbf{w}_k\alpha_k\\
	&=&\dfrac{s(z)}{zn}\sum_{k=1}^nx_k\left(\textrm{E}\mathbf{x}_n^*\mathbf{A}_k^{-1}(z)\mathbf{w}_k\alpha_k
	-\textrm{E}\mathbf{x}_n^*\mathbf{A}_k^{-1}(z)\mathbf{e}_k\mathbf{w}_k^*\mathbf{A}_k^{-1}(z)\mathbf{w}_k\alpha_k\right)\\
	&=&\dfrac{s(z)}{zn}\sum_{k=1}^nx_k\textrm{E}\mathbf{x}_n^*\mathbf{A}_k^{-1}(z)\mathbf{w}_k\alpha_k
	-\dfrac{s(z)}{z^2n}\sum_{k=1}^n|x_k|^2\textrm{E}\mathbf{w}_k^*\mathbf{A}_k^{-1}(z)\mathbf{w}_k\alpha_k.
\end{eqnarray*}
For $|z^{-1}\alpha_k|\le C$ by Lemma \ref{agb}, Cauchy-Schwarz inequality leads us to
\begin{eqnarray*}
	&&\left|\dfrac{s(z)}{zn}\sum_{k=1}^nx_k\textrm{E}\mathbf{x}_n^*\mathbf{A}_k^{-1}(z)\mathbf{w}_k\alpha_k\right|
	\le \dfrac{C}{n}\sum_{k=1}^n|x_k|\left(\textrm{E}
	\lj\mathbf{x}_n^*\mathbf{A}_k^{-1}(z)\mathbf{w}_k\rj^2\right)^{1/2}\\
	&=&\dfrac{C}{n}\sum_{k=1}^n|x_k|\(\E\(\fw_k^*\fA_k^{-1}(\bar{z})\fx_n \fx_n^*\fA_k^{-1}(z)\fw_k\)\)^{1/2}\\
	&=&\dfrac{C}{n}\sum_{k=1}^n|x_k| \(\dfrac{1}{n}\E \(\fx_n^*\fA_k^{-1}(z)\fA_k^{-1}(\bar{z})\fx_n\)-\dfrac{|x_k|^2}{n|z|^2}\)^{1/2}\\
	&=&\dfrac{C}{n}\sum_{k=1}^n|x_k| \(\dfrac{1}{nv}\Im\E \(\fx_n^*\fA_k^{-1}(z)\fx_n\)-\dfrac{|x_k|^2}{n|z|^2}\)^{1/2}\\
	&\le&\dfrac{C}{n\sqrt{v}}(1+\Delta^H/v)^{1/2},
\end{eqnarray*}
and
\begin{eqnarray*}
	&&\lj\dfrac{s(z)}{z^2n}\sum_{k=1}^n|x_k|^2\E \fw_k^*\fA_k^{-1}(z)\fw_k\al_k \rj\\
	&\le&\dfrac{C}{nv}\sum_{k=1}^n|x_k|^2\(\E\lj\fw_k^*\fA_k^{-1}(z)\fw_k\rj^2\)^{1/2}\\
	&\le&\dfrac{C}{nv}\sum_{k=1}^n|x_k|^2
	\(\E\lj\fw_k^*\fA_k^{-1}(z)\fw_k-n^{-1}\tr\fA_k^{-1}\rj^2
	+\E\lj n^{-1}\tr\fA_k^{-1}(z)\rj^2\)^{1/2}\\
	&\le&\dfrac{C}{nv},
\end{eqnarray*}
where the last inequality is due to Lemma \ref{xtrx}, \ref{6.7}, \ref{Delta1}.

Thus
\[
|L_{221}|\le\dfrac{C}{n\sqrt{v}}(1+\De^H/v)^{1/2}+\dfrac{C}{nv}.
\]
Similarly one can prove that $|L_{222}|$ has the same upper bound as $|L_{221}|$.
Thus we obtain
\begin{eqnarray}\label{L22}
|L_{22}|\le\dfrac{C}{n\sqrt{v}}(1+\De^H/v)^{1/2}+\dfrac{C}{nv}.
\end{eqnarray}

As it has been shown in \eqref{bdd_EST}, \eqref{L1}, \eqref{L21} and \eqref{L22}, we conclude
that for $z\in\cD$,
\begin{eqnarray*}\label{EST}
	|\E s_n^H(z)-s(z)|&\le&|L_1|+|L_{21}|+|L_{22}|+|L_{23}|\\
	&\le&\dfrac{C}{nv}(1+\De^H/v).
\end{eqnarray*}

Note that
\[
\lj F(x+h)-F(x)\rj=\lj\dfrac{1}{2\pi}\int_x^{x+h}\sqrt{4-t^2}dt\rj\le\dfrac{1}{\pi}|h|.
\]
It is easy to verify that
\[
v^{-1}\sup_x\int_{|h|\le 2v\tau} |F(x+h)-F(x)|dh \le Cv,
\]
where $\tau$ is a constant.

Since the support of semicircle law is $[-2,2]$, by choosing $B=3$ in
inequality \eqref{Bai_inequality}  and from Lemma \ref{tail}, we can show that for any $t>0$,
\[
\int_{|x|>3}\lj\E H^{\fW_n}(x)-F(x)\rj dx=o(n^{-t}).
\]
Along with Lemma \ref{Bai_lem} and choose $A=16$ in inequality \eqref{Bai_inequality},
then we have
\begin{eqnarray*}
	\Delta^H&=&\|\textrm{E}H^{W_n}-F\|
	\leq  C\bigg(\int_{-16}^{16} |\textrm{E}s_n^H(z)-s(z)|du  \\
	&&+\frac{2\pi}{v}\int_{|x|>3} |\textrm{E}H^{W_n}(x)-F(x)|dx
	+\frac{1}{v}\sup_x \int_{|h|\leq 2v\tau} |G(x+h)-G(x)|dh\bigg)\\
	&\le& \dfrac{C}{nv}(1+\Delta^H/v)+Cv.
\end{eqnarray*}
For any $z\in\cD$. That is there exists a large constant $C_0$, such that
$v\geq C_0n^{-1/2}$ and $\dfrac{C}{nv^2}<\dfrac{2}{3}$. It follows that
\[
\De^H \le \dfrac{3C}{nv}+3Cv ~=~ O(n^{-1/2}).
\]
The proof of Theorem \ref{thm1} is complete.

\section{Proof of Theorem \ref{thm2} and Theorem \ref{thm3}}\label{proof23}

The convergence rate $\De^H=O(n^{-1/2})$ has already been established in Theorem \ref{thm1},
now we only need to focus on the upper bound of $\E|s_n^H(z)-\E s_n^H(z)|$.
By Lemma \ref{mem} and Cauchy-Schwarz inequality, it follows that
\[
\E\lj s_n^H(z)-\E s_n^H(z)\rj \le \dfrac{C}{\sqrt{n}v}(1+\De^H/v).
\]
Therefore, for $A=16$ and $z\in\cD^*:=\{z=u+iv:u>0,v\geq Cn^{-1/4}\}$, by Lemma \ref{Bai_lem},
we conclude that
\begin{eqnarray*}
	\E\De_p^H&:=&\E\|H^{\fW_n}(x)-F(x)\|\\
	&\le& \E\|H^{\fW_n}(x)-\E H^{\fW_n}(x)\|+\|\E H^{\fW_n}(x)-F(x)\|\\
	&\le&C\int_{-16}^{16}\E\lj s_n^H(z)-\E s_n^H(z) \rj du+\De^H\\
	&\le&\dfrac{C}{\sqrt{n}v}(1+\De^H/v)+\De^H
	~=~ O(n^{-1/4}).
\end{eqnarray*}
This completes the proof of Theorem \ref{thm2}.

The proof of Theorem \ref{thm3} can be proved almost the same as that of Theorem \ref{thm2}.
We need to show that
\begin{eqnarray}\label{thm3_bdd}
\int_{-16}^{16} \E\lj s_n^H(z)-\E s_n^H(z)\rj du=O_{a.s.}(n^{-1/6}).
\end{eqnarray}
By Lemma \ref{mem} and choosing $v=n^{-1/6}$, it follows that for $l\geq 8$,
\[
n^{2l/6}\E\lj s_n^H(z)-\E s_n^H(z)\rj^{2l}
\le \dfrac{Cn^{l/3}}{n^2(nv^2)^{l/2}}=\dfrac{C}{n^2}.
\]
Therefore, \eqref{thm3_bdd} follows by Borel-Cantelli lemma. The proof of Theorem \ref{thm3} is complete.

\section{Appendix I}\label{Appendix1}
\begin{lem}\label{xi}
	Under the conditions of Theorem \ref{thm1}, when $z\in\cD$, there exists a constant $C$, such that for any $l\geq 1$,
	\[
	\textrm{E}|\xi_k|^{2l}\le
	\begin{cases} \dfrac{C}{n^lv^l}\left(1+\dfrac{\Delta}{v}\right)^l, \quad  & \mbox{for} \quad l\le 3,\\
	\dfrac{C}{n^{l+1}v^{2l-1}}\left(1+\dfrac{\Delta}{v}\right),\quad & \mbox{for} \quad l\geq 4.
	\end{cases}
	\]
	where $\Delta=\|\E F^{\fW_n}-F\|$.
\end{lem}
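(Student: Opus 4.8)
The quantity $\xi_k=\fw_k^*\fA_k^{-1}\fw_k-\frac1n(\tr\fA_k^{-1}+z^{-1})$ is a centered quadratic form in the independent entries of the $k$th column $\fw_k$, which is independent of $\fA_k^{-1}$. The plan is to condition on $\fA_k^{-1}=(a_{ij})$ and apply the standard moment bound for quadratic forms (the Burkholder/Bai--Silverstein inequality, which should appear as one of the lemmas in Appendix II): for a random vector with independent mean-zero entries of variance $1/n$ and bounded high moments,
\[
\E\Bigl|\fw_k^*\fA_k^{-1}\fw_k-\tfrac1n\tr\bigl((\indic-\fe_k\fe_k^*)\fA_k^{-1}\bigr)\Bigr|^{2l}
\le \frac{C}{n^{2l}}\Bigl[\bigl(\tr\fA_k^{-1}(\fA_k^{-1})^*\bigr)^{l}+n^{l}\E|X_{11}|^{2l}\cdot\tfrac1n\|\diag(\fA_k^{-1})\|_{2l}^{2l}\cdot\text{(truncation factor)}\Bigr].
\]
After noting that $\frac1n\tr((\indic-\fe_k\fe_k^*)\fA_k^{-1})=\frac1n(\tr\fA_k^{-1}+z^{-1})$ up to the $(k,k)$ correction already absorbed in the definition of $\xi_k$, the whole problem reduces to bounding two deterministic-looking spectral quantities of $\fA_k^{-1}$: the Hilbert--Schmidt norm $\tr\fA_k^{-1}(\fA_k^{-1})^*=\sum_{ij}|a_{ij}|^2$ and the diagonal sum $\sum_i|a_{ii}|^{2l}$ (the latter coming from the fourth-cumulant / diagonal term in the quadratic-form inequality, amplified by the truncation level $|X_{ij}|\le\ep_n n^{1/4}$ when $l$ is large).

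The first term is controlled via the identity $\tr\fA_k^{-1}(\fA_k^{-1})^*=\frac1v\Im\tr\fA_k^{-1}$, and $\frac1n\Im\tr\fA_k^{-1}=\Im s_{nk}(z)$ differs from $\Im s(z)$ by at most $C\De/v$ (using $|\E s_n-s|\le\pi\De/v$ together with a rank-one perturbation bound relating $s_{nk}$ to $s_n$, and $\Im s(z)\le C$ on $\cD$); hence $\frac1n\tr\fA_k^{-1}(\fA_k^{-1})^*\le \frac{C}{v}(1+\De/v)$, giving the leading factor $\bigl(\tr\fA_k^{-1}(\fA_k^{-1})^*\bigr)^l\le (Cn/v)^l(1+\De/v)^l$ and, divided by $n^{2l}$, exactly the claimed $\frac{C}{n^lv^l}(1+\De/v)^l$. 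For $l\le 3$ this diagonal/fourth-cumulant term is dominated by the Hilbert--Schmidt term (since each $|a_{ii}|\le v^{-1}$ and $n\cdot v^{-2l}$ is no worse), so the first branch follows. For $l\ge 4$ the truncation bound $|X_{ij}|\le\ep_n n^{1/4}$ makes the higher-moment contribution scale differently: one pulls out $|X_{ij}|^{2l-4}\le(\ep_n n^{1/4})^{2l-4}$, keeps a fourth moment, and is left with $\frac{1}{n^{2l}}\cdot(\ep_n n^{1/4})^{2l-4}\cdot n\cdot\sum_i|a_{ii}|^4\cdot(\text{Hilbert--Schmidt})^{\,l-2}$-type expressions; bookkeeping with $|a_{ii}|\le v^{-1}$, $\sum_{ij}|a_{ij}|^2\le Cn v^{-1}(1+\De/v)$, and $\ep_n\to0$ collapses this to $\frac{C}{n^{l+1}v^{2l-1}}(1+\De/v)$, which is the second branch.

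The main obstacle is the second branch ($l\ge4$): getting the powers of $n$, $v$, and the truncation level $\ep_n n^{1/4}$ to line up requires choosing the split between "moments absorbed into the $\ep_n n^{1/4}$ bound" and "moments kept finite" correctly, and then verifying that the $\sum_i |a_{ii}|^{2l}$ term — crudely bounded by $n v^{-2l}$ — does not dominate the target $n^{-l-1}v^{-2l+1}$; this needs the extra gain from $v\ge C_0 n^{-1/2}$ (so $nv^2\ge C_0^2$, i.e. $v^{-1}\le C n v$) and the vanishing of $\ep_n$ to kill a residual factor $n^{(l-2)/2}\ep_n^{2l-4}$. The first branch and the linear ($l=1$) case are routine once the quadratic-form inequality and the $\frac1v\Im s$ identity are in place. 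Throughout, the reduction from $\fA_k^{-1}$ back to the full resolvent $\fA^{-1}$ (to invoke the already-established $\De$-bounds) is handled by the rank-two perturbation formula \eqref{AAk}, contributing only lower-order corrections of size $O((nv)^{-1})$.
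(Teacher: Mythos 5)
Your overall strategy is the same as the paper's: condition on $\fA_k^{-1}$, apply the quadratic-form moment bound of Lemma \ref{xtrx}, control the Hilbert--Schmidt term through $\tr\fA_k^{-1}(z)\fA_k^{-1}(\bar z)=v^{-1}\Im\tr\fA_k^{-1}(z)$, and convert $\Im\tr\fA_k^{-1}$ into $(1+\De/v)$ via $|\E s_n-s|\le\pi\De/v$. However, your treatment of the second (high-moment) term is where the proof actually lives, and as written it would fail. Lemma \ref{xtrx} with $p=2l$ gives the second term as $\E|X_{12}|^{4l}\,\tr\bigl(\fA_k^{-1}(z)\fA_k^{-1}(\bar z)\bigr)^{l}$, not a diagonal sum $\sum_i|a_{ii}|^{2l}$ weighted by $\E|X_{12}|^{2l}$; your diagonal-only substitute is not implied by (indeed is weaker in form than, and differently normalized from) the actual inequality, and the bookkeeping built on it does not close. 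Concretely: for the first branch you claim the high-moment term is dominated ``since each $|a_{ii}|\le v^{-1}$,'' but with that crude bound the term is of order $n^{-(l+1)}v^{-2l}$ (or worse with your normalization), which for $l=2,3$ and $v\asymp n^{-1/2}$ exceeds the target $n^{-l}v^{-l}(1+\De/v)^l$ by positive powers of $n$; and for $l\ge4$ your own accounting leaves a residual factor $n^{(l-2)/2}\ep_n^{2l-4}$ that you propose to ``kill'' using $\ep_n\to0$ --- but $\ep_n$ may decrease arbitrarily slowly (e.g.\ $\ep_n=1/\log n$), so this factor is in general unbounded.

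The fix, which is exactly the paper's proof, needs no diagonal sums and no appeal to $\ep_n\to0$: bound the moment factor by truncation plus the finite 8th moment, $\E|X_{12}|^{4l}\le(\ep_n n^{1/4})^{4l-8}\E|X_{12}|^{8}\le Cn^{l-2}$, and bound the matrix factor by
\[
\tr\bigl(\fA_k^{-1}(z)\fA_k^{-1}(\bar z)\bigr)^{l}\le v^{-2(l-1)}\tr\fA_k^{-1}(z)\fA_k^{-1}(\bar z)=v^{-(2l-1)}\Im\tr\fA_k^{-1}(z),
\]
so that after dividing by $n^{2l}$ the second term is at most $C n^{-(l+1)}v^{-(2l-1)}(1+\De/v)$ for every $l$; together with the Hilbert--Schmidt term this gives the sum $\frac{C}{n^lv^l}(1+\De/v)^l+\frac{C}{n^{l+1}v^{2l-1}}(1+\De/v)$, from which the two branches of the lemma are read off on $\cD$ (for $l\le3$ the second summand is absorbed since $nv^{l-1}\ge C_0^{\,l-1}n^{(3-l)/2}$). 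Note also that the sharper bound $v^{-(2l-1)}\Im\tr\fA_k^{-1}$, rather than the crude $nv^{-2l}$, is genuinely needed already at $l=3$; your sketch never isolates this point, and it is the step that makes the exponents match.
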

\begin{proof}
	For Stieltjes transform $s_n(z)$, integration by parts yields
	\[|\E s_n(z)-s(z)|\le \pi\De/v.\]
	Combined with the fact $|s(z)|\le 1$, implies that  $|\E s_n(z)|\le C(1+\De/v)$.
	Under finite 8th moment assumption, we have $\E|X_{jk}|^{4l}\le Cn^{l-2}$. Thus by Lemma \ref{xtrx}, it follows that
	\begin{eqnarray*}
		\E|\xi_k|^{2l}&\le&\dfrac{C}{n^{2l}}\lb\(\E\tr\fA_k^{-1}(z)\fA_k^{-1}(\bar{z})\)^l
		+n^{l-2}\E\tr\(\fA_k^{-1}(z)\fA_k^{-1}(\bar{z})\)^l \rb \\
		&=&\dfrac{C}{n^{2l}}\lb \(v^{-1}\E\(\Im\tr\fA_k^{-1}(z)\)\)^l
		+n^{l-2}\E\tr\(\fA_k^{-1}(z)\fA_k^{-1}(\bar{z})\)^l \rb \\
		&\le&\dfrac{C}{n^{2l}}\lb \dfrac{n^l}{v^l}|\E\Im(s_n(z))|^l +\dfrac{n^{l-1}}{v^{2l-1}} \E|\Im(s_n(z))|\rb\\
		&\le&\dfrac{C}{n^lv^l}(1+\De/v)^l+\dfrac{C}{n^{l+1}v^{2l-1}}(1+\De/v).
	\end{eqnarray*}
\end{proof}

\begin{lem}\label{agb}
	Under the conditions of Theorem \ref{thm1}, for all $z\in\cD$, there exists a large constant $C>0$, such that for any $t>0$ and for all $k=1,\cdots,n$, we have
	\[
	P(|z^{-1}\ga_n|>C)=o(n^{-t}), ~~~~~ P(|z^{-1}\al_k|>C)=o(n^{-t}).
	\]
\end{lem}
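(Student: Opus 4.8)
The plan is to rewrite $z^{-1}\ga_k$ and $z^{-1}\al_k$ as reciprocals of resolvent-type scalars and then to bound those scalars below by a fixed positive constant with probability $1-o(n^{-t})$. Using the definitions of $\al_k,\ga_k,\xi_k$ and the identity $(\fA_k^{-1}(z))_{kk}=-z^{-1}$ from \eqref{Prop_Ak},
\[
z^{-1}\ga_k=\frac1{z+g_k(z)},\qquad z^{-1}\al_k=\frac1{z+\fw_k^*\fA_k^{-1}(z)\fw_k}=\frac1{z+g_k(z)+\xi_k},
\]
where $g_k(z):=\tfrac1n\bigl(\tr\fA_k^{-1}(z)+z^{-1}\bigr)$ is $\tfrac1n$ times the trace of the resolvent of the $(n-1)\times(n-1)$ principal submatrix of $\fW_n$ obtained by deleting row and column $k$. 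Since $\fW_n$ and $\fW_{nk}$ differ by a perturbation of rank $\le2$, the rank inequality for traces of resolvents yields the deterministic identity $g_k(z)=s_n(z)+O((nv)^{-1})$. It therefore suffices to prove, for each fixed $z\in\cD$ and every $t>0$, that $|z+g_k(z)|$ and $|z+g_k(z)+\xi_k|$ both exceed a fixed positive constant off an event of probability $o(n^{-t})$; the lemma follows with a suitable $C$. I would also record at the outset the cheap deterministic bounds $|z^{-1}\ga_k|\le v^{-1}$ and $|z^{-1}\al_k|\le v^{-1}$, which come from $\Im g_k(z)\ge0$ and $\Im\bigl(\fw_k^*\fA_k^{-1}(z)\fw_k\bigr)=v\|\fA_k^{-1}(\bar z)\fw_k\|^2\ge0$, forcing $\Im(z+g_k(z))\ge v$ and $\Im(z+g_k(z)+\xi_k)\ge v$; on $\cD$ these are $\le C_0^{-1}n^{1/2}$, and this polynomial bound is what one falls back on over the complementary events whenever Lemma \ref{agb} is applied inside an expectation.

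The core is the $\ga_k$ estimate, which reduces to showing that $g_k(z)$ is close to $s(z)$ with overwhelming probability. Split $s_n(z)-s(z)=(\E s_n(z)-s(z))+(s_n(z)-\E s_n(z))$. For the first term, integration by parts gives $|\E s_n(z)-s(z)|\le\pi\De/v$; by Lemma \ref{Delta1}, $\De=O(n^{-1/2})$ (or better), so on $\cD$ this is $O(C_0^{-1})$, arbitrarily small once $C_0$ is large. For the fluctuation I would use the standard moment bound $\E|s_n(z)-\E s_n(z)|^{2m}\le C_m(nv)^{-m}$ for truncated Wigner matrices (obtained from the column-martingale decomposition together with $\E_{k-1}|(\E_k-\E_{k-1})s_n(z)|^{2m}\le C_m(n^2v)^{-m}$ and a Burkholder/Rosenthal inequality, the truncation $|X_{ij}|\le\ep_nn^{1/4}$ entering to control the conditional moments of the martingale differences) with $m=\lceil\log n\rceil$: since $(nv)^{-m}\le(C_0n^{1/2})^{-m}$ on $\cD$ and $C_m$ grows at most like $m^m$, Markov's inequality gives $P(|s_n(z)-\E s_n(z)|>\de)=o(n^{-t})$ for every $t$. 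Consequently, off an event of probability $o(n^{-t})$, $|z+g_k(z)-(z+s(z))|<\tfrac14$; and by \eqref{sz}, $z+s(z)=-1/s(z)$, of modulus $1/|s(z)|\ge1$ because $|s(z)|\le1$. Hence $|z+g_k(z)|\ge\tfrac34$ there, i.e.\ $P(|z^{-1}\ga_k|>2)=o(n^{-t})$; and since the $X_{ij}$ are i.i.d., the $(n-1)$-dimensional submatrix obtained by deleting row/column $k$ has the same law for every $k$, so this holds for all $\ga_k$, in particular for $\ga_n$.

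For $\al_k$ it remains to show $P(|\xi_k|>\de)=o(n^{-t})$. Conditionally on $\fA_k$, which is independent of $\fw_k$, $\xi_k=\tfrac1n\sum_{i,j\ne k}(\bar X_{ik}X_{jk}-\de_{ij})(\fA_k^{-1}(z))_{ij}$ is a centred quadratic form in the bounded independent variables $\{X_{ik}\}_{i\ne k}$; on the good event of the previous paragraph $\Im g_k(z)=O(1)$, so the relevant submatrix $B:=(\fA_k^{-1}(z))_{i,j\ne k}$ satisfies $\|B\|_F^2=\tfrac nv\Im g_k(z)=O(n/v)$ and $\|B\|\le v^{-1}$. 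Bounding the conditional $2m$-th moments of $\xi_k$ by the usual quadratic-form moment inequality and taking $m=\lceil\log n\rceil$, the polynomially growing factors arising from $\|B\|_F^2,\|B\|$ and $|X_{ij}|\le\ep_nn^{1/4}$ are swamped by the factor $\ep_n^{4m}\to0$ (this is precisely the role of choosing the truncation level with $\ep_n\to0$), so $\E|\xi_k|^{2m}$ is super-polynomially small and Markov again gives $o(n^{-t})$. On this event intersected with that of the previous step, $|z+g_k(z)+\xi_k-(z+s(z))|<\tfrac12$, hence $|z+g_k(z)+\xi_k|\ge\tfrac12$ and $P(|z^{-1}\al_k|>2)=o(n^{-t})$. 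Taking $C$ to be the larger of the two constants completes the argument.

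The step I expect to be the main obstacle is exactly the concentration of $s_n(z)$ around $s(z)$ and of $\xi_k$ around $0$, with probability $1-o(n^{-t})$, \emph{uniformly down to} $v=C_0n^{-1/2}$ --- the scale at which $s_n(z)$ starts to resolve individual eigenvalues, where a naive Azuma/Hoeffding estimate only gives a failure probability of constant order and a fixed number of moments only a polynomial rate. Pushing the $o(n^{-t})$ rate through forces one to (i) exploit the preliminary truncation $|X_{ij}|\le\ep_nn^{1/4}$ with $\ep_n\to0$, which supplies the extra smallness needed in the $2m$-th moment estimates with $m\asymp\log n$, and (ii) use the sharp ESD rate $\De$ from Lemma \ref{Delta1} so that $\De/v$ stays small on $\cD$. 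Alternatively one may first establish, with probability $1-o(n^{-t})$ and by the standard self-consistent-equation bootstrap, that $\|\fW_n\|\le 2+\ep_n$ and $\max_k|\xi_k|+\sup_{z\in\cD}|s_n(z)-s(z)|\to0$, after which Lemma \ref{agb} is immediate.
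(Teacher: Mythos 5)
Your proposal is correct and follows the same skeleton as the paper's proof: write $z^{-1}\ga_k$ and $z^{-1}\al_k$ as reciprocals, pass from the full resolvent to the minor by a deterministic rank-two perturbation bound of order $(nv)^{-1}$, control the fluctuation of the trace of the resolvent by high-moment Markov bounds, and kill $\xi_k$ by quadratic-form moments of order $p\asymp\log n$, where the truncation $|X_{ij}|\le\ep_n n^{1/4}$ supplies the factor $\ep_n^{cp}$ (this last step is exactly the paper's use of Lemma \ref{6.8}). The two places where you diverge are worth noting. First, for the ``centering'' of the denominator the paper does not compare with $z+s(z)$: it introduces $b_n=\(1+z^{-1}n^{-1}\E\tr\fA^{-1}(z)\)^{-1}$ and uses the exact identity $|z^{-1}b_n|=|z+\E s_n(z)|^{-1}=|s(z+\de)|\le 1$ (from the self-consistent equation for $\E s_n$), then passes from $b_n$ to $\beta_n$ to $\ga_k$; your route instead uses $|z+s(z)|=1/|s(z)|\ge 1$ together with $|\E s_n(z)-s(z)|\le\pi\De/v$ and Lemma \ref{Delta1}, which forces $C_0$ in $\cD$ to be large --- harmless here, since the paper makes the same choice elsewhere, but the paper's identity gives the bound with no smallness requirement. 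Second, for $P(|s_n-\E s_n|>\de)=o(n^{-t})$ the paper simply quotes Lemma \ref{6.7} with a \emph{fixed} moment order $l>\max\{1,2t\}$, since on $\cD$ that bound is $Cn^{-2l}v^{-3l}\le Cn^{-l/2}$; you instead propose to re-derive a concentration inequality of comparable strength with $m\asymp\log n$ and constants growing like $m^m$, which is more work than needed and requires tracking the $m$-dependence of the Burkholder and quadratic-form constants (not supplied by the paper's quoted lemmas). With fixed $m$ your argument closes just as well, so this is an inefficiency rather than a gap; the deterministic bounds $|z^{-1}\ga_k|,|z^{-1}\al_k|\le v^{-1}$ you record at the outset are a useful addition not made explicit in the paper.
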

\begin{proof}
	Recall that
	\[
	\alpha_k=\dfrac{1}{1+z^{-1}\mathbf{w}_k^*\mathbf{A}_k^{-1}\mathbf{w}_k}, \qquad \gamma_k=\dfrac{1}{1+z^{-1}n^{-1}(\textrm{tr}\mathbf{A}_k^{-1}+z^{-1})}.
	\]
	Define
	\[
	b_n:= \dfrac{1}{1+z^{-1}n^{-1}\textrm{Etr}\mathbf{A}^{-1}(z)}, ~~~~~
	\beta_n:=\dfrac{1}{1+z^{-1}n^{-1}\textrm{tr}\mathbf{A}^{-1}(z)}.
	\]
	First if we suppose that for any fixed $t>0$, there exists a large constant $C>0$, such that the following equation holds,
	\begin{eqnarray}\label{zbeta}
	P(|z^{-1}\beta_n|>C)=o(n^{-t}).
	\end{eqnarray}
	Then from
	\begin{eqnarray}\label{beta_ga}
	\beta_n-\gamma_k=\beta_n\gamma_kz^{-1}n^{-1}\(\tr\fA_k^{-1}(z)-\tr\fA^{-1}(z)+z^{-1}\),
	\end{eqnarray}
	we have
	\begin{eqnarray*}
		|z^{-1}\ga_k|&=&\dfrac{|z^{-1}\beta_n|}{|1+z^{-1}\beta_nn^{-1}\(\tr(\fA_k^{-1}(z)-\fA^{-1}(z))+z^{-1}\)|}\\
		&\le&\dfrac{|z^{-1}\beta_n|}{1-2|z^{-1}\beta_n|(nv)^{-1}},
	\end{eqnarray*}
	where the last inequality comes from the facts that
	$|\tr(\fA_k^{-1}(z)-\fA^{-1}(z))|\le v^{-1}$ and $|z^{-1}|\le v^{-1}$.
	If \eqref{zbeta} is true, for $v\geq O(n^{-1/2})$, we can choose $n$ large enough such that $2|z^{-1}\beta_n|(nv)^{-1}\le 1/2$.
	Then $|z^{-1}\ga_k|\le 2|z^{-1}\beta_n|$, which implies that $P(|z^{-1}\ga_k|>C)=o(n^{-t})$ holds.
	
	Similarly, consider $z^{-1}\al_k$, if $|z^{-1}\ga_k||\xi_k|\le 1/2$ holds, then we have
	\[
	|z^{-1}\alpha_k|=\dfrac{|z^{-1}\gamma_k|}{|1+z^{-1}\gamma_k\xi_k|}
	\le\dfrac{|z^{-1}\gamma_k|}{1-|z^{-1}\gamma_k||\xi_k|}
	\le 2|z^{-1}\gamma_k|,
	\]
	That is, for any $p\geq 1$,
	\begin{eqnarray*}
		P(|z^{-1}\alpha_k|>C)&\le& P(|\xi_k|>1/(2C))\le C\textrm{E}|\xi_k|^p,
	\end{eqnarray*}
	Since $|X_{jk}|\le\ep_n n^{1/4}$, choose $\eta=\ep_nn^{-1/4}$ in Lemma \ref{6.8}, for $p\geq\log n$,
	\begin{eqnarray*}
		\textrm{E}|\xi_k|^p&\le& C(n\epsilon_n^4n^{-1})^{-1}(v^{-1}\epsilon_n^2n^{-1/2})^p
		\le C\epsilon_n^{2p-4}\le C\epsilon_n^p.
	\end{eqnarray*}
	For any fixed $t>0$, when $n$ is large enough so that $\log \epsilon_n^{-1}>t+1$, it can be shown that
	\begin{eqnarray*}
		\textrm{E}|\xi_k|^p&\le&C\exp\{-p\log \epsilon_n^{-1}\}\le C\exp\{-p(t+1)\}\\
		&\le& C\exp\{-(t+1)\log n\}\\
		&=&Cn^{-t-1}=o(n^{-t}).
	\end{eqnarray*}
	This shows that Lemma \ref{agb} is true if we can prove equation \eqref{zbeta}.
	
	From (8.1.13) and (8.1.18) in \cite{Bai2010b}, we can see that
	\[
	|z^{-1}b_n|=\dfrac{1}{|z+\E s_n(z)|}=|-s(z+\de)|,
	\]
	which implies that $|z^{-1}b_n|\le 1$.
	Further from
	\[
	b_n-\beta_n=b_n\beta_nz^{-1}n^{-1}
	\left(\textrm{tr}\mathbf{A}^{-1}(z)-\textrm{Etr}\mathbf{A}^{-1}(z)\right)
	=b_n\beta_nz^{-1}(s_n(z)-\textrm{E}s_n(z)),
	\]
	we obtain
	\begin{eqnarray*}
		|z^{-1}\beta_n|&=&\dfrac{|b_n||z^{-1}b_n|}{|1+z^{-1}b_n(s_n(z)-\textrm{E}s_n(z))|}\\
		&\le&\dfrac{|z^{-1}b_n|}{1-|z^{-1}b_n||s_n(z)-\textrm{E}s_n(z)|}\\
		&\le&2|z^{-1}b_n|\le 2,
	\end{eqnarray*}
	if $|s_n(z)-\E s_n(z)|\le 1/2$.
	Therefore, by Lemma \ref{6.7}, for any $l> \max\{1,2t\}$ and $v\geq O(n^{-1/2})$, we have
	\begin{eqnarray*}
		P(|z^{-1}\beta_n|\geq 2)&\le& P(|s_n(z)-\E s_n(z)|\geq 1/2)\\
		&\le& C\E|s_n(z)-\E s_n(z)|^{2l} \\
		&\le& \dfrac{C}{n^{2l}v^{3l}}=o(n^{-t}).
	\end{eqnarray*}
	This completes the proof.
\end{proof}

\begin{lem}\label{asz}
	Under the conditions of Theorem \ref{thm1}, for any $l\geq 1$ and $z\in\cD$,
	there exists a constant $C>0$, such that
	\[
	\textrm{E}\left|z^{-1}(\gamma_k+zs(z))\right|^{2l}\le\dfrac{C}{(nv)^{2l}}.
	\]
\end{lem}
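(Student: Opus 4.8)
The plan is to reduce the estimate to an averaged local law for the quantity $\tilde s_{nk}(z):=\tfrac1n\big(\tr\fA_k^{-1}(z)+z^{-1}\big)$ that sits inside $\gamma_k$. Note that $\tilde s_{nk}(z)=\E\big[\fw_k^*\fA_k^{-1}(z)\fw_k\mid\fW_{nk}\big]=\tfrac1n\tr(\fW_{(k)}-z\indic_{n-1})^{-1}$, where $\fW_{(k)}$ is the $(n-1)\times(n-1)$ principal submatrix of $\fW_n$ obtained by deleting the $k$th row and column. Since $s(z)$ solves $s^2+zs+1=0$, so that $1+zs(z)=-s^2(z)$, while $z^{-1}\gamma_k=(z+\tilde s_{nk}(z))^{-1}$ directly from the definition of $\gamma_k$, a one‑line computation gives
\[
z^{-1}\big(\gamma_k+zs(z)\big)=\frac{1}{z+\tilde s_{nk}(z)}+s(z)=\frac{s(z)\big(\tilde s_{nk}(z)-s(z)\big)}{z+\tilde s_{nk}(z)} ,
\]
and hence, using $|s(z)|\le1$, $\E\big|z^{-1}(\gamma_k+zs(z))\big|^{2l}\le\E\big(|z+\tilde s_{nk}(z)|^{-2l}\,|\tilde s_{nk}(z)-s(z)|^{2l}\big)$.

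Next I dispose of the denominator. On $\cD$ one has $|s(z)|\asymp1$ (indeed $|s(z)|=1$ for $z\in[-2,2]$), so $|z+s(z)|=|s(z)|^{-1}\asymp1$, while trivially $\Im(z+\tilde s_{nk}(z))\ge v$. On the event $\{|z^{-1}\gamma_k|\le C\}$, which by Lemma~\ref{agb} has probability $1-o(n^{-t})$ for every $t>0$, we have $|z+\tilde s_{nk}(z)|=|z|\,|z^{-1}\gamma_k|^{-1}\ge c$, so there the integrand is $\le C\,|\tilde s_{nk}(z)-s(z)|^{2l}$; on the complementary event the crude bounds $|z+\tilde s_{nk}(z)|\ge v$ and $|\tilde s_{nk}(z)-s(z)|\le Cv^{-1}$ give a contribution $\le Cv^{-4l}o(n^{-t})$, which is $o((nv)^{-2l})$ once $t$ is chosen large (recall $v\ge C_0n^{-1/2}$). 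So it suffices to prove $\E|\tilde s_{nk}(z)-s(z)|^{2l}\le C(nv)^{-2l}$ for $z\in\cD$.

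For this last bound I split $\tilde s_{nk}-s=(\tilde s_{nk}-s_n)+(s_n-\E s_n)+(\E s_n-s)$. Since $\fW_{nk}$ differs from $\fW_n$ by a perturbation of rank at most two, eigenvalue interlacing gives $|\tr\fA_k^{-1}(z)-\tr\fA^{-1}(z)|\le2/v$, hence $|\tilde s_{nk}(z)-s_n(z)|\le3/(nv)$ deterministically. The bias is controlled by the known identity $z^{-1}b_n=-s(z+\delta_n)$ with $|\delta_n|\le C/n$ (cf.\ (8.1.13)--(8.1.18) of \cite{Bai2010b}) together with $|s'(\zeta)|\le C|\zeta^2-4|^{-1/2}$, which yields $|\E s_n(z)-s(z)|\le C/(nv)$ on $\cD$. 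The remaining fluctuation bound $\E|s_n(z)-\E s_n(z)|^{2l}\le C(nv)^{-2l}$ — the sharpened form of Lemma~\ref{6.7} — is obtained from the self‑consistent equation: expanding $s_n=-\tfrac1{nz}\sum_k\alpha_k$ with $\alpha_k=\gamma_k-z^{-1}\alpha_k\gamma_k\xi_k$ and replacing each $\tilde s_{nk}$ by $s_n$ at a deterministic cost $O((nv)^{-1})$ per term produces $s_n^2+zs_n+1=R_n$, with $R_n$ a normalized combination of the $\xi_k$; subtracting $s^2+zs+1=0$ and dividing by $s_n+s+z$ — whose modulus is $\gtrsim|z^2-4|^{1/2}\gtrsim v^{1/2}$ on $\cD$, using $\Re z>0$ and $|z|\le C$ to stay away from $z=-2$ and a continuity‑in‑$v$ argument to keep $s_n$ at the correct root of $t^2+zt+1=0$ — gives $|s_n-s|\le Cv^{-1/2}|R_n|$, and Lemma~\ref{xi} then bounds $\E|R_n|^{2l}$. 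Combining the three pieces gives $\E|\tilde s_{nk}(z)-s(z)|^{2l}\le C(nv)^{-2l}$, which with the two previous steps proves the lemma.

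The main obstacle is this last estimate near $z\approx2$: at the spectral edge the stability factor $|s_n+s+z|=|z^2-4|^{1/2}$ degenerates like $v^{1/2}$, so to keep $\E|\tilde s_{nk}-s|^{2l}$ at the optimal order $(nv)^{-2l}$ — rather than $(nv)^{-2l}v^{-l}$ — one has to exploit the matching square‑root vanishing of the semicircle density there, i.e.\ that each fluctuation $\xi_k$ carries a factor of order $(\Im s(z)/(nv))^{1/2}\asymp v^{1/4}(nv)^{-1/2}$ near the edge, exactly compensating the $v^{-1/2}$ lost on inverting the stability factor.
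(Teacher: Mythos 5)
Your algebraic reduction is sound and is in substance the same first step as the paper's: writing $z^{-1}\ga_k=(z+\tilde s_{nk}(z))^{-1}$ with $\tilde s_{nk}(z):=n^{-1}(\tr\fA_k^{-1}(z)+z^{-1})$ and using \eqref{sz} to get $z^{-1}(\ga_k+zs(z))=s(z)(\tilde s_{nk}(z)-s(z))/(z+\tilde s_{nk}(z))$ parallels the paper's splitting $\ga_k+zs(z)=(\ga_k-\beta_n)+(\beta_n+zs(z))$ via \eqref{beta_ga} and $z^{-1}\beta_n+s(z)=-\dfrac{z^{-1}\beta_n}{z+s(z)}\,(s_n(z)-s(z))$; your handling of the denominator through the high-probability event of Lemma \ref{agb} (note the small slip: $|z+\tilde s_{nk}|=|z^{-1}\ga_k|^{-1}$, not $|z|\,|z^{-1}\ga_k|^{-1}$, which is harmless) and of the rank-two difference $\tilde s_{nk}-s_n=O((nv)^{-1})$ matches the paper's treatment of $\ga_k-\beta_n$.

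The genuine gap is in the final estimate, where you replace the paper's quoted inputs by strictly stronger statements that you do not prove. You need, uniformly on $\cD$, both $|\E s_n(z)-s(z)|\le C(nv)^{-1}$ and $\E|s_n(z)-\E s_n(z)|^{2l}\le C(nv)^{-2l}$. For the bias you invoke ``$z^{-1}b_n=-s(z+\de_n)$ with $|\de_n|\le C/n$,'' but no such bound on $\de_n$ is established here or in the cited places; what the present framework provides is only $|\E s_n(z)-s(z)|\le \pi\Delta/v$ with $\Delta=O(n^{-1/2})$ from Lemma \ref{Delta1}, and a bound of order $n^{-1}$ is essentially the optimal-rate theorem, not a consequence of a derivative estimate for $s$. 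For the fluctuation you explicitly need a sharpening of Lemma \ref{6.7} (which only gives $Cn^{-2l}v^{-4l}(\Delta+v)^l$), and your self-consistent-equation sketch is not closed: as you yourself note, the stability factor $|s_n+s+z|\asymp|z^2-4|^{1/2}$ degenerates like $v^{1/2}$ near the edge, and recovering the lost $v^{-1/2}$ requires an additional edge gain (an $\Im s$ factor in each $\xi_k$) that you state ``one has to exploit'' but never establish. In effect your intermediate target $\E|\tilde s_{nk}(z)-s(z)|^{2l}\le C(nv)^{-2l}$ is an averaged local law at the optimal scale, a substantial result in its own right, whereas the paper's proof of this lemma never goes that route: it stops at the available inputs (Lemma \ref{agb}, \eqref{beta_ga}, Lemma \ref{6.7}, Lemma \ref{Delta1}) and concludes directly. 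So the proposal, as written, does not constitute a complete proof of the stated bound.
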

\begin{proof}
	By \eqref{beta_ga}, \eqref{zbeta}, \eqref{sz} and Lemma \ref{6.7}, \ref{Delta1}, it follows that
	\begin{eqnarray*}
		&&\textrm{E}\left|z^{-1}(\gamma_k+zs(z))\right|^{2l}
		\le C\E\left|z^{-1}(\gamma_k-\beta_n)\right|^{2l} +C\E\left|z^{-1}(\beta_n+zs(z))\right|^{2l}\\
		&&=C\textrm{E}\left|z^{-2}\gamma_k\beta_n n^{-1}(\textrm{tr}\mathbf{A}_k^{-1}(z)+z^{-1}-\textrm{tr}\mathbf{A}^{-1}(z))\right|^{2l}
		+C\textrm{E}\left|z^{-1}\beta_n-\dfrac{1}{z+s(z)}\right|^{2l}\\
		&&\le\dfrac{C}{(nv)^{2l}}+C\textrm{E}\left|\dfrac{z^{-1}\beta_n}{z+s(z)} \left(\dfrac{1}{n}\textrm{tr}\mathbf{A}^{-1}(z)-s(z)\right)\right|^{2l}\\
		&&\le\dfrac{C}{(nv)^{2l}}+C\textrm{E}\left|s_n(z)-\textrm{E}s_n(z)\right|^{2l}+C\left|\textrm{E}s_n(z)-s(z)\right|^{2l}\\
		&&\le\dfrac{C}{(nv)^{2l}}+\dfrac{C}{n^{2l}v^{3l}}+\dfrac{C}{n^l}\\
		&&\le\dfrac{C}{(nv)^{2l}}.
	\end{eqnarray*}
\end{proof}

\begin{lem}\label{mem}
	Under the conditions of Theorem \ref{thm1}, for $z\in\cD$, we have
	\begin{eqnarray*}
		\textrm{E}|s_n^H(z)-\textrm{E}s_n^H(z)|^{2l}\le \dfrac{C}{n^lv^{2l}}\left(1+\dfrac{\Delta^H}{v}\right)^{2l} +\dfrac{C}{n^{l/2+2}v^l}\left(1+\dfrac{\Delta^H}{v}\right)^l.
	\end{eqnarray*}
\end{lem}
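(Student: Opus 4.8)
The plan is to prove Lemma~\ref{mem} by the standard martingale difference decomposition in the rows and columns of $\fW_n$, combined with the Burkholder inequality for martingale differences and the resolvent identities already recorded in Section~\ref{proof1}. Since $s_n^H(z)=\fx_n^*\fA^{-1}(z)\fx_n$, I would write
\[
s_n^H(z)-\E s_n^H(z)=\sum_{k=1}^n\gamma_k,\qquad \gamma_k:=(\E_{k-1}-\E_k)s_n^H(z),
\]
which is a martingale difference sequence. The operator $\E_{k-1}-\E_k$ annihilates any random variable not depending on the $k$-th row and column of $\fW_n$; in particular it annihilates $\fx_n^*\fA_k^{-1}(z)\fx_n$, so one may replace $s_n^H(z)$ by $\fx_n^*(\fA^{-1}(z)-\fA_k^{-1}(z))\fx_n$ inside $\gamma_k$.

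Next I would expand this difference. By \eqref{AAk}, $\fA^{-1}(z)-\fA_k^{-1}(z)=-\fA^{-1}(z)(\fw_k\fe_k^*+\fe_k\fw_k^*)\fA_k^{-1}(z)$, and then \eqref{Prop_Ak}, \eqref{Awk}, \eqref{Aek}, \eqref{ekA}, \eqref{Ak1}, \eqref{Ak2} let $\fx_n^*(\fA^{-1}(z)-\fA_k^{-1}(z))\fx_n$ be written as a fixed-size sum of terms, each a product of a scalar that is bounded with overwhelming probability — namely $z^{-1}\al_k$ by Lemma~\ref{agb}, together with $x_k$, $\bar x_k$ and $z^{-1}$ — and one of the following: a bilinear form $\fw_k^*\fA_k^{-1}(z)\fx_n$ (or its conjugate), a quadratic form $\fw_k^*\fA_k^{-1}(z)\fw_k$, or a product $(\fx_n^*\fA_k^{-1}(z)\fw_k)(\fw_k^*\fA_k^{-1}(z)\fx_n)$. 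Passing $(\E_{k-1}-\E_k)$ through and using $|(\E_{k-1}-\E_k)Y|\le|\E_{k-1}Y|+|\E_kY|$ with Jensen reduces everything to moment estimates of these forms.

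Then I would apply the Burkholder inequality, $\E|\sum_k\gamma_k|^{2l}\le C_l\big(\E(\sum_k\E_k|\gamma_k|^2)^l+\sum_k\E|\gamma_k|^{2l}\big)$. For the terms carrying a quadratic form I would use the centering trick: replacing $\al_k$ by the independent $\ga_k$ and $\fw_k^*\fA_k^{-1}(z)\fw_k$ by its conditional mean $n^{-1}(\tr\fA_k^{-1}(z)+z^{-1})$ leaves a leading piece free of the $k$-th row and column, hence killed by $\E_{k-1}-\E_k$; the remainder is controlled through $\xi_k$ by Lemma~\ref{xi} and through $\al_k-\ga_k=-z^{-1}\al_k\ga_k\xi_k$. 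The product term is treated the same way, the relevant centering being exactly the $\eta_k$ defined in Section~\ref{proof1}. For the mean-zero bilinear forms $\fx_n^*\fA_k^{-1}(z)\fw_k$ I would use the Rosenthal/Burkholder moment bound for sums of independent summands together with the identity $\fx_n^*\fA_k^{-1}(\bar z)\fA_k^{-1}(z)\fx_n=v^{-1}\Im\big(\fx_n^*\fA_k^{-1}(z)\fx_n\big)$ and the estimate \eqref{EsnH_bdd} (and its higher-moment analogue), which is what produces the factor $(1+\Delta^H/v)$. Summing over $k$ using $\sum_k|x_k|^2=1$ and $\sum_k|x_k|\le n^{1/2}$, the conditional-variance sum yields the first stated term and the sum of individual $2l$-th moments yields the second.

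The main obstacle I anticipate is the bookkeeping in the $\sum_k\E|\gamma_k|^{2l}$ term for large $l$: there one must fall back on the truncation bound $|X_{ij}|\le\epsilon_n n^{1/4}$ when estimating high moments of the (bi)linear and quadratic forms — this is the source of the two-regime behaviour already visible in Lemma~\ref{xi} — and one must check that the exceptional event $\{|z^{-1}\al_k|>C\}$, of probability $o(n^{-t})$ by Lemma~\ref{agb}, is negligible once multiplied by the deterministic bound $v^{-1}$ on the remaining factors. Keeping the powers of $v$ and of $(1+\Delta^H/v)$ aligned across the several terms so that they collapse to precisely the two claimed expressions is the delicate point; the individual estimates themselves are routine given Lemmas~\ref{xi}, \ref{agb} and \ref{xtrx}.
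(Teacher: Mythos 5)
Your outline reproduces the paper's route step for step: the same martingale differences $(\E_{k-1}-\E_k)\fx_n^*\fA^{-1}(z)\fx_n$, the same resolvent expansion into the four pieces built from $x_k\,\fx_n^*\fA_k^{-1}(z)\fw_k\al_k$, $|x_k|^2\al_k$, $\bar x_k\,\fw_k^*\fA_k^{-1}(z)\fx_n\al_k$ and $\fx_n^*\fA_k^{-1}(z)\fw_k\fw_k^*\fA_k^{-1}(z)\fx_n\al_k$, the same centerings through $\ga_k,\xi_k,\eta_k$, and Burkholder combined with Lemmas \ref{xtrx}, \ref{xi}, \ref{agb}. The genuine gap is in the one step you pass over with the phrase ``and its higher-moment analogue''. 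After Burkholder and Lemma \ref{xtrx}, the conditional-variance and individual-moment sums are bounded by quantities of the form $\frac{C}{n^lv^{2l}}\E\(\Im s_n^H(z)\)^{2l}$ and $\frac{C}{n^{l/2+2}v^l}\E\(\Im s_n^H(z)\)^{l}$, i.e.\ by \emph{high moments of the random} $\Im s_n^H$. But \eqref{EsnH_bdd} controls only the first moment $\E|s_n^H|$; the bound $\E(\Im s_n^H)^{2l}\le C(1+\De^H/v)^{2l}$ does not follow from integration by parts (which only sees $\E s_n^H$, since $\De^H$ measures the distance of the \emph{expected} VESD to $F$), and it is essentially equivalent to the lemma you are trying to prove. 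As written, your plan is therefore either circular (indeed \eqref{EsnH_bdd} is itself deduced in the paper from Lemma \ref{mem}) or forces you back on the deterministic bound $\Im s_n^H\le v^{-1}$, which is far too lossy for the intended applications (with $v=n^{-1/6}$ and $l\ge 8$ in Theorem \ref{thm3} it destroys the rate).

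The missing ingredient is the self-bounding absorption and induction that occupies the second half of the paper's proof. One writes $\E\(\Im s_n^H(z)\)^{2l}\le C\,\E|s_n^H(z)-\E s_n^H(z)|^{2l}+C(1+\De^H/v)^{2l}$, where only the integration-by-parts bound $|\E s_n^H(z)-s(z)|\le\pi\De^H/v$ enters; substituting this into the Burkholder estimate, the term $\frac{C}{n^lv^{2l}}\E|s_n^H-\E s_n^H|^{2l}$ is absorbed into the left-hand side because on $\cD$ one can choose $C_0$ so large that $\frac{C}{n^lv^{2l}}\le 2/3$. What remains is the lower-order factor $\E\(\Im s_n^H(z)\)^{l}$ in \eqref{eses}, and this is handled by induction on $l$: exponents below $1$ are treated directly with part (a) of Lemma \ref{Burkholder}, which yields $\E\(\Im s_n^H(z)\)^{p}\le C\(1+(1+\De^H/v)^p\)$ for $1<p<2$, and each induction step feeds the previously obtained moment bound back into \eqref{eses}. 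With this bootstrap added your argument becomes the paper's proof; without it, the factor $(1+\De^H/v)^{2l}$ in the claimed estimate is unjustified.
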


\begin{proof}
	Write $\fx_n^*\fA^{-1}(z)\fx_n-\E\fx_n^*\fA^{-1}(z)\fx_n$ as the sum of a martingale difference sequence.
	Denote $\E_k(\cdot)$ as the conditional expectation given $\{X_{ij},k<i,j\le n\}$.
	\begin{eqnarray*}
		\mathbf{x}_n^*\mathbf{A}^{-1}(z)\mathbf{x}_n-\E\mathbf{x}_n^*\mathbf{A}^{-1}(z)\mathbf{x}_n
		&=&\sum_{k=1}^n(\E_{k-1}-\E_k)\fx_n^*\fA^{-1}(z)\fx_n\\
		&=&\sum_{k=1}^n(\textrm{E}_{k-1}-\textrm{E}_k)\mathbf{x}_n^*(\mathbf{A}^{-1}(z)-\mathbf{A}_k^{-1}(z))\mathbf{x}_n\\
		&=&\sum_{k=1}^n(\textrm{E}_{k-1}-\textrm{E}_k)\mathbf{x}_n^*\mathbf{A}^{-1}(z)(\mathbf{A}_k(z)-\mathbf{A}(z))\mathbf{A}_k^{-1}(z)\mathbf{x}_n\\
		&=&-\sum_{k=1}^n(\textrm{E}_{k-1}-\textrm{E}_k)\mathbf{x}_n^*\mathbf{A}^{-1}(z) (\mathbf{w}_k\mathbf{e}_k^*+\mathbf{e}_k\mathbf{w}_k^*)\mathbf{A}_k^{-1}(z)\mathbf{x}_n\\
		&:=&\phi_{n1}+\phi_{n2}.
	\end{eqnarray*}
	By \eqref{Awk} and \eqref{Ak1}, we further decompose $\phi_{n1}$ as
	\begin{eqnarray*}
		\phi_{n1}&=&-\sum_{k=1}^n(\textrm{E}_{k-1}-\textrm{E}_k)\mathbf{x}_n^*\mathbf{A}^{-1}(z)\mathbf{w}_k\mathbf{e}_k^*\mathbf{A}_k^{-1}(z)\mathbf{x}_n\\
		&=&z^{-1}\sum_{k=1}^n(\textrm{E}_{k-1}-\textrm{E}_k)x_k\mathbf{x}_n^*\mathbf{A}_{k1}^{-1}(z)\mathbf{w}_k\alpha_k\\
		&=&z^{-1}\sum_{k=1}^n(\textrm{E}_{k-1}-\textrm{E}_k)x_k \left(\mathbf{x}_n^*\mathbf{A}_k^{-1}(z)\mathbf{w}_k\alpha_k
		-\mathbf{x}_n^*\mathbf{A}_k^{-1}(z)\mathbf{e}_k\mathbf{w}_k^*\mathbf{A}_k^{-1}(z)\mathbf{w}_k\alpha_k\right)\\
		&=&z^{-1}\sum_{k=1}^n(\textrm{E}_{k-1}-\textrm{E}_k)x_k\mathbf{x}_n^*\mathbf{A}_k^{-1}(z)\mathbf{w}_k\alpha_k
		+z^{-2}\sum_{k=1}^n(\textrm{E}_{k-1}-\textrm{E}_k)|x_k|^2\mathbf{w}_k^*\mathbf{A}_k^{-1}(z)\mathbf{w}_k\alpha_k\\
		&=&z^{-1}\sum_{k=1}^n(\textrm{E}_{k-1}-\textrm{E}_k)x_k\mathbf{x}_n^*\mathbf{A}_k^{-1}(z)\mathbf{w}_k\alpha_k
		-z^{-1}\sum_{k=1}^n(\textrm{E}_{k-1}-\textrm{E}_k)|x_k|^2\alpha_k\\
		&:=&\phi_{n11}+\phi_{n12}.
	\end{eqnarray*}
	Similar to $\phi_{n1}$, we obtain
	\begin{eqnarray*}
		\phi_{n2}&=&-\sum_{k=1}^n(\textrm{E}_{k-1}-\textrm{E}_k)\mathbf{x}_n^*\mathbf{A}^{-1}(z)\mathbf{e}_k\mathbf{w}_k^*\mathbf{A}_k^{-1}(z)\mathbf{x}_n\\
		&=&-\sum_{k=1}^n(\textrm{E}_{k-1}-\textrm{E}_k)\mathbf{x}_n^*\mathbf{A}_{k2}^{-1}(z)\mathbf{e}_k\mathbf{w}_k^*\mathbf{A}_k^{-1}(z)\mathbf{x}_n\alpha_k\\
		&=&-\sum_{k=1}^n(\textrm{E}_{k-1}-\textrm{E}_k)\mathbf{x}_n^* (\mathbf{A}_k^{-1}(z)-\mathbf{A}_k^{-1}(z)\mathbf{w}_k\mathbf{e}_k^*\mathbf{A}_k^{-1}(z)) \mathbf{e}_k\mathbf{w}_k^*\mathbf{A}_k^{-1}(z)\mathbf{x}_n\alpha_k\\
		&=&-\sum_{k=1}^n(\textrm{E}_{k-1}-\textrm{E}_k)\mathbf{x}_n^*\mathbf{A}_k^{-1}(z)\mathbf{e}_k\mathbf{w}_k^*\mathbf{A}_k^{-1}(z)\mathbf{x}_n\alpha_k\\
		&&+\sum_{k=1}^n(\textrm{E}_{k-1}-\textrm{E}_k)\mathbf{x}_n^*\mathbf{A}_k^{-1}(z)\mathbf{w}_k\mathbf{e}_k^*\mathbf{A}_k^{-1}(z)\mathbf{e}_k\mathbf{w}_k^*\mathbf{A}_k^{-1}(z)\mathbf{x}_n\alpha_k\\
		&=&\sum_{k=1}^n(\textrm{E}_{k-1}-\textrm{E}_k)z^{-1}\bar{x}_k\mathbf{w}_k^*\mathbf{A}_k^{-1}(z)\mathbf{x}_n\alpha_k\\
		&&-\sum_{k=1}^n(\textrm{E}_{k-1}-\textrm{E}_k)z^{-1}\mathbf{x}_n^*\mathbf{A}_k^{-1}(z)\mathbf{w}_k\mathbf{w}_k^*\mathbf{A}_k^{-1}(z)\mathbf{x}_n\alpha_k\\
		&:=&\phi_{n21}+\phi_{n22}.
	\end{eqnarray*}
	Notice that $\phi_{nij}$, $i,j=1,2$ are martingale difference sequence,
	for any $l\geq 1$, we conclude from Lemma \ref{Burkholder}(b) and \ref{xtrx} that
	\begin{eqnarray*}
		\E|\phi_{n11}|^{2l}&\le& C\E
		\(\sum_{k=1}^n\E_k\lj(\E_{k-1}-\E_k)x_k \fx_n^*\fA_k^{-1}(z)\fw_kz^{-1}\al_k\rj^2\)^l\\
		&&+C\sum_{k=1}^n\E\lj(\E_{k-1}-\E_k)x_k \fx_n^*\fA_k^{-1}(z)\fw_kz^{-1}\al_k\rj^{2l}\\
		&\le& C\E\(\sum_{k=1}^n|x_k|^2\E_k|\fx_n^*\fA_k^{-1}(z)\fw_k|^2\)^l
		+C\sum_{k=1}^n|x_k|^{2l}\E|\fx_n^*\fA_k^{-1}(z)\fw_k|^{2l} \\
		&\le&\dfrac{C}{n^lv^l}\E\(\Im(s_n^H(z))\)^l+\dfrac{C}{n^{l/2+2}v^l}\E\(\Im(s_n^H(z))\)^l.
	\end{eqnarray*}
	As in dealing with $\E|\phi_{n11}|^{2l}$, similarly we obtain
	\[
	\E|\phi_{n21}|^{2l}\le\dfrac{C}{n^lv^l}\E\(\Im(s_n^H(z))\)^l+\dfrac{C}{n^{l/2+2}v^l}\E\(\Im(s_n^H(z))\)^l.
	\]
	
	Since $\al_k=\ga_k-z^{-1}\al_k\ga_k\xi_k$ and $(\E_{k-1}-\E_k)\ga_k=0$, we have
	\[
	\phi_{n12}=z^{-2}\sum_{k=1}^n(\E_{k-1}-\E_k)|x_k|^2\al_k\ga_k\xi_k.
	\]
	Again, by Lemma \ref{Burkholder}(b) and \ref{agb}, it follows that
	\begin{eqnarray*}
		\E|\phi_{n12}|^{2l}&\le& C\E
		\(\sum_{k=1}^n|x_k|^4\E_k|\xi_k|^2\)^l+C\sum_{k=1}^n|x_k|^{4l}\E|\xi_k|^{2l}\\
		&\le&\dfrac{C}{n^lv^l}(1+\De/v)^l+\dfrac{C}{n^{l+1}v^{2l-1}}(1+\De/v).
	\end{eqnarray*}
	
	Now, we turn to term $\phi_{n22}$.
	Substitute $\al_k=\ga_k-z^{-1}\al_k\ga_k\xi_k$ into $\phi_{n22}$, and note that
	\begin{eqnarray*}
		&&(\textrm{E}_{k-1}-\textrm{E}_k)\mathbf{x}_n^*\mathbf{A}_k^{-1}(z)\mathbf{w}_k\mathbf{w}_k^*\mathbf{A}_k^{-1}(z)\mathbf{x}_n\gamma_k\\
		&=&\textrm{E}_{k-1}\gamma_k\left(
		\mathbf{w}_k^*\mathbf{A}_k^{-1}(z)\mathbf{x}_n\mathbf{x}_n^*\mathbf{A}_k^{-1}(z)\mathbf{w}_k -n^{-1}\mathbf{x}_n^*\mathbf{A}_k^{-1}(z)
		(\mathbf{I}-\mathbf{e}_k\mathbf{e}_k^*)\mathbf{A}_k^{-1}(z)\mathbf{x}_n
		\right)\\
		&=&\textrm{E}_{k-1}\gamma_k\left(
		\mathbf{w}_k^*\mathbf{A}_k^{-1}(z)\mathbf{x}_n\mathbf{x}_n^*\mathbf{A}_k^{-1}(z)\mathbf{w}_k
		-n^{-1}(\mathbf{x}_n^*\mathbf{A}_k^{-2}(z)\mathbf{x}_n -\mathbf{x}_n^*\mathbf{A}_k^{-1}(z)\mathbf{e}_k\mathbf{e}_k^*\mathbf{A}_k^{-1}(z)\mathbf{x}_n)
		\right)\\
		&=&\textrm{E}_{k-1}\gamma_k\left(\mathbf{w}_k^*\mathbf{A}_k^{-1}(z)\mathbf{x}_n\mathbf{x}_n^*\mathbf{A}_k^{-1}(z)\mathbf{w}_k
		-n^{-1}(\mathbf{x}_n^*\mathbf{A}_k^{-2}(z)\mathbf{x}_n-z^{-2}|x_k|^2)
		\right)\\
		&=&\textrm{E}_{k-1}\gamma_k\eta_k
	\end{eqnarray*}
	We further decompose $\phi_{n22}$ as
	\begin{eqnarray*}
		\phi_{n22}&=&-\sum_{k=1}^nz^{-1}\textrm{E}_{k-1}\gamma_k\eta_k\\
		&&+\sum_{k=1}^n(\textrm{E}_{k-1}-\textrm{E}_k)z^{-2} \left(\mathbf{w}_k^*\mathbf{A}_k^{-1}(z)\mathbf{x}_n\mathbf{x}_n^*\mathbf{A}_k^{-1}(z)\mathbf{w}_k\right)
		\alpha_k\gamma_k\xi_k\\
		&&-\bigg(\sum_{k=1}^n(\textrm{E}_{k-1}-\textrm{E}_k)z^{-2}\dfrac{1}{n} \left(\mathbf{x}_n^*\mathbf{A}_k^{-2}(z)\mathbf{x}_n-z^{-2}|x_k|^2\right)
		\alpha_k\gamma_k\xi_k\\
		&&-\sum_{k=1}^n(\textrm{E}_{k-1}-\textrm{E}_k)z^{-2}\dfrac{1}{n} \left(\mathbf{x}_n^*\mathbf{A}_k^{-2}(z)\mathbf{x}_n-z^{-2}|x_k|^2\right)
		\alpha_k\gamma_k\xi_k\bigg)\\
		&=&-\sum_{k=1}^nz^{-1}\textrm{E}_{k-1}\gamma_k\eta_k\\
		&&+\sum_{k=1}^n(\textrm{E}_{k-1}-\textrm{E}_k)z^{-2}\eta_k\alpha_k\gamma_k\xi_k\\
		&&+\sum_{k=1}^n(\textrm{E}_{k-1}-\textrm{E}_k)z^{-2}\dfrac{1}{n} \left(\mathbf{x}_n^*\mathbf{A}_k^{-2}(z)\mathbf{x}_n-z^{-2}|x_k|^2\right)
		\alpha_k\gamma_k\xi_k\\
		&:=&\phi_{n221}+\phi_{n222}+\phi_{n223}.
	\end{eqnarray*}
	By Lemma \ref{xtrx}, it is easy to verify that
	\[
	\E|\eta_k|^{2l}\le\dfrac{C}{n^{2l}v^{2l}}\E\(\Im(s_n^H(z))\)^{2l}
	+\dfrac{C}{n^{l+2}v^{2l}}\E\(\Im(s_n^H(z))\)^{2l}.
	\]
	Therefore, by Lemma \ref{Burkholder}(b), it leads to
	\[
	\E|\phi_{n22j}|^{2l}\le\dfrac{C}{n^lv^{2l}}\E\(\Im(s_n^H(z))\)^{2l}, ~~~{\rm for}~j=1,2,3.
	\]
	
	Combined the above results together, we obtain
	\[
	\E|s_n^H(z)-\E s_n^H(z)|^{2l}\le\dfrac{C}{n^{l/2+2}v^l}\E\(\Im(s_n^H(z))\)^l
	+\dfrac{C}{n^lv^{2l}}\E\(\Im(s_n^H(z))\)^{2l}.
	\]
	Note that
	\[
	\E\(\Im(s_n^H(z))\)^{2l}
	\le C\E|s_n^H(z)-\E s_n^H(z)|^{2l}+C(1+\De^H/v)^{2l}.
	\]
	For $z\in\cD$, that is $v\geq C_0n^{-1/2}$, we can choose $C_0$ large enough, such that
	$\dfrac{C}{n^lv^{2l}}\le 2/3$. Therefore,
	\begin{eqnarray}\label{eses}
	\E|s_n^H(z)-\E s_n^H(z)|^{2l}
	\le \dfrac{C}{n^{l/2+2}v^l}\E\(\Im(s_n^H(z))\)^l+\dfrac{C}{n^lv^{2l}}(1+\De/v)^{2l}.
	\end{eqnarray}
	Next, we use induction to treat the term $\E\(\Im(s_n^H(z))\)^l$ for $l>1$.
	
	When $1/2<l<1$, applying Lemma \ref{Burkholder}(a) to $\E|\phi_{nij}|^{2l}$, we have
	\begin{eqnarray*}
		\E|s_n^H(z)-\E s_n^H(z)|^{2l} &\le& C\sum_{i,j=1}^2\E|\phi_{nij}|^{2l}\\
		&\le&\dfrac{C}{n^lv^{2l}}(1+\De^H/v)^l ~<~C.
	\end{eqnarray*}
	This shows that for $1<p<2$,
	\begin{eqnarray*}
		\E\(\Im(s_n^H(z))\)^p &\le& 2\E |s_n^H(z)-\E s_n^H(z)|^p+2|\E s_n^H(z)|^p\\
		&\le& C\(1+(1+\De^H/v)^p\)
	\end{eqnarray*}
	Thus the lemma follows for $1<l<2$ by substituting the result of $\E\(\Im(s_n^H(z))\)^p$
	into \eqref{eses}.
	Therefore, the lemma holds by using induction and \eqref{eses}.
	This completes the proof of lemma \ref{mem}.
\end{proof}

\begin{lem}\label{tail}
	Under the conditions in Theorem \ref{thm1}, for any fixed $t>0$,
	\[
	\int_3^{\infty} \lj\E H^{\fW_n}(x)-F(x)\rj dx=o(n^{-t}).
	\]
\end{lem}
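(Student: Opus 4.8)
The plan is to first peel off the eigenvector weights and reduce the statement to a large–deviation estimate for the largest eigenvalue of $\fW_n$. Because the semicircle law is supported on $[-2,2]$ we have $F(x)=1$ for $x\ge 3$, and because $\sum_{i=1}^n|y_i|^2=\|\fU_n^*\fx_n\|^2=1$ we have $0\le H^{\fW_n}(x)\le 1$; hence, writing $\lambda_1\ge\cdots\ge\lambda_n$ for the eigenvalues of $\fW_n$,
\[
\lj\E H^{\fW_n}(x)-F(x)\rj=1-\E H^{\fW_n}(x)=\E\sum_{i\,:\,\lambda_i>x}|y_i|^2,\qquad x\ge 3 .
\]
By Tonelli's theorem and the identity $\int_3^\infty\indic(\lambda_i>x)\,dx=(\lambda_i-3)\indic(\lambda_i>3)$,
\[
\int_3^\infty\lj\E H^{\fW_n}(x)-F(x)\rj dx=\E\sum_{i=1}^n|y_i|^2(\lambda_i-3)\indic(\lambda_i>3)\le\E\big[\|\fW_n\|\,\indic(\|\fW_n\|>3)\big],
\]
where the last inequality uses $(\lambda_i-3)\indic(\lambda_i>3)\le(\lambda_1-3)\indic(\lambda_1>3)\le\|\fW_n\|\,\indic(\|\fW_n\|>3)$ together with $\sum_i|y_i|^2=1$. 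In particular this bound no longer involves $\fx_n$, so the problem is now purely about the spectral norm of $\fW_n$.

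Second, I would bound $\E\big[\|\fW_n\|\,\indic(\|\fW_n\|>3)\big]$ by a high even moment of $\fW_n$: for any positive integer $k$,
\[
\|\fW_n\|\,\indic(\|\fW_n\|>3)\le 3^{-(2k-1)}\|\fW_n\|^{2k}\le 3^{-(2k-1)}\tr\fW_n^{2k},
\]
where I used that after the truncation of Section \ref{truncation} the largest eigenvalue is nonnegative (since $X_{ii}=0$). Under that truncation, centralization and rescaling (so that $|X_{ij}|\le\ep_n n^{1/4}$, $\E X_{ij}=0$, $\var(X_{ij})=1$, $\E|X_{ij}|^8\le C$) the classical graph-counting estimate for Wigner matrices yields constants $C_1>0$ and $\rho\in(2,3)$ with $\E\tr\fW_n^{2k}\le C_1 n\rho^{2k}$ for all $k$ below a threshold that grows like a positive power of $n$; this is precisely the estimate behind $\lambda_{\max}(\fW_n)\to 2$ almost surely, and is available in \cite{Bai2010b}. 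Taking $k=k_n=\lceil (t+1)(\log n)/(2\log(3/\rho))\rceil$ (which lies below the admissible threshold once $n$ is large) then gives
\[
\E\big[\|\fW_n\|\,\indic(\|\fW_n\|>3)\big]\le 3C_1\,n\,(\rho/3)^{2k_n}=3C_1\,n\,e^{-2k_n\log(3/\rho)}=o(n^{-t}),
\]
which is the assertion of the lemma. Equivalently, one may first extract $P(\|\fW_n\|>3)=o(n^{-s})$ for every $s>0$ and then use Cauchy--Schwarz, $\E[\|\fW_n\|\indic(\|\fW_n\|>3)]\le(\E\|\fW_n\|^2)^{1/2}(P(\|\fW_n\|>3))^{1/2}\le\sqrt n\,(P(\|\fW_n\|>3))^{1/2}$, since $\E\|\fW_n\|^2\le\E\tr\fW_n^2=n^{-1}\sum_{i\ne j}\E|X_{ij}|^2\le n$.

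The main obstacle is the moment estimate $\E\tr\fW_n^{2k}\le C_1 n\rho^{2k}$ with $\rho<3$ for $k$ allowed to grow at least like $\log n$: one expands $\E\tr\fW_n^{2k}=n^{-k}\sum\E\big(X_{i_1i_2}X_{i_2i_3}\cdots X_{i_{2k}i_1}\big)$, organises the closed walks by their underlying multigraph, and must verify that, thanks to the truncation at $\ep_n n^{1/4}$ together with the finite $8$th moment, the walks that traverse an edge more than twice or create a cycle contribute only lower-order terms uniformly over the relevant range of $k$, so that the tree-like walks produce the factor $\rho^{2k}$ with $\rho$ arbitrarily close to $2$. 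Everything else — Tonelli, the eigenvalue comparison, the trivial bound $\E\tr\fW_n^2\le n$ — is routine, so once a ready-made norm (or moment) bound for truncated Wigner matrices is cited, the proof of the lemma is only a couple of lines.
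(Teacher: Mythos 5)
Your argument is correct and its first half is the same reduction the paper makes (on $[3,\infty)$ the integrand is $1-\E H^{\fW_n}(x)$, which only sees eigenvalues above $x$), but you then diverge: the paper simply bounds $1-H^{\fW_n}(x)\le \indic(\la_{\max}(\fW_n)\ge x)$ pointwise and integrates the tail probability from Lemma \ref{outside}, whose built-in decay $C n^{-(t+1)}(3+x)^{-2}$ makes $\int_3^\infty P(\la_{\max}(\fW_n)\ge x)\,dx=o(n^{-t})$ in one line; you instead apply Tonelli to reduce to $\E\big[\|\fW_n\|\,\indic(\|\fW_n\|>3)\big]$ and control it either by high trace moments $\E\tr\fW_n^{2k}\le C_1 n\rho^{2k}$ with $k\asymp\log n$, or by Cauchy--Schwarz together with $P(\|\fW_n\|>3)=o(n^{-s})$. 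Your Cauchy--Schwarz variant uses exactly the same external input as the paper (Lemma \ref{outside}) and is fully rigorous; your main route instead re-imports the graph-counting moment bound that is the engine behind that lemma, which you cite but do not verify uniformly in $k$, so it does more work for the same payoff. Two small points: with your choice $k_n=\lceil (t+1)\log n/(2\log(3/\rho))\rceil$ the bound is $3C_1 n^{-t}=O(n^{-t})$ rather than $o(n^{-t})$; replace $t+1$ by $t+2$ (or invoke arbitrariness of $t$) to finish. Also, the nonnegativity of $\la_{\max}$ is not needed for $\|\fW_n\|^{2k}\le\tr\fW_n^{2k}$, since the exponent is even.
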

\begin{proof}
	For any fixed $t>0$ and $x>0$, by Lemma \ref{outside}, it follows that
	\[
	P\(\la_{\max}(\fW_n)\geq 3+x\)\le Cn^{-(t+1)}(3+x)^{-2}.
	\]
	Note that
	\[
	1-H^{\fW_n}(x)\le \indic(\la_{\max}(\fW_n)\geq x), ~~~~ {\rm for}~ x\geq 0.
	\]
	Therefore, we have
	\begin{eqnarray*}
		\int_3^{\infty}\lj\E H^{\fW_n}(x)-F(x)\rj dx
		&\le& \int_3^{\infty}P\(\la_{\max}(\fW_n)\geq x\)dx \\
		&\le& \int_0^{\infty}Cn^{-(t+1)}(3+x)^{-2}dx \\
		&=&o(n^{-t}).
	\end{eqnarray*}
\end{proof}

\section{Appendix II}\label{Appendix2}
\begin{lem}(\textbf{Burkholder Inequalities}).
	Let $\{X_k\}$ be a complex martingale difference sequence with respect to the
	increasing $\sigma$-field $\{\mathcal{F}_k\}$, and let $\mbox{\em E}_k$ denote the conditional expectation with respect to $\mathcal{F}_k$. Then we have\\
	\noindent (a) for $p>1$,
	\begin{eqnarray*}
		\mbox{\em E} \bigg|\sum_{k=1}^n X_k \bigg|^p \leq K_p \mbox{\em E} \bigg(\sum_{k=1}^n
		|X_k|^2 \bigg)^{p/2};
	\end{eqnarray*}
	\noindent (b) for $p \geq 2$,
	\begin{eqnarray*}
		\mbox{\em E}\bigg|\sum_{k=1}^n X_k\bigg|^p \leq K_p \bigg(\mbox{\em E}\big(\sum_{k=1}^n
		\mbox{\em E}_{k-1}|X_k|^2 \big)^{p/2}+\mbox{\em E}\sum_{k=1}^n |X_k|^p\bigg).
	\end{eqnarray*}
	\label{Burkholder}
	where $K_p$ is a constant which depends upon $p$ only.
\end{lem}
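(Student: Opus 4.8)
The plan is to regard part~(a) as the classical Burkholder martingale inequality, which requires no new argument: it is a standard result (see, e.g., \cite{Bai2010b}), so I would simply cite it. Hence the only thing to organize is how part~(b) follows from part~(a); the borderline case $p=2$ is the identity $\E|\sum_{k=1}^nX_k|^2=\E\sum_{k=1}^n|X_k|^2=\E\sum_{k=1}^n\E_{k-1}|X_k|^2$, so I may assume $p>2$ throughout.

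The first step is to split the square function into its predictable compensator and a remainder. Put $A_n:=\sum_{k=1}^n\E_{k-1}|X_k|^2$ and $Y_k:=|X_k|^2-\E_{k-1}|X_k|^2$; since $X_k$ is $\mathcal{F}_k$-measurable while $\E_{k-1}|X_k|^2$ is $\mathcal{F}_{k-1}$-measurable and $\E_{k-1}Y_k=0$, the sequence $\{Y_k\}$ is again a complex martingale difference sequence with respect to $\{\mathcal{F}_k\}$. Substituting $\sum_{k=1}^n|X_k|^2=A_n+\sum_{k=1}^nY_k$ into the bound of part~(a) and using $(a+b)^{p/2}\le 2^{p/2-1}(a^{p/2}+b^{p/2})$ for $a,b\ge0$ gives
\[
\E\Big|\sum_{k=1}^nX_k\Big|^p\ \le\ K_p\,\E\Big(\sum_{k=1}^n|X_k|^2\Big)^{p/2}\ \le\ 2^{p/2-1}K_p\Big(\E A_n^{p/2}+\E\Big|\sum_{k=1}^nY_k\Big|^{p/2}\Big).
\]
The term $\E A_n^{p/2}$ is already in the desired form, so it remains to dominate $\E|\sum_{k=1}^nY_k|^{p/2}$ by a constant multiple of $\E\sum_{k=1}^n|X_k|^p+\E A_n^{p/2}$.

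For that I would apply part~(a) once more, now with the admissible exponent $p/2>1$, to get $\E|\sum_kY_k|^{p/2}\le K_{p/2}\,\E(\sum_k|Y_k|^2)^{p/4}$, then use $|Y_k|\le|X_k|^2+\E_{k-1}|X_k|^2$, the elementary maximal bound $\sum_k|Y_k|^2\le(\max_k|Y_k|)\sum_k|Y_k|$ together with $\max_k|X_k|^{p-2}\le\sum_k|X_k|^{p-2}$, a splitting of the resulting products by H\"older's inequality, and finally conditional Jensen ($(\E_{k-1}|X_k|^2)^{p/2}\le\E_{k-1}|X_k|^p$) plus one more application of part~(a) to $A_n$ itself to absorb the compensator contributions. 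Collecting terms and renaming the constant $K_p$ then yields~(b). The step I expect to be the main obstacle is exactly this last reduction: the bound for $\E|\sum_kY_k|^{p/2}$ is mildly self-referential, since it reproduces quantities of the type $\sum_k|X_k|^2$ and $A_n$ with a smaller exponent, so one has to keep track of the constants to make sure the leftover pieces are genuinely dominated by $\E\sum_k|X_k|^p+\E A_n^{p/2}$ and do not re-create $\E|\sum_kX_k|^p$ with coefficient $\ge1$. Since this bookkeeping is precisely what is carried out in the classical references, I would either reproduce that argument or, more economically, cite it.
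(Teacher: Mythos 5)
Your treatment is consistent with what the paper itself does: Lemma \ref{Burkholder} sits in the appendix of quoted auxiliary results and is stated there without proof, as the classical Burkholder (and Burkholder--Rosenthal) inequalities, so there is no internal argument for you to match and citing the standard literature is exactly the paper's stance. The extra content you supply --- deducing (b) from (a) by writing $\sum_k|X_k|^2=A_n+\sum_k Y_k$ with $A_n=\sum_k\E_{k-1}|X_k|^2$ and $Y_k=|X_k|^2-\E_{k-1}|X_k|^2$ a new martingale difference sequence, then reapplying (a) with exponent $p/2>1$ and splitting via maximal bounds, H\"older and conditional Jensen --- is the standard textbook derivation of the Rosenthal-type bound, and you correctly identify the only delicate point: the iteration reproduces $\E\big(\sum_k|X_k|^2\big)^{p/2}$ (not $\E|\sum_kX_k|^p$) with a small coefficient, and absorbing it requires that this quantity be finite, which in the classical proofs is ensured by a truncation or stopping-time reduction (or by an induction over dyadic ranges of $p$); since you defer that bookkeeping to the references, the argument is sound as a citation-plus-sketch. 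One minor remark: the chain of elementary bounds you list (e.g.\ $\sum_k|Y_k|^2\le(\max_k|Y_k|)\sum_k|Y_k|$ combined with $\max_k|X_k|^{p-2}\le\sum_k|X_k|^{p-2}$) is workable but not the cleanest bookkeeping; the usual route bounds $\E\big[(\max_k|Y_k|)^{p/4}\big(\sum_k|Y_k|\big)^{p/4}\big]$ by Cauchy--Schwarz plus Young, using $\max_k(\E_{k-1}|X_k|^2)^{p/2}\le A_n^{p/2}$ and $\E\max_k|X_k|^p\le\E\sum_k|X_k|^p$, which makes the absorption transparent. In short: correct, and in effect the same approach as the paper, with a harmless additional sketch.
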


\begin{lem}(Lemma 2.6 of \cite{MR1345534})
	Let $z\in \mathbb{C}^+$ with $v=\Im z$, $\mathbf{A}$ and $\mathbf{B}$ Hermitian and $\mathbf{r}\in \mathbb{C}^n$. Then
	$$\left|\textrm{tr}\left((\mathbf{B}-z\mathbf{I})^{-1} -(\mathbf{B}+\mathbf{rr}^*-z\mathbf{I})^{-1}\right)\mathbf{A}\right| =\left|\frac{\mathbf{r}^*(\mathbf{B}-z\mathbf{I})^{-1}\mathbf{A} (\mathbf{B}-z\mathbf{I})^{-1}\mathbf{r}}{1+\mathbf{r}^*(\mathbf{B}-z\mathbf{I})^{-1}\mathbf{r}}\right|\le \frac{\|\mathbf{A}\|}{v}.$$
	\label{rr}
\end{lem}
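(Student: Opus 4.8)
The plan is to deduce the lemma from the Sherman--Morrison identity \eqref{b1} together with one elementary positivity property of the resolvent; since the statement is exactly Lemma~2.6 of \cite{MR1345534} one could also simply invoke it, but the argument is short enough to record. Throughout, set $\mathbf{R}:=(\mathbf{B}-z\mathbf{I})^{-1}$, which is well defined because $\mathbf{B}$ is Hermitian and $v=\Im z>0$, and which is normal for the same reason. Applying \eqref{b1} with $q=1$, $\mathbf{a}=\mathbf{b}=\mathbf{r}$ and base matrix $\mathbf{B}-z\mathbf{I}$ gives
\[
(\mathbf{B}+\mathbf{r}\mathbf{r}^*-z\mathbf{I})^{-1}=\mathbf{R}-\frac{\mathbf{R}\mathbf{r}\mathbf{r}^*\mathbf{R}}{1+\mathbf{r}^*\mathbf{R}\mathbf{r}}.
\]
Subtracting, right-multiplying by $\mathbf{A}$, taking the trace and using its cyclic invariance (so that $\textrm{tr}(\mathbf{R}\mathbf{r}\mathbf{r}^*\mathbf{R}\mathbf{A})=\mathbf{r}^*\mathbf{R}\mathbf{A}\mathbf{R}\mathbf{r}$) yields the claimed identity.

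For the inequality I would estimate the numerator and denominator of $\mathbf{r}^*\mathbf{R}\mathbf{A}\mathbf{R}\mathbf{r}/(1+\mathbf{r}^*\mathbf{R}\mathbf{r})$ separately. Writing the numerator as the inner product $\langle\mathbf{R}^*\mathbf{r},\,\mathbf{A}\mathbf{R}\mathbf{r}\rangle$ and applying Cauchy--Schwarz, together with the fact that $\|\mathbf{R}^*\mathbf{r}\|=\|\mathbf{R}\mathbf{r}\|$ by normality of $\mathbf{R}$, gives $|\mathbf{r}^*\mathbf{R}\mathbf{A}\mathbf{R}\mathbf{r}|\le\|\mathbf{A}\|\,\|\mathbf{R}\mathbf{r}\|^2$. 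For the denominator, the identity $\mathbf{R}-\mathbf{R}^*=(z-\bar z)\mathbf{R}\mathbf{R}^*$ shows that $\Im\mathbf{R}=v\,\mathbf{R}\mathbf{R}^*$ is positive semidefinite, hence $\Im(\mathbf{r}^*\mathbf{R}\mathbf{r})=v\|\mathbf{R}^*\mathbf{r}\|^2=v\|\mathbf{R}\mathbf{r}\|^2$, and so $|1+\mathbf{r}^*\mathbf{R}\mathbf{r}|\ge\Im(\mathbf{r}^*\mathbf{R}\mathbf{r})=v\|\mathbf{R}\mathbf{r}\|^2$. Dividing the two bounds produces $\|\mathbf{A}\|/v$.

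I do not expect any genuine obstacle here; the only points needing care are the sign convention ($v>0$, so that $\Im\mathbf{R}$ is positive semidefinite, which is what keeps $|1+\mathbf{r}^*\mathbf{R}\mathbf{r}|$ bounded away from $0$ relative to $\|\mathbf{R}\mathbf{r}\|^2$) and the normality of $\mathbf{R}$, used so that the Cauchy--Schwarz step and the computation of $\Im(\mathbf{r}^*\mathbf{R}\mathbf{r})$ both reduce to the same quantity $\|\mathbf{R}\mathbf{r}\|^2$ and cancel exactly. Should one wish to avoid even this, the estimate may be quoted directly from Lemma~2.6 of \cite{MR1345534}.
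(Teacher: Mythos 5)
Your argument is correct and complete. Note that the paper itself supplies no proof of this statement: Lemma \ref{rr} is imported verbatim as Lemma 2.6 of \cite{MR1345534}, so there is nothing internal to compare against. Your derivation is the standard one behind that cited result: the identity follows from the Sherman--Morrison formula \eqref{b1} with $\mathbf{a}=\mathbf{b}=\mathbf{r}$ plus cyclicity of the trace, and the bound follows from Cauchy--Schwarz on the numerator together with $\Im\big(\mathbf{r}^*(\mathbf{B}-z\mathbf{I})^{-1}\mathbf{r}\big)=v\,\|(\mathbf{B}-z\mathbf{I})^{-1}\mathbf{r}\|^2$ (your positivity identity $\mathbf{R}-\mathbf{R}^*=(z-\bar z)\mathbf{R}\mathbf{R}^*$), with normality of the resolvent making the two quantities $\|\mathbf{R}\mathbf{r}\|^2$ and $\|\mathbf{R}^*\mathbf{r}\|^2$ coincide and cancel; the degenerate case $\mathbf{r}=\mathbf{0}$ is trivial, and the same imaginary-part estimate shows $1+\mathbf{r}^*\mathbf{R}\mathbf{r}\neq 0$, justifying the use of \eqref{b1}. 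No gaps.
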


\begin{lem} ((1.15) in \cite{MR2040792})
	Let $\mathbf{Y}=(Y_1,\cdots,Y_n)$, where $Y_i$'s are i.i.d. complex random variables with mean zero and variance 1. Let $\mathbf{A}=(a_{ij})_{n\times n}$ and $\mathbf{B}=(b_{ij})_{n\times n}$ be complex matrices. Then the following identity holds:
	\begin{eqnarray*}
		&&\textrm{E}(\mathbf{Y}^*\mathbf{AY}-\textrm{tr}\mathbf{A}) (\mathbf{Y}^*\mathbf{BY}-\textrm{tr}\mathbf{B})\\
		&& \qquad\qquad=(\textrm{E}|Y_1|^4-|\textrm{E}Y_1^2|^2-2)\sum_{i=1}^na_{ii}b_{ii}+ |\textrm{E}Y_1^2|^2\textrm{tr}\mathbf{AB}^T+\textrm{tr}\mathbf{AB}.
	\end{eqnarray*}
	\label{xabx}
\end{lem}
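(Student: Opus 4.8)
The plan is a direct second-moment computation carried out entrywise. First I would expand each centred quadratic form by separating its diagonal and off-diagonal parts,
\[
\mathbf{Y}^*\mathbf{AY}-\textrm{tr}\mathbf{A}=\sum_{i}a_{ii}(|Y_i|^2-1)+\sum_{i\neq j}a_{ij}\bar Y_iY_j,
\]
and likewise for $\mathbf{B}$. The product of the two expansions then splits into four families of terms---(diagonal)$\times$(diagonal), (diagonal)$\times$(off-diagonal), its mirror image, and (off-diagonal)$\times$(off-diagonal)---and I would compute the expectation of each family using only independence of the $Y_i$ together with $\textrm{E}Y_i=0$, $\textrm{E}|Y_i|^2=1$, $\textrm{E}Y_i^2=\textrm{E}Y_1^2$ and $\textrm{E}|Y_i|^4=\textrm{E}|Y_1|^4$.

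The two easy families go as follows. In (diagonal)$\times$(diagonal) the terms with $i\neq j$ split into a product of two independent mean-zero factors and vanish, while the term $i=j$ contributes $\textrm{E}(|Y_i|^2-1)^2=\textrm{E}|Y_1|^4-1$, so this family gives $(\textrm{E}|Y_1|^4-1)\sum_i a_{ii}b_{ii}$. In (diagonal)$\times$(off-diagonal) every term carries an off-diagonal factor $\bar Y_kY_l$ with $k\neq l$; a brief case analysis of the index coincidences shows that some variable always survives to the first power and is independent of the remaining factors, so every such term has expectation $0$, and the mirror family vanishes for the same reason.

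The real work is the (off-diagonal)$\times$(off-diagonal) family $\sum_{i\neq j}\sum_{k\neq l}a_{ij}b_{kl}\,\textrm{E}[\bar Y_iY_j\bar Y_kY_l]$. Enumerating the index patterns that keep $\textrm{E}[\bar Y_iY_j\bar Y_kY_l]$ nonzero, one finds exactly two, and they are disjoint: the pattern $\{i=l,\ j=k\}$ gives $\textrm{E}(|Y_i|^2|Y_j|^2)=1$, hence $\sum_{i\neq j}a_{ij}b_{ji}=\textrm{tr}(\mathbf{AB})-\sum_i a_{ii}b_{ii}$; the pattern $\{i=k,\ j=l\}$ gives $\textrm{E}\bar Y_i^2\,\textrm{E}Y_j^2=|\textrm{E}Y_1^2|^2$, hence $|\textrm{E}Y_1^2|^2\big(\textrm{tr}(\mathbf{AB}^T)-\sum_i a_{ii}b_{ii}\big)$ after using $\sum_{i,j}a_{ij}b_{ij}=\textrm{tr}(\mathbf{AB}^T)$; any pattern with three or four indices coinciding, or with no admissible pairing, again leaves an isolated mean-zero factor and drops out. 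Collecting the surviving contributions gives
\[
(\textrm{E}|Y_1|^4-1)\sum_i a_{ii}b_{ii}+\textrm{tr}(\mathbf{AB})-\sum_i a_{ii}b_{ii}+|\textrm{E}Y_1^2|^2\Big(\textrm{tr}(\mathbf{AB}^T)-\sum_i a_{ii}b_{ii}\Big),
\]
and regrouping the three $\sum_i a_{ii}b_{ii}$ terms yields the coefficient $\textrm{E}|Y_1|^4-|\textrm{E}Y_1^2|^2-2$ together with $\textrm{tr}(\mathbf{AB})+|\textrm{E}Y_1^2|^2\textrm{tr}(\mathbf{AB}^T)$, which is the asserted identity. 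The one delicate point is the bookkeeping in this last family: checking that the two surviving patterns do not overlap, and matching $\sum_{i\neq j}a_{ij}b_{ji}$ with $\textrm{tr}(\mathbf{AB})$ and $\sum_{i\neq j}a_{ij}b_{ij}$ with $\textrm{tr}(\mathbf{AB}^T)$; everything else is routine algebra.
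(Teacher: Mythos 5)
Your computation is correct: the expansion into diagonal and off-diagonal parts, the vanishing of the mixed families, and the bookkeeping of the two surviving index patterns $\{i=l,\,j=k\}$ and $\{i=k,\,j=l\}$ (which are indeed disjoint since $i\neq j$) reproduce the stated identity exactly, including the regrouping that produces the coefficient $\textrm{E}|Y_1|^4-|\textrm{E}Y_1^2|^2-2$. Note that the paper itself gives no proof of this lemma --- it is quoted verbatim as identity (1.15) from Bai and Silverstein (2004) --- so there is nothing to diverge from; your direct second-moment expansion is the standard derivation of that cited identity and can stand as a self-contained verification.
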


\begin{lem}(Lemma 2.7 of \cite{MR1617051})
	For $\mathbf{X}=(X_1,\cdots,X_n)^{\prime}$ with i.i.d. standardized real or complex entries such that $\textrm{E}X_i=0$ and $\textrm{E}|X_i|^2=1$, and for $\mathbf{C}$ an $n\times n$ complex matrix, we have, for any $p\geq 2$,
	\[
	\textrm{E}|\mathbf{X}^*\mathbf{CX}-\textrm{tr}\mathbf{C}|^p\le K_p\left[\left(\textrm{E}|X_i|^4\textrm{tr}\mathbf{CC}^*\right)^{p/2} +\textrm{E}|X_i|^{2p}\textrm{tr}(\mathbf{CC}^*)^{p/2}\right].
	\]
	\label{xtrx}
	where $K_p$ is a constant which depends upon $p$ only.
\end{lem}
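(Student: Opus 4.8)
The plan is to split $\fX^*\fC\fX-\tr\fC=S_1+S_2$, with $S_1:=\sum_{i}c_{ii}(|X_i|^2-1)$ the diagonal part and $S_2:=\sum_{i\ne j}\bar X_ic_{ij}X_j$ the off-diagonal part, and to prove the bound for each piece by induction on $p$; the base case $p\in[1,2]$ follows from the exact second-moment identity in Lemma~\ref{xabx} (which gives $\E|\fX^*\fM\fX-\tr\fM|^2\le C\,\E|X_1|^4\,\tr(\fM\fM^*)$) together with Jensen's inequality. Since $S_1$ is a sum of independent centred random variables, Lemma~\ref{Burkholder}(b) gives
\[
\E|S_1|^p\le K_p\Big[\big(\textstyle\sum_i|c_{ii}|^2\,\E(|X_i|^2-1)^2\big)^{p/2}+\textstyle\sum_i|c_{ii}|^p\,\E\big||X_i|^2-1\big|^p\Big]\le K_p\Big[\big(\E|X_1|^4\,\tr\fC\fC^*\big)^{p/2}+\E|X_1|^{2p}\,\tr\big((\fC\fC^*)^{p/2}\big)\Big],
\]
using $\sum_i|c_{ii}|^2\le\tr\fC\fC^*$ and the pinching inequality $\sum_i|c_{ii}|^p\le\|\fC\|_{S_p}^p=\tr\big((\fC\fC^*)^{p/2}\big)$.

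For $S_2$ I would use the coordinatewise martingale decomposition: with $\cF_k=\sigma(X_1,\dots,X_k)$, one has $S_2=\sum_{k=1}^nY_k$ where $Y_k:=(\E_k-\E_{k-1})S_2=\bar X_kv_k+w_kX_k$, $v_k:=\sum_{j<k}c_{kj}X_j$, $w_k:=\sum_{i<k}\bar X_ic_{ik}$, is a martingale difference sequence for $\{\cF_k\}$. Lemma~\ref{Burkholder}(b) then gives, for $p\ge2$,
\[
\E|S_2|^p\le K_p\Big(\E\big(\textstyle\sum_k\E_{k-1}|Y_k|^2\big)^{p/2}+\textstyle\sum_k\E|Y_k|^p\Big).
\]
Since $\E X_k=0$, $\E|X_k|^2=1$, $|\E X_k^2|\le1$, a Cauchy--Schwarz bound on the cross term yields $\E_{k-1}|Y_k|^2\le2(|v_k|^2+|w_k|^2)$, so that $\sum_k\E_{k-1}|Y_k|^2=\fX^*\fM_1\fX+\bar\fX^*\fM_2\bar\fX$ for positive semidefinite $\fM_i$ built from the strictly triangular parts of $\fC$, with $\tr\fM_i\le\tr\fC\fC^*$ and $\tr(\fM_i^{p/2})\le c_p\,\tr\big((\fC\fC^*)^{p/2}\big)$ by boundedness of triangular truncation on $S_p$. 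Writing $\fX^*\fM_1\fX=\tr\fM_1+(\fX^*\fM_1\fX-\tr\fM_1)$ and applying the lemma at the smaller exponent $p/2$ (the inductive step) controls $\E(\fX^*\fM_1\fX)^{p/2}$, and likewise for $\fM_2$; here $\tr(\fM_1^2)\le(\tr\fM_1)^2$ handles the ``light'' part of the recursion and $\tr(\fM_1^{p/2})$ the ``heavy'' one. For $\sum_k\E|Y_k|^p$ one uses independence to write $\E|\bar X_kv_k|^p=\E|X_1|^p\,\E|v_k|^p$, bounds $\E|v_k|^p$ again by Lemma~\ref{Burkholder}(b), and sums, invoking $\sum_k\big(\sum_{j<k}|c_{kj}|^2\big)^{p/2}\le\sum_k\big((\fC\fC^*)_{kk}\big)^{p/2}\le\tr\big((\fC\fC^*)^{p/2}\big)$ (pinching) and $\sum_{k,j}|c_{kj}|^p\le\|\fC\|_{S_p}^p=\tr\big((\fC\fC^*)^{p/2}\big)$; the $w_kX_k$ terms are identical. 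This gives $\sum_k\E|Y_k|^p\le K_p\,\E|X_1|^{2p}\,\tr\big((\fC\fC^*)^{p/2}\big)$.

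Collecting the estimates for $S_1$ and $S_2$, absorbing $\tr\fC\fC^*$-powers into $(\E|X_1|^4\,\tr\fC\fC^*)^{p/2}$ (using $\E|X_1|^4\ge1$) and $\E|X_1|^p\le\E|X_1|^{2p}$ into the heavy weight, produces the claimed bound; the induction runs along $p=2,4,8,\dots$, with intermediate real exponents handled by log-convexity of $L^p$ norms. The step I expect to be the main obstacle is precisely the trace bookkeeping in the $S_2$ estimate: one must ensure the \emph{Schatten} quantity $\tr\big((\fC\fC^*)^{p/2}\big)$, rather than the much larger $(\tr\fC\fC^*)^{p/2}$, appears wherever it is paired with the heavy moment weight $\E|X_1|^{2p}$, which is exactly what forces the pinching inequality and the classical boundedness of triangular truncation on $S_p$, and which requires keeping the two moment weights $(\E|X_1|^4)^{p/2}$ and $\E|X_1|^{2p}$ cleanly separated at every level of the recursion. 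The real-versus-complex distinction affects only constants, through the $|\E X_1^2|^2$ terms in Lemma~\ref{xabx} and in $\E_{k-1}|Y_k|^2$.
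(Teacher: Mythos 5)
This lemma is quoted by the paper from Lemma 2.7 of \cite{MR1617051} (see also Lemma B.26 of \cite{Bai2010b}) and is not proved there, so there is no in-paper argument to compare against; judged on its own, your reconstruction follows the standard route for this inequality and is essentially sound. The decomposition into the diagonal part $S_1$ and the off-diagonal martingale part $S_2$ with differences $Y_k=\bar X_k v_k+w_kX_k$, the use of Lemma \ref{Burkholder}(b) (Rosenthal--Burkholder), the reduction of $\sum_k\E_{k-1}|Y_k|^2$ to quadratic forms in psd matrices built from the strictly triangular parts of $\fC$, and the recursion on the exponent are exactly the right mechanism. Your trace bookkeeping is also correct: $\sum_i|c_{ii}|^p\le\tr((\fC\fC^*)^{p/2})$ and $\sum_k\big((\fC\fC^*)_{kk}\big)^{p/2}\le\tr((\fC\fC^*)^{p/2})$ follow from majorization/pinching for $p\ge2$, $\sum_{i,j}|c_{ij}|^p\le\tr((\fC\fC^*)^{p/2})$ holds for $p\ge2$, and $\tr(\fM_i^{p/2})\le c_p\tr((\fC\fC^*)^{p/2})$ via the Schatten-$p$ boundedness of triangular truncation is legitimate, with $c_p$ depending on $p$ only, which is all the lemma requires; this correctly keeps $\E|X_1|^{2p}$ attached to the Schatten quantity and $(\E|X_1|^4)^{p/2}$ to $(\tr\fC\fC^*)^{p/2}$.

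One point should be repaired, though it is organizational rather than substantive: the claim that the induction runs along $p=2,4,8,\dots$ ``with intermediate real exponents handled by log-convexity of $L^p$ norms'' does not work as stated. Interpolating the right-hand sides at $p_0$ and $p_1=2p_0$ produces factors such as $(\E|X_1|^{2p_0})^{\alpha}(\E|X_1|^{2p_1})^{\beta}$ and $T_{p_0}^{\alpha}T_{p_1}^{\beta}$ with $\alpha+\beta=1$, $\alpha p_0+\beta p_1=p$, where $T_q:=\tr((\fC\fC^*)^{q/2})$; by log-convexity these dominate $\E|X_1|^{2p}$ and $T_p$ (the inequalities go the wrong way, and the ratio is not bounded by a constant depending only on $p$), so the interpolated bound may strictly exceed the asserted right-hand side at intermediate $p$. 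No interpolation is needed: run the induction directly on real exponents, i.e.\ prove the statement for $p\in[2,4]$ by applying your $[1,2]$ base case (the second-moment identity of Lemma \ref{xabx} plus Jensen) to the inner quadratic forms at exponent $p/2$, and then deduce the claim at any real $p>4$ from the already established claim at $p/2$. All of your estimates are written for a general exponent, so this rewording costs nothing and closes the gap.
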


\begin{lem}((5.1.16) of \cite{Bai2010b})
	Suppose that the entries of the matrix $\mathbf{W}_n=\dfrac{1}{\sqrt{n}}\left(X_{ij}\right)$ are independent (but depend on $n$) and satisfy \\
	\noindent (1) $\mbox{\em E}X_{jk}=0$,\\
	(2) $\textrm{E}|X_{jk}|^2\leq \sigma^2,$\\
	(3) $\mbox{\em E}|X_{jk}|^l\leq b(\delta_n\sqrt{n})^{l-3}$ for all $l\geq 3$,\\
	where $\delta_n\to 0$ and $b>0$. Then, for fixed $\epsilon>0$ and $x>0$,
	\begin{eqnarray*}
		\mbox{\em P}\big(\lambda_{\mbox{max}}(\mathbf{W}_n)\geq 2\sigma+\epsilon+x\big)\leq o(n^{-l}(2\sigma+\epsilon+x)^{-2}).
	\end{eqnarray*}
	\label{outside}
\end{lem}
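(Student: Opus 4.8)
This is inequality (5.1.16) of \cite{Bai2010b} (a Bai--Yin type bound on the largest eigenvalue); the plan is to sketch a self-contained proof by the moment, or trace, method. Since $\fW_n$ is Hermitian, $\lambda_{\max}(\fW_n)\le\|\fW_n\|$ and $\|\fW_n\|^{2k}\le\tr(\fW_n^{2k})$ for every positive integer $k$, so Markov's inequality gives, for an integer $k=k_n$ to be chosen later,
\[
P\big(\lambda_{\max}(\fW_n)\ge 2\sigma+\epsilon+x\big)\le\frac{\E\,\tr(\fW_n^{2k})}{(2\sigma+\epsilon+x)^{2k}}.
\]
The first step is to expand the trace over closed walks,
\[
\E\,\tr(\fW_n^{2k})=\frac{1}{n^{k}}\sum_{i_1,\dots,i_{2k}=1}^{n}\E\big(X_{i_1i_2}X_{i_2i_3}\cdots X_{i_{2k}i_1}\big),
\]
and to group the summands by the isomorphism class of the closed walk $i_1\to i_2\to\cdots\to i_{2k}\to i_1$ on the complete graph on $\{1,\dots,n\}$. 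Because the entries are centered and independent, a nonzero term forces every edge of the walk to be traversed at least twice, so its underlying graph has at most $k$ edges and at most $k+1$ vertices.

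The second step isolates the dominant (``canonical'') classes, whose graph is a tree with $k$ edges and $k+1$ vertices and every edge traversed exactly twice: there are $C_k$ of them (the $k$-th Catalan number), each contributing at most $n^{k+1}\sigma^{2k}/n^{k}=n\sigma^{2k}$, hence together at most $nC_k\sigma^{2k}\le n(2\sigma)^{2k}$. The remaining classes --- those with fewer than $k+1$ vertices, or with some edge traversed three or more times --- are the crux: invoking hypothesis (3), every excess edge multiplicity above two and every vertex coincidence contributes a factor of order $\mathrm{poly}(k)\,\delta_n$, and summing the resulting geometric-type series over the total defect, together with the standard count of how many classes carry a prescribed defect, gives
\[
\E\,\tr(\fW_n^{2k})\le n\,(2\sigma+\epsilon/2)^{2k}
\]
for all large $n$, provided $k=k_n\to\infty$ slowly enough that some fixed power of $k_n$ times $\delta_n$ tends to $0$.

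The last step is bookkeeping. Using $(2\sigma+\epsilon+x)^{2k}=(2\sigma+\epsilon)^{2k}\big(1+\tfrac{x}{2\sigma+\epsilon}\big)^{2k}\ge(2\sigma+\epsilon)^{2k-2}(2\sigma+\epsilon+x)^{2}$, we obtain
\[
P\big(\lambda_{\max}(\fW_n)\ge 2\sigma+\epsilon+x\big)\le(2\sigma+\epsilon)^{2}\Big(\frac{2\sigma+\epsilon/2}{2\sigma+\epsilon}\Big)^{2k_n}\frac{n}{(2\sigma+\epsilon+x)^{2}}.
\]
Since $2\sigma+\epsilon/2<2\sigma+\epsilon$, choosing $k_n=\lceil M\log n\rceil$ with $M$ large enough (depending on $l,\sigma,\epsilon$) makes $n\big(\tfrac{2\sigma+\epsilon/2}{2\sigma+\epsilon}\big)^{2k_n}=o(n^{-l})$, which is the asserted estimate. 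I expect the main obstacle to be the second step --- showing that hypothesis (3) absorbs all the polynomial-in-$k_n$ losses from the non-canonical walks, so that the error graphs push the effective edge of the spectrum out from $2\sigma$ only as far as $2\sigma+\epsilon/2$, and calibrating the growth of $k_n$ against $\delta_n$. This is precisely the delicate Bai--Yin combinatorial argument.
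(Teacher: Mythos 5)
This lemma is quoted verbatim from (5.1.16) of Bai and Silverstein \cite{Bai2010b} and the paper supplies no proof of its own, so your trace--moment sketch is essentially the same route as the cited source: Markov's inequality applied to $\E\tr(\fW_n^{2k})$, the Bai--Yin walk/graph counting under hypothesis (3) to get $\E\tr(\fW_n^{2k})\le n(2\sigma+\epsilon/2)^{2k}$ for $k_n$ of order $\log n$, and the factorization $(2\sigma+\epsilon+x)^{2k}\ge(2\sigma+\epsilon)^{2k-2}(2\sigma+\epsilon+x)^{2}$, which keeps the estimate uniform in $x>0$ (as needed when the bound is integrated in Lemma \ref{tail}). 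The combinatorial core is asserted rather than carried out, and the per-defect gain is really of order $\mathrm{poly}(k)\,n^{-1/2}$ or $\mathrm{poly}(k)\,n^{-1}$ (the truncation level $\delta_n\sqrt n$ entering only to tame edges of high multiplicity), but this does not change the soundness of the approach or of the final bookkeeping.
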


\begin{lem}(Theorem 8.6 in \cite{Bai2010b})
	Under assumptions in Theorem \ref{thm1}, we have
	\[
	\Delta=\|\textrm{E}F^{W_n}-F\|=O(n^{-1/2}).
	\]
	\label{Delta1}
\end{lem}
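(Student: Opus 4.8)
Lemma \ref{Delta1} is \cite[Theorem 8.6]{Bai2010b}, so I would reproduce its proof; it runs along the same lines as the proof of Theorem \ref{thm1} in Section \ref{proof1}, with the test vector $\fx_n$ replaced throughout by $n^{-1/2}$ times the identity, and is in fact easier. \emph{Stage 1 (truncation, centralization, rescaling):} exactly as in Section \ref{truncation}, replace the $X_{ij}$ by truncated, re-centered and re-normalized variables $\wh X_{ij}$ with $|\wh X_{ij}|\le\ep_n n^{1/4}$, $\E\wh X_{ij}=0$, $\var(\wh X_{ij})=1$ and $\E|\wh X_{ij}|^{8}\le C$, and let $\wh{\fW}_n$ be the resulting matrix. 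Off an event of probability $o(n^{-t})$ the matrix $\fW_n-\wh{\fW}_n$ has rank $o(n)$ and small spectral norm, so the rank inequality $\|F^{\fW_n}-F^{\wh{\fW}_n}\|\le n^{-1}\mathrm{rank}(\fW_n-\wh{\fW}_n)$ together with the eighth-moment bound gives $\|\E F^{\fW_n}-\E F^{\wh{\fW}_n}\|=o(n^{-1/2})$; from now on we work with the truncated model and write $m_n(z):=\E s_n(z)$.

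\emph{Stage 2 (an approximate quadratic equation for $m_n$).} With $\fA(z)=\fW_n-z\indic$ and $\fW_n=\fW_{nk}+\fw_k\fe_k^*+\fe_k\fw_k^*$, the Sherman--Morrison identities \eqref{b1} combined with \eqref{ekA} and \eqref{al_12} give $z s_n(z)+1=\tfrac1n\sum_k\fe_k^*\fA^{-1}(z)\fw_k=\tfrac1n\sum_k(1-\alpha_k)$, hence $m_n(z)=-\tfrac1{nz}\sum_k\E\alpha_k$. Substituting twice into $\alpha_k=\gamma_k-z^{-1}\alpha_k\gamma_k\xi_k$ one gets $\alpha_k=\gamma_k-z^{-1}\gamma_k^2\xi_k+z^{-2}\alpha_k\gamma_k^2\xi_k^2$, and since $\gamma_k$ is $\sigma(\fW_{nk})$-measurable while $\E(\xi_k\mid\fW_{nk})=0$ (because $\E(\fw_k\fw_k^*\mid\fW_{nk})=\tfrac1n(\indic-\fe_k\fe_k^*)$), the term linear in $\xi_k$ vanishes under $\E$. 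Replacing then $\gamma_k$ by $b_n=(1+z^{-1}m_n(z))^{-1}$ as in the proof of Lemma \ref{agb} produces
\[
m_n(z)^2+z\,m_n(z)+1=r_n(z),\qquad z\in\cD ,
\]
where $r_n$ collects the second-order term $\tfrac1{nz}\sum_k z^{-2}\E(\alpha_k\gamma_k^2\xi_k^2)$, the variance $\var(s_n(z))$, and the deterministic corrections $\tr(\fA_k^{-1}-\fA^{-1})$.

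\emph{Stage 3 (the main obstacle: estimating $r_n$ and inverting).} The gain from killing the linear term is that $r_n$ is controlled by $\E|\xi_k|^2$, not $\E|\xi_k|$: by Lemma \ref{xi} with $l=1$ (equivalently, from Lemma \ref{xtrx} with $p=2$, the bounded fourth moment, and $\tr(\fA_k^{-1}\fA_k^{-*})=v^{-1}\Im\tr\fA_k^{-1}$) one has $\E|\xi_k|^2\le\tfrac{C}{nv}(1+\Delta/v)$, using $\Im m_n(z)\le|m_n(z)|\le 1+\pi\Delta/v$. Combining this with the bounds $|z^{-1}\alpha_k|,|z^{-1}\gamma_k|\le C$ with probability $1-o(n^{-t})$ (Lemma \ref{agb}), the fact that $|z|\le C$ on $\cD$ makes $z^{-2}\alpha_k\gamma_k^2$ bounded, the martingale bound $\var(s_n(z))\le C(nv)^{-2}$ (Lemma \ref{6.7}), and $|\tr(\fA_k^{-1}-\fA^{-1})|\le v^{-1}$, one obtains $|r_n(z)|\le\tfrac{C}{nv}(1+\Delta/v)$ on $\cD$. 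Solving the quadratic and comparing with $s(z)^2+z s(z)+1=0$ gives $m_n(z)-s(z)=2r_n(z)\big(\sqrt{z^2-4+4r_n(z)}+\sqrt{z^2-4}\big)^{-1}$, whence $|m_n(z)-s(z)|\le C\min\!\big(|r_n(z)|\,|z^2-4|^{-1/2},\,|r_n(z)|^{1/2}\big)$; the second alternative is needed only in the windows $|u\mp2|\lesssim|r_n(z)|+v$ around the spectral edges, and since $\int_{-16}^{16}|u^2-4|^{-1/2}\,du<\infty$ while those windows have width $O(v)$ at $v=C_0 n^{-1/2}$, integration yields $\int_{-16}^{16}|m_n(z)-s(z)|\,du\le\tfrac{C}{nv}(1+\Delta/v)$.

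\emph{Stage 4 (Berry--Esseen and optimization).} Apply Lemma \ref{Bai_lem} with $F\leftarrow\E F^{\fW_n}$, $G\leftarrow F$, $A=16$, $B=3$: the tail $\int_{|x|>3}|\E F^{\fW_n}(x)-F(x)|\,dx$ is $o(n^{-t})$ by the large-deviation bound for $\lambda_{\max}(\fW_n)$ (Lemma \ref{outside}), and the smoothness term is $O(v)$ since $F'$ is bounded; hence $\Delta\le\tfrac{C}{nv}(1+\Delta/v)+Cv$. Taking $v=C_0 n^{-1/2}$ with $C_0$ so large that $C/(nv^2)\le\tfrac23$, the inequality is self-improving and gives $\Delta\le 3C/(nv)+3Cv=O(n^{-1/2})$. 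The points that require care are the cancellation in Stages 2--3 that produces the rate $1/(nv)$ rather than the naive $1/(nv^2)$, and the book-keeping near $u=\pm2$ where $|z^2-4|^{-1/2}$ is large but still integrable in $u$; the factor $1+\Delta/v$ on the right-hand side is harmless because $C/(nv^2)$ is made $<1$ by the choice of $v$. (The optimal $O(n^{-1})$ of \cite{GT2016} would require keeping further cumulant corrections in $r_n$, which is not needed here.)
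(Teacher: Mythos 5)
The paper does not actually prove Lemma \ref{Delta1}: it is imported verbatim as Theorem 8.6 of \cite{Bai2010b} and used as an external ingredient (only its statement enters, e.g.\ through Lemma \ref{xi} and the bounds on $\E|\xi_k|^{2l}$). So there is no internal proof to compare against; what you have done is reconstruct the textbook argument. Your reconstruction is essentially the classical Bai-type proof and it is sound: the identity $zs_n(z)+1=\tfrac1n\sum_k(1-\alpha_k)$, the cancellation of the term linear in $\xi_k$ because $\gamma_k$ is $\sigma(\fW_{nk})$-measurable and $\E(\xi_k\mid\fW_{nk})=0$, the resulting approximate quadratic equation for $\E s_n$, the dichotomy $|m_n-s|\le C\min\bigl(|r_n|\,|z^2-4|^{-1/2},|r_n|^{1/2}\bigr)$ with the edge windows, and the self-improving Berry--Esseen step at $v=C_0n^{-1/2}$ are exactly the skeleton of the cited proof, and indeed the same template the paper adapts for $\De^H$ in Section \ref{proof1} (with $\fx_n\fx_n^*$ in place of $n^{-1}\indic$). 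You also correctly identified the two points that actually produce the rate: the fluctuation of $s_n$ must enter $r_n$ at second order (as a variance, after centering $\beta_n$ at $b_n$), since a first-order Cauchy--Schwarz on $s_n-\E s_n$ would only give $O(n^{-2/5})$ after optimizing $v$; and the $\xi_k$ contribution must be controlled through $\E|\xi_k|^2\le C(nv)^{-1}(1+\De/v)$ rather than $\E|\xi_k|$. One small correction: Lemma \ref{6.7} gives $\E|s_n-\E s_n|^{2}\le Cn^{-2}v^{-4}(\De+v)$, not $C(nv)^{-2}$; this is harmless, since on $\cD$ (where $nv^2\ge C_0^2$) it is still $O\bigl((nv)^{-1}(1+\De/v)\bigr)$ and the bound $|r_n|\le C(nv)^{-1}(1+\De/v)$ survives. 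Likewise, on the exceptional event where $|z^{-1}\al_k|$ is not bounded one should note the crude bound $|\al_k|\le|z|/v$, so that the $o(n^{-t})$ probability kills that contribution; this is the same bookkeeping the paper uses via Lemma \ref{agb}. With these touches your outline reproduces the cited result under even weaker moment assumptions than those of Theorem \ref{thm1}, so it certainly covers the lemma as stated.
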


\begin{lem}(Lemma 8.20 in \cite{Bai2010b})
	If $|z|<C$, $v\geq O(n^{-1/2})$ and $l \geq 1$, then
	\begin{eqnarray*}
		\textrm{E}|s_n(z)-\textrm{E}s_n(z)|^{2l}\le Cn^{-2l}v^{-4l}(\Delta+v)^l,
	\end{eqnarray*}
	where $\Delta=\|\textrm{E}F^{W_n}-F\|$.
	\label{6.7}
\end{lem}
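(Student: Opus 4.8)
This is Lemma 8.20 of \cite{Bai2010b}; I would prove it by the classical martingale-difference method for fluctuations of Stieltjes transforms of Wigner matrices, in the same spirit as the proof of Lemma \ref{mem}. \emph{Martingale decomposition.} Write $s_n(z)-\E s_n(z)=\sum_{k=1}^n(\E_{k-1}-\E_k)\,\tfrac1n\tr\fA^{-1}(z)$. Since $\tr\fA_k^{-1}(z)$ does not involve the $k$th row or column of $\fW_n$, it is annihilated by $\E_{k-1}-\E_k$, so
\[
s_n(z)-\E s_n(z)=\frac1n\sum_{k=1}^n\delta_k,\qquad \delta_k:=(\E_{k-1}-\E_k)\bigl(\tr\fA^{-1}(z)-\tr\fA_k^{-1}(z)\bigr),
\]
and $\{\delta_k\}$ is a martingale difference sequence, to which Lemma \ref{Burkholder}(b) is applied with $p=2l$.

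\emph{The summands.} By the Schur complement (equivalently, by \eqref{Aek} and \eqref{Ak2}), $\beta_k:=\fe_k^*\fA^{-1}(z)\fe_k=\bigl(-z-\fw_k^*\fA_k^{-1}(z)\fw_k\bigr)^{-1}$ with $|\beta_k|\le v^{-1}$, and $\tr\fA^{-1}(z)-\tr\fA_k^{-1}(z)=\beta_k\bigl(1+\fw_k^*\fA_k^{-2}(z)\fw_k\bigr)$ modulo a deterministic additive constant. Subtracting the $\fA_k$-measurable surrogate obtained by replacing $\fw_k^*\fA_k^{-j}(z)\fw_k$ with $\tfrac1n\tr\fA_k^{-j}(z)$ ($j=1,2$), and using that $\E_{k-1}-\E_k$ kills $\fA_k$-measurable quantities, $\delta_k$ becomes a finite combination of products of the centred quadratic forms $\xi_k^{(j)}:=\fw_k^*\fA_k^{-j}(z)\fw_k-\tfrac1n\tr\fA_k^{-j}(z)$ with multipliers built from $\beta_k$, $\bar\beta_k:=(-z-\tfrac1n\tr\fA_k^{-1}(z))^{-1}$ and $\tfrac1n\tr\fA_k^{-2}(z)$.

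\emph{Estimates.} Lemma \ref{xi} bounds the moments of $\xi_k^{(1)}$; the same argument with $\fA_k^{-2}(z)$ in place of $\fA_k^{-1}(z)$ gives $\E_{k-1}|\xi_k^{(2)}|^{2l}\le\tfrac{C}{n^lv^{3l}}(\Im s_{n,k}(z))^l$ up to lower-order terms, where $s_{n,k}(z):=\tfrac1n\tr\fA_k^{-1}(z)$, using $\tr\bigl(\fA_k^{-2}(z)\fA_k^{-2}(\bar z)\bigr)\le v^{-2}\tr\bigl(\fA_k^{-1}(z)\fA_k^{-1}(\bar z)\bigr)=v^{-3}n\,\Im s_{n,k}(z)$. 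The extra ingredient — obtained by the argument of Lemma \ref{agb} but invoking only Lemma \ref{Delta1} (so that $\Delta/v\le C/C_0$ is as small as one wishes) together with Lemma \ref{xi}, not the present lemma — is that $|\beta_k|$, $|\bar\beta_k|$, $|\tfrac1n\tr\fA_k^{-2}(z)|$ and $\Im s_{n,k}(z)$ are all $O(1)$ off an event of probability $o(n^{-t})$; on the exceptional event one retains the deterministic $v^{-1}$ bounds, and since those are polynomially bounded the contribution is $o(n^{-t})$ and negligible.

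\emph{Assembling, and the obstacle.} On the good event the two previous paragraphs give $\E_{k-1}|\delta_k|^2\le\tfrac{C}{nv^3}\Im s_{n,k}(z)$ up to lower-order terms, so $\tfrac1n\sum_k\E_{k-1}|\delta_k|^2\le\tfrac{C}{nv^3}\Im s_n(z)$; combined with $\E(\Im s_n(z))^l\le C(1+\Delta/v)^l$ (from $\E\Im s_n(z)=\Im\E s_n(z)\le\Im s(z)+\pi\Delta/v$ and a short induction as in the proof of Lemma \ref{mem}), the first term of Lemma \ref{Burkholder}(b) is $\le Cn^{-2l}v^{-3l}(1+\Delta/v)^l=Cn^{-2l}v^{-4l}(\Delta+v)^l$. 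The second term $\tfrac1{n^{2l}}\sum_k\E|\delta_k|^{2l}$ is handled with $|\delta_k|\le 2v^{-1}$, giving $Cn^{1-2l}v^{-2l}$, which is dominated once one notes $nv^{2l}\le C(\Delta+v)^l$ for $l\ge2$ (using $v\ge C_0n^{-1/2}$), while $l=1$ is covered directly by $\E|\delta_k|^2\le\tfrac{C}{nv^3}\Im s_{n,k}(z)$. The main obstacle is not any single inequality but keeping track of the powers of $n$, $v$ and $\Delta$: the crude bounds $|\beta_k|,|\tfrac1n\tr\fA_k^{-2}|\lesssim v^{-1}$ must be upgraded to $O(1)$ off negligible events, the self-normalised term in Lemma \ref{xtrx} forces the truncation scale $\ep_n n^{1/4}$ to enter and must be checked not to spoil the bound, and the sharp factor $(\Delta+v)^l$ — rather than $v^{-l}$ — emerges only because $\Im s_n(z)$, whose mean is controlled by $\Delta$, is propagated through the whole computation.
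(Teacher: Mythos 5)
The paper itself gives no proof of this statement: it is quoted verbatim as Lemma 8.20 of Bai and Silverstein (2010), so your sketch has to be measured against the standard argument from that book. Your skeleton (martingale decomposition of $\tr\fA^{-1}(z)-\tr\fA_k^{-1}(z)$, Burkholder (b), quadratic-form moment bounds, and propagating $\Im s_n$ to produce the factor $(\De+v)^l$) is the right one, but the step you yourself flag as "the extra ingredient" is where the argument has a genuine gap. You propose to show that $|\beta_k|$, $|\bar\beta_k|$, $n^{-1}|\tr\fA_k^{-2}(z)|$ and $\Im s_{n,k}(z)$ are $O(1)$ off events of probability $o(n^{-t})$ "by the argument of Lemma \ref{agb} but invoking only Lemma \ref{Delta1} and Lemma \ref{xi}, not the present lemma". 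That is circular as stated: in Lemma \ref{agb} the only probabilistic input controlling the fluctuation of $n^{-1}\tr\fA^{-1}(z)$ is precisely the present lemma (it is used to bound $P(|s_n(z)-\E s_n(z)|\geq 1/2)$), while Lemma \ref{Delta1} controls only $\E s_n(z)$ and Lemma \ref{xi} only the quadratic form around $n^{-1}(\tr\fA_k^{-1}(z)+z^{-1})$; a crude substitute (Azuma or Burkholder with the deterministic bound $|\delta_k|\le Cv^{-1}$) yields only $P\le C(nv^2)^{-l}$, which at $v\asymp n^{-1/2}$ is $O(1)$, not $o(n^{-t})$. In addition, two of the claimed good-event bounds are false near the spectral edge for $v$ down to $n^{-1/2}$: $n^{-1}|\tr\fA_k^{-2}(z)|$ is only bounded by $v^{-1}\Im s_{n,k}(z)$, which need not be $O(1)$ there. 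The book's proof (and the natural repair of yours) avoids high-probability events altogether: center the coefficient at the deterministic $b_n=\(-z-\E s_n(z)\)^{-1}$, which is $O(1)$ deterministically (the fact quoted in Lemma \ref{agb} from (8.1.13) and (8.1.18) of the book), use the deterministic pairing $|\beta_k(1+\fw_k^*\fA_k^{-2}(z)\fw_k)|\le v^{-1}$, and absorb the resulting terms involving $s_n-\E s_n$ into the left-hand side, since their coefficient is $O((nv^2)^{-l})$ and hence smaller than $1/2$ for $C_0$ large.

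A second, smaller but real, error is the treatment of the second Burkholder term: dominating $n^{-2l}\sum_k\E|\delta_k|^{2l}$ via $|\delta_k|\le 2v^{-1}$ requires $nv^{2l}\le C(\De+v)^l$, which fails for $v$ of order one (take $v=1$: the left side is $n$, the right side is bounded), and the lemma must hold uniformly for $C_0n^{-1/2}\le v\le C$. Instead bound $\E|\delta_k|^{2l}$ through the $2l$-th moments of the centred quadratic forms (Lemma \ref{xtrx}, as in Lemma \ref{xi}), which gives a contribution of order $n^{1-2l}\cdot n^{-l}v^{-3l}(1+\De/v)^l$, comfortably dominated by the target $n^{-2l}v^{-4l}(\De+v)^l$. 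With these two repairs your outline does reproduce the quoted bound; as written, however, the key non-circular control of the coefficients is missing.
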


\begin{lem}(Lemma 9.1 in \cite{Bai2010b})
	Suppose that $X_i$, $i=1,\cdots, n$, are independent, with $\mbox{\em E}X_i=0$, $\mbox{\em E}|X_i|^2=1$, $\sup\mbox{\em E}|X_i|^4=\nu < \infty$
	and $|X_i|\leq \eta \sqrt{n}$ with $\eta>0$. Assume that $\mathbf{A}$ is a complex matrix. Then for any given $p$ such that $2\leq p \leq b \log(n\nu^{-1}\eta^4 )$ and
	$b>1$, we have
	\begin{eqnarray*}
		\mbox{\em E}|\mathbf{\alpha}^* \mathbf{A} \mathbf{\alpha} -\mbox{\em tr} (\mathbf{A})|^p \leq \nu n^p(n\eta^4)^{-1} (40b^2\|\mathbf{A}\|\eta^2)^p,
	\end{eqnarray*}
	where $\mathbf{\alpha}=(X_1,\cdots, X_n)^T$.
	\label{6.8}
\end{lem}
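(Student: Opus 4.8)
The plan is to split $\mathbf{\alpha}^*\mathbf A\mathbf{\alpha}-\tr\mathbf A$ into its diagonal and off-diagonal contributions and to estimate each by a martingale argument in which the truncation bound $|X_i|\le\eta\sqrt n$ is used at every step to keep the higher moments under control. Write
\[
\mathbf{\alpha}^*\mathbf A\mathbf{\alpha}-\tr\mathbf A=\sum_{i=1}^n a_{ii}\bigl(|X_i|^2-1\bigr)+\sum_{i\ne j}\bar X_i a_{ij}X_j=:D_n+R_n ,
\]
so that by the $c_p$-inequality it suffices to bound $\E|D_n|^p$ and $\E|R_n|^p$ each by a constant multiple of the right-hand side. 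Two elementary facts will be used repeatedly: $\sum_i|a_{ii}|^2\le\sum_{i,j}|a_{ij}|^2=\tr(\mathbf A\mathbf A^*)\le n\|\mathbf A\|^2$ and $\tr\bigl((\mathbf A\mathbf A^*)^{p/2}\bigr)\le n\|\mathbf A\|^{p}$ (all $n$ eigenvalues of $\mathbf A\mathbf A^*$ being at most $\|\mathbf A\|^2$), together with the truncated-moment estimate $\E|X_i|^{q}\le(\eta\sqrt n)^{q-4}\E|X_i|^4\le\nu(\eta^2 n)^{(q-4)/2}$ for $q\ge4$.

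For the diagonal part, $D_n=\sum_i Z_i$ is a sum of independent, mean-zero variables $Z_i:=a_{ii}(|X_i|^2-1)$ with $|Z_i|\le|a_{ii}|\eta^2 n$ and $\sum_i\E Z_i^2\le\nu\sum_i|a_{ii}|^2\le\nu n\|\mathbf A\|^2$. Applying Lemma~\ref{Burkholder}(b) with the filtration generated by $X_1,\dots,X_k$ — whose predictable quadratic variation here is the deterministic quantity $\sum_i\E Z_i^2$ — reduces matters to estimating $\sum_i\E|Z_i|^p\le(\eta^2 n\|\mathbf A\|)^{p-2}\sum_i|a_{ii}|^2\,\E\bigl||X_i|^2-1\bigr|^2\le C\nu\,(\eta^2 n\|\mathbf A\|)^{p-2}\,n\|\mathbf A\|^2$, which is of exactly the shape of the target. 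The restriction of $p$ to the stated logarithmic range (with $b>1$) is precisely what makes this single-jump contribution dominate the variance contribution, and what lets the accompanying $p$-dependent constant be absorbed into $(40b^2)^p$.

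For the off-diagonal part, order the coordinates and write $R_n=\sum_{k=2}^{n}Y_k$ with $Y_k:=\bar X_k\sum_{j<k}a_{kj}X_j+X_k\sum_{i<k}\bar X_i a_{ik}$, which is a martingale difference for the same filtration because $\E X_k=0$. A second application of Lemma~\ref{Burkholder}(b) leaves two quantities. The single-jump sum $\sum_k\E|Y_k|^p$ is handled directly, bounding $\E|Y_k|^p\le C\,\E|X_k|^p\,\E\bigl|\sum_{j<k}a_{kj}X_j\bigr|^p$, then treating the linear form by Lemma~\ref{Burkholder}(b) again (conditionally it is a sum of independent terms), and finally using $\sum_j|a_{kj}|^2\le n\|\mathbf A\|^2$ together with the truncated-moment bound on $\E|X_1|^p$. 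The conditional-variance term is the delicate one: from $\E|X_k|^2=1$ and $\E X_k^2=O(1)$ one gets $\E_{k-1}|Y_k|^2\le C\bigl(|\sum_{j<k}a_{kj}X_j|^2+|\sum_{i<k}\bar X_i a_{ik}|^2\bigr)$, so $\sum_k\E_{k-1}|Y_k|^2$ is, up to a constant, again a quadratic form $\mathbf{\alpha}^*\mathbf M\mathbf{\alpha}$ built from the triangular truncations of $\mathbf A$, with $\E(\mathbf{\alpha}^*\mathbf M\mathbf{\alpha})\le C\,\tr(\mathbf A\mathbf A^*)\le Cn\|\mathbf A\|^2$; one then bounds $\E(\mathbf{\alpha}^*\mathbf M\mathbf{\alpha})^{p/2}$ by re-entering the same estimate (an induction on $p$, the base case $p=2$ being the exact variance identity of Lemma~\ref{xabx}), which closes the argument.

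I expect the crux to be exactly this off-diagonal conditional-variance step: $\sum_k\E_{k-1}|Y_k|^2$ is not deterministic, so controlling its $p/2$-th moment forces either an induction on $p$ or a recursion through the very inequality being proved, and in both cases one must track carefully how the constants compound and how the truncation level $\eta\sqrt n$ enters — it is this bookkeeping that produces the explicit factor $40b^2$ and that confines $p$ to the logarithmic window $2\le p\le b\log(n\nu^{-1}\eta^{4})$, since the Bernstein-type mechanism that avoids a $p^{p}$ loss is only available in that range. The diagonal step and the single-jump estimates, by contrast, are routine once the truncated moments are in hand.
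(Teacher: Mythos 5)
You should first note that the paper does not prove this statement at all: it is quoted verbatim as Lemma 9.1 of \cite{Bai2010b}, so there is no internal proof to compare with, and your attempt must be judged as a reconstruction of that textbook result. Your skeleton (diagonal/off-diagonal split, martingale differences with respect to $\sigma(X_1,\dots,X_k)$, a Rosenthal--Burkholder bound separating a conditional-variance term from a single-jump term of size $\nu n^{p-1}\eta^{2p-4}\|\mathbf A\|^p$, and a recursion in $p$ for the conditional variance) is the standard and essentially correct architecture for such quadratic-form moment bounds, and your single-jump computations do produce the right order of magnitude.

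The genuine gap is that the part you defer is the entire content of the lemma. With constants left implicit, your argument only re-derives what Lemma \ref{xtrx} already gives (combined with $\tr(\mathbf A\mathbf A^*)\le n\|\mathbf A\|^2$), namely $\E|\alpha^*\mathbf A\alpha-\tr\mathbf A|^p\le K_p\bigl[(\nu n)^{p/2}+\nu n^{p-1}\eta^{2p-4}\bigr]\|\mathbf A\|^p$ with an unspecified $K_p$; the added value of Lemma \ref{6.8} is the explicit factor $(40b^2)^p$, valid uniformly for $2\le p\le b\log(n\nu^{-1}\eta^4)$, which is precisely what allows $p\asymp\log n$ in Lemma \ref{agb} to yield $o(n^{-t})$ probabilities. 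Your claim that the $p$-dependent constant of Lemma \ref{Burkholder}(b) ``can be absorbed into $(40b^2)^p$'' is exactly what needs proof and is false as stated: the sharp Rosenthal/Burkholder constants grow superexponentially in $p$ (of order $(cp/\log p)^p$, resp.\ $p^{p/2}$ for the square-function form), so for fixed $b$ and $p\asymp\log(n\nu^{-1}\eta^4)\to\infty$ they are not $O(C^p)$ and cannot be hidden in $40b^2$. The actual mechanism is a Bennett/Bernstein-type bookkeeping in which the contribution of ``$k$ large entries'' carries a factor of order $k^p/\bigl(k!\,N^{k-1}\bigr)$ with $N=n\nu^{-1}\eta^4\ge e^{p/b}$, and maximizing $k e^{-(k-1)/b}$ over $k$ (giving roughly $b$) is what produces a constant of the form $Cb^2$ per power of $p$; you gesture at this but give no argument, and nothing in your use of Lemma \ref{Burkholder}(b) supplies it. A second, smaller gap is in the recursion for $\sum_k\E_{k-1}|Y_k|^2$: this equals $\alpha^*\mathbf M\alpha$ with $\mathbf M$ built from the triangular truncation $\mathbf B$ of $\mathbf A$, and re-entering the lemma at exponent $p/2$ requires control of $\|\mathbf M\|=\|\mathbf B\|^2$; triangular truncation is not an operator-norm contraction (it can lose a factor of order $\log n$), so the induction does not close as written and must instead be run through trace quantities such as $\tr(\mathbf M\mathbf M^*)$. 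As a sketch of the qualitative bound your plan is fine; as a proof of the stated quantitative inequality it is incomplete exactly at its crux.
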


\section{Truncation Part}\label{truncation}
We shall prove that the underlying random variables satisfy $X_{ii}=0$ and for
$i\neq j$, $|X_{ij}|\le \epsilon_nn^{1/4}$, $\textrm{E}X_{ij}=0$ and
$\var(X_{ij})=1$, where $\epsilon_n$ is a sequence decreasing to 0 and
$\epsilon_nn^{1/4}$ increasing to infinity.
\par
\subsection{Truncation}
Under finite 10-th moment condition, we can choose $\epsilon_n$ decreasing to 0 and $\epsilon_nn^{1/4}$ increasing to infinity as $n\rightarrow\infty$, such that
\[
\epsilon_n^{-10}\textrm{E}|X_{12}|^{10}\textrm{I}\left(|X_{12}|>\epsilon_nn^{1/4}\right)\to 0.
\]
Let $\widehat{\mathbf{X}}_n$ denote the truncated matrix whose entry on the $i$-th row and $j$-th column is $X_{ij}\textrm{I}\left(|X_{ij}|\le\epsilon_nn^{1/4}\right)$, $i=1,\cdots,n$, $j=1,\cdots,n$. Define $\widehat{\mathbf{W}}_n=\dfrac{1}{\sqrt{n}}\widehat{\mathbf{X}}_n$.
\par
\textbf{Step 1(a). Truncation for Theorem {\ref{thm1}}.}
\begin{eqnarray*}
	P\left(\mathbf{W}_n\neq \widehat{\mathbf{W}}_n\right)&\le&n^2P\left(|X_{ij}|>\epsilon_nn^{1/4}\right)\\
	&\le&n^2\epsilon_n^{-10}n^{-5/2}\textrm{E}|X_{12}|^{10}\textrm{I}\left(|X_{12}|>\epsilon_nn^{1/4}\right)\\
	&=&o(n^{-1/2}).
\end{eqnarray*}
\textbf{Step 1(b). Truncation for Theorem \ref{thm2} and Theorem \ref{thm3}.}
\begin{eqnarray*}
	P\left(\mathbf{W}_n\neq\widehat{\mathbf{W}}_n, i.o.\right)&=&\lim_{k\to\infty}
	P\left(\cup_{n=k}^\infty \cup_{i=1}^n \cup_{j=1}^n |X_{ij}|>\epsilon_nn^{1/4}\right)\\
	&=&\lim_{k\to\infty}P\left(\cup_{t=k}^\infty \cup_{n\in[2^t,2^{t+1})} \cup_{i=1}^n \cup_{j=1}^n|X_{ij}|>\epsilon_nn^{1/4}\right)\\
	&\le&\lim_{k\to\infty}\sum_{t=k}^\infty P\left(\cup_{i=1}^{2^{t+1}} \cup_{j=1}^{2^{t+1}}|X_{ij}|>\epsilon_{2^t}2^{t/4}\right)\\
	&\le&\lim_{k\to\infty}\sum_{t=k}^\infty (2^{t+1})^2 P\left(|X_{12}|>\epsilon_{2^t} 2^{t/4}\right)\\
	&\le&\lim_{k\to\infty}\sum_{t=k}^\infty \sum_{l=t}^\infty 4^t P\left(\epsilon_{2^l}2^{l/4}<|X_{12}|\le\epsilon_{2^{l+1}}2^{(l+1)/4}\right)\\
	&=&\lim_{k\to\infty}\sum_{l=k}^\infty \sum_{t=k}^l 4^t P\left(\epsilon_{2^l}2^{l/4}<|X_{12}|\le\epsilon_{2^{l+1}}2^{(l+1)/4}\right)\\
	&\le&\lim_{k\to\infty}\sum_{l=k}^\infty\epsilon_{2^l}^{-8}\textrm{E}|X_{12}|^8\textrm{I}\left(\epsilon_{2^l}2^{l/4}<|X_{12}|\le\epsilon_{2^{l+1}}2^{(l+1)/4}\right)\\
	&=&0.
\end{eqnarray*}

\subsection{Removing diagonal entries}
Based on step 1, we can assume that $\mathbf{W}_n=\dfrac{1}{\sqrt{n}}\left(X_{ij}\textrm{I}\left(|X_{ij}|\le\epsilon_nn^{1/4}\right)\right)_{i,j=1}^n$.
Let $\mathbf{W}_n^0=\dfrac{1}{\sqrt{n}}\left(X_{ij}\textrm{I}\left(|X_{ij}|\le\epsilon_nn^{1/4}\right)\left(1-\delta_{ij}\right)\right)_{i,j=1}^n$.
Here $\delta_{ij}=1$ or $0$ according to $i=j$, or $i\neq j$.
\par
\textbf{Step 2(a). Removing diagonal entries for Theorem \ref{thm1}.}
\begin{eqnarray*}
	&&\left\|\textrm{E}\left(\mathbf{W}_n-\mathbf{W}_n^0\right)\right\|\\
	&=&\dfrac{1}{\sqrt{n}}\left\|\textrm{E}\left(\textrm{diag}\left(X_{11}\textrm{I}\left(|X_{11}|\le\epsilon_nn^{1/4}\right),\cdots,
	X_{nn}\textrm{I}\left(|X_{nn}|\le\epsilon_nn^{1/4}\right)\right)\right)\right\|\\
	&=&\dfrac{1}{\sqrt{n}}\left\|\textrm{E}\left(\textrm{diag}\left(X_{11}\textrm{I}\left(|X_{11}|>\epsilon_nn^{1/4}\right),\cdots,
	X_{nn}\textrm{I}\left(|X_{nn}|>\epsilon_nn^{1/4}\right)\right)\right)\right\|\\
	&=&\dfrac{1}{\sqrt{n}}\textrm{E}|X_{11}|\textrm{I}\left(|X_{11}|>\epsilon_nn^{1/4}\right)\\
	&=&o(n^{-1/2}).
\end{eqnarray*}

\textbf{Step 2(b). Removing diagonal entries for Theorem \ref{thm2} and Theorem \ref{thm3}.}
\begin{eqnarray*}
	\left\|\mathbf{W}_n-\mathbf{W}_n^0\right\|
	&=&\dfrac{1}{\sqrt{n}}\left\|\textrm{diag}\left(X_{11}\textrm{I}\left(|X_{11}|\le\epsilon_nn^{1/4}\right),\cdots,
	X_{nn}\textrm{I}\left(|X_{nn}|\le\epsilon_nn^{1/4}\right)\right)\right\|\\
	&=&\dfrac{1}{\sqrt{n}}\epsilon_nn^{1/4}
	=o(n^{-1/4}).
\end{eqnarray*}

\subsection{Centralization}
Suppose that $z\in\cD$ and denote
$\widehat{\mathbf{W}}_n=\dfrac{1}{\sqrt{n}}\left(\widehat{X}_{ij}\right)$, where
\[
\widehat{X}_{ij}=X_{ij}\textrm{I}\left(|X_{ij}|\le\epsilon_nn^{1/4}\right)-\textrm{E}X_{ij}\textrm{I}\left(|X_{ij}|\le\epsilon_nn^{1/4}\right).
\]
\textbf{Centralization for Theorem \ref{thm1}.}
\begin{eqnarray*}
	\textrm{E}|s_{H^{W_n}}(z)-s_{H^{\widehat{W}_n}}(z)|
	&=&\textrm{E}\left|\mathbf{x}_n^*\left(\mathbf{W}_n-z\mathbf{I}\right)^{-1}\mathbf{x}_n-\mathbf{x}_n^*(\widehat{\mathbf{W}}_n-z\mathbf{I})^{-1}\mathbf{x}_n\right|\\
	&\le&\dfrac{C}{\sqrt{n}}\bigg[\left|\textrm{E}X_{12}\textrm{I}\left(|X_{ij}|\le\epsilon_nn^{1/4}\right) \right|
	\left\|\mathbf{1}_n\right\| \left\|\mathbf{1}_n^{\prime}\right\|\\
	&& \big(\textrm{E}\|\mathbf{x}_n^*(\mathbf{W}_n-z\mathbf{I})^{-1}\|^2\big)^{1/2}
	\left(\textrm{E}\|(\widehat{\mathbf{W}}_n-z\mathbf{I})^{-1}\mathbf{x}_n\|^2\right)^{1/2}\bigg]\\
	&\le& C\sqrt{n}v^{-1}\left|\textrm{E}X_{12}\textrm{I}\left(|X_{ij}|>\epsilon_nn^{1/4}\right)\right|\\
	&=&o(n^{-1/2}).
\end{eqnarray*}

\textbf{Centralization for Theorem \ref{thm2} and Theorem \ref{thm3}.}
\begin{eqnarray*}
	|s_{H^{W_n}}(z)-s_{H^{\widehat{W}_n}}(z)|
	&=&\left|\mathbf{x}_n^*\left(\mathbf{W}_n-z\mathbf{I}\right)^{-1}\mathbf{x}_n-\mathbf{x}_n^*(\widehat{\mathbf{W}}_n-z\mathbf{I})^{-1}\mathbf{x}_n\right|\\
	&\le&\left\|\left(\mathbf{W}_n-z\mathbf{I}\right)^{-1}\right\|  \left\|\mathbf{W}_n-\widehat{\mathbf{W}}_n\right\|
	\left\|(\widehat{\mathbf{W}}_n-z\mathbf{I})^{-1}\right\|\\
	&\le&\dfrac{C}{v^2}\left\|\mathbf{W}_n-\widehat{\mathbf{W}}_n\right\|\\
	&=&\dfrac{C}{v^2}\dfrac{1}{\sqrt{n}}\left|\textrm{E}X_{12}\textrm{I}\left(|X_{ij}|\le\epsilon_nn^{1/4}\right) \right|
	\left\|\mathbf{1}_n\right\| \left\|\mathbf{1}_n^{\prime}\right\|\\
	&\le&C\sqrt{n}v^{-2}\left|\textrm{E}X_{12}\textrm{I}\left(|X_{ij}|>\epsilon_nn^{1/4}\right) \right| \\
	&=&o(n^{-1/4}).
\end{eqnarray*}

\subsection{Rescaling}
The rescaling procedures for the three theorems are exactly the same, and only 8-th moment is required. Thus we treat them uniformly. Write
$\mathbf{W}_n=\dfrac{1}{\sqrt{n}}\left(\widehat{X}_{ij}\right)$, where $\widehat{X}_{ij}=X_{ij}\textrm{I}\left(|X_{ij}|\le\epsilon_nn^{1/4}\right)-\textrm{E}X_{ij}\textrm{I}\left(|X_{ij}|\le\epsilon_nn^{1/4}\right)$.
And $\widetilde{\mathbf{W}}_n=\dfrac{1}{\sqrt{n}}\left(Y_{ij}\right)$, where $Y_{ij}=X_{ij}/\sigma_1$ and
$\sigma_1^2=\textrm{E}\bigg|X_{12}\textrm{I}\left(|X_{12}|\le\epsilon_nn^{1/4}\right)-\textrm{E}X_{12}\textrm{I}\left(|X_{12}|\le\epsilon_nn^{1/4}\right)\bigg|^2$.
Notice that $\sigma_1\le 1$, and $\sigma_1$ tends to 1 as $n$ goes to $\infty$.
For $z\in\cD$, we obtain
\begin{eqnarray*}
	|s_{H^{W_n}}(z)-s_{H^{\widetilde{W}_n}}(z)|&=&
	\left|x_n^*(W_n-zI)^{-1}(W_n-\widetilde{W}_n)(\widetilde{W}_n-zI)^{-1}x_n\right|\\
	&\le&\dfrac{1}{v^2}\|(1-\sigma_1^{-1})W_n\|\quad (\textrm{by lemma } \ref{outside})\\
	&\le&\dfrac{C}{v^2}(1-\sigma_1)\\
	&\le&\dfrac{C}{v^2}(1-\sigma_1^2)\\
	&\le&Cv^{-2}\textrm{E}|X_{12}|^2\textrm{I}\left(|X_{12}|>\epsilon_nn^{1/4}\right)\\
	&=&o(n^{-1/2}).
\end{eqnarray*}



\end{document}